\documentclass[oneside]{amsart}

\usepackage{style} 

\title[Transport Inequalities for Carnot Path Spaces]{Talagrand-Type Transport Inequalities for Path Spaces over  Carnot Groups}

\author{Peter K.\ Friz$^{1,2}$}
\author{Helena Kremp$^{1,2}$}
\author{Vaios Laschos$^2$}
\author{Matthias Liero$^2$}
\author{Benjamin A.\ Robinson$^3$}
\address{$^1$Technische Universtit\"at Berlin, Berlin, Germany}
\address{$^2$Weierstra{\ss}-Institut f\"ur Angewandte Analysis und Stochastik, Berlin, Germany}
\address{$^3$Department of Statistics, University of Klagenfurt, Austria}

\email{friz@math.tu-berlin.de} \email{kremp@wias-berlin.de} \email{artnoage@gmail.com} \email{liero@wias.berlin.de} \email{benjamin.robinson@aau.at}

\keywords{Carnot groups, log-Sobolev inequality, optimal transport, rough paths, Talagrand inequality, transportation inequalities}

\subjclass[2020]{39B62, 49Q22, 53C17, 60E15, 60L20}

\date{\today}

\begin{document}

\maketitle

\begin{abstract}
We consider Talagrand-type transportation inequalities for the law of Brownian motion on Carnot groups. An important example is the lift of standard Brownian motion to the Brownian rough path.  
We present a direct proof on enhanced path space, which also yields equality when restricting to adapted couplings in the transport problem. Moreover, we prove a Talagrand inequality for the heat kernel measure on Carnot groups and deduce the inequality for the law of Brownian motion on Carnot groups via a bottom-up argument.
Our study of this enhanced Wiener measure contributes to a longstanding programme to extend key properties of Wiener measure to the non-commutative setting of the enhanced Wiener measure, which is of central importance in Lyons' rough path theory.
With a non-commutative sub-Riemannian  state space,  we observe phenomena that differ from the Euclidean case. 
In particular, while a top-down projection argument recovers Talagrand's inequality on Euclidean space from the corresponding inequality on the path space, such a projection argument breaks down in the Carnot group setting.
We further study a Riemannian approximation of the Heisenberg group, in which case the failure of the top-down projection can be partially overcome. Finally, we show that the cost function used in the Talagrand inequality is a natural choice, in that it arises as a limit of discretised costs in the sense of $\Gamma$-convergence.
\end{abstract}


\tableofcontents

\section{Introduction}
\noindent
Let $\mu \in \mathcal{P} (E)$ be a Borel probability measure on a Polish space $E$. Given a measurable cost function $c\colon E \times
E \to [0, \infty]$, we say that $\mu$ satisfies Talagrand's $\mathcal{T}_2$ transport inequality with constant $\alpha > 0$, and
write
$ \mu \in \mathcal{T}_2 (E, c, \alpha)$,
if for every $\nu \in \mathcal{P} (E)$ it holds that
\[ \T_{c, 2}^2(\mu, \nu) \coloneqq \inf_{\lambda \in \Pi (\mu, \nu)}  \int_{E \times E} c^2 (x, y) 
   \hspace{0.17em} \D \lambda (x, y) \hspace{0.27em} \le \hspace{0.27em}
   \frac{2}{\alpha}  \hspace{0.17em} H (\nu \| \mu), \]
where $\Pi (\mu, \nu)$ denotes the set of couplings between $\mu$ and $\nu$
and $H (\nu \| \mu)$ is the relative entropy of $\nu$ with respect to $\mu$.
If the cost is induced by a metric $d$ on $E$, that is $c (x, y) = d (x, y)$,
the above definition reduces to the classical $2$-Wasserstein formulation,
\[ W_2^2 (\mu, \nu) \hspace{0.27em} \le \hspace{0.27em} \frac{2}{\alpha} 
   \hspace{0.17em} H (\nu \| \mu), \qquad \forall\nu \in \mathcal{P} (E) . \]

We note, without going into details, that there is an important connection to concentration of measure and the log-Sobolev inequalities by results from \cite{OtVi00}. Talagrand \cite{Ta96a} first proved a $\Tc_2$ inequality for the standard Gaussian measure on $\R^d$ with Euclidean cost. 
A $\Tc_2$ inequality for $\R^d$-valued Brownian motion with a cost given in terms of the Cameron--Martin distance first appeared in \cite{FeUe02}. Later
\cite{Le13} gave a similar proof, using the intrinsic drift from \cite{Fo86,Fo88} and Girsanov's theorem to prove the $\Tc_2$ inequality directly on Wiener space.
Alternative proofs using Girsanov's theorem also appeared in \cite{DjGuWu04} and \cite{FeUe04}.
On the other hand, 
the $\Tc_2$ inequality on Wiener space can also be derived as a consequence of the Gaussian product case.
In fact, \cite{Ta96a} already considered the infinite Gaussian product case; cf.\ \cite{riedel_transportationcost_2017} and reference therein for explicit constructions.
This so-called \emph{bottom-up} approach uses the tensorisation property of the $\Tc_2$ inequality and a truncated expansion of the Brownian motion.
As observed in \cite{Le13} and \cite{Foe21}, one can also recover Talagrand's $\Tc_2$ inequality on $\R^d$ from the $\Tc_2$ inequality on path space by considering a Brownian bridge. This gives a so-called \emph{top-down} approach to Talagrand's $\Tc_2$ inequality.
In this paper, a first study connecting aspects of optimal transport with rough analysis, 
we investigate the validity of the $\Tc_2$ inequality, as well as the bottom-up and top-down approaches, when $\R^d$ is replaced by a certain Carnot group.

The advent of rough path theory (see, e.g.\ \cite{Lyons1998, FrVi10}) has highlighted the fundamental importance of ($d$-dimensional) Brownian motion $B$ lifted to the free step-$2$ nilpotent group (over $\R^d$), which is an example of Brownian motion with values in a step-$2$ Carnot group $\mathbb{G}$. Denoted by
\[ 
\bB_t = \biggl( B_t, \; \mathrm{Anti} \Bigl( \int_0^t B_s \otimes
   \D B_s \Bigr) \biggr), 
\]
this process is also known as horizontal Brownian motion, enhanced Brownian motion, or Brownian rough path, depending on authors and context.
When $d=2$, the relevant group is nothing but the classical $(2+1)$-dimensional Heisenberg group $\mathbb{H} \cong \R^3$ with group law
\[
       \bigl( (x,y,z), (x',y',z') \bigr) \mapsto \bigl(x+x',y+y',z+z'+(xy'-x'y)/2\bigr).   
\]
Though not directly related to this work, we note that the 
 interplay of optimal transport and Heisenberg groups was pioneered in \cite{AmRi04}; see also \cite{AS2019HEFC} for recent work in the context of Carnot groups.

Let us agree on some notation. Unless otherwise stated, $\mu = \Law (B)$ denotes Wiener measure on $\Omega=C_0([0,T],\R^d)$, with Gaussian unit time marginal $\mu_1 = \mathcal{N}(0,I_d)$. Similarly, call $\bmu = \Law(\bB)$ the \emph{enhanced Wiener measure} on $$\bOmega_{\bbG} = C_0([0,T],\bbG),$$ with (non-Gaussian) unit time marginal $\bmu_1$, which we call the \emph{heat kernel measure} on $\bbG$.  Over the last 20 years, starting with \cite{LQZ02}, numerous properties of Wiener measure (including sample path regularity, Cameron--Martin shifts, Schilder's large deviations, Stroock--Varadhan support theorem) have been extended from $\mu$ to $\bmu$, with significant benefits to stochastic analysis (see, e.g.\ \cite{FrVi10, LyonsICM, FH20} and references therein). See also \cite{chiusole2026abstractwienermodelspaces} for an abstract view.
This naturally raises the question of whether Talagrand's $\Tc_2$ inequality for Gaussian measures (respectively, Wiener measure) extends to heat kernel measures (respectively, enhanced Wiener measure) on $\bbG$, and to what extent the bottom-up and top-down approaches remain valid. In this article, we provide a reasonably complete answer to these questions.

We prove the $\Tc_2$ inequality for $\bmu$ with the cost function $C_{\mathcal{H}}$ on $\bOmega_\bbG$,
$$
C_\cH(\bomega, \ol \bomega) \coloneqq
	\begin{cases}
		\|h\|_\cH, & \text{if} \; \ol \bomega = T_h \bomega, \; \text{for some} \; h \in \cH,\\
        + \infty, & \text{otherwise},
        \end{cases}
$$
where $\cH$ is the Cameron--Martin space of $\mu$ and $T_h$ is (essentially\footnote{Contrary to the standard rough path setting, we deal here with general step-$2$ Carnot groups.}) the translation (or shift) operator known from rough path theory \cite{FrVi10}. 
We give multiple strategies to prove the $\Tc_2$ inequality for our cost $C_{\mathcal{H}}$ on $\bOmega_\bbG$, offering both a bottom-up strategy, as well as a direct approach via an application of a contraction principle \cite{DjGuWu04, riedel_transportationcost_2017} or a lifting of the result from \cite{Le13} to the Carnot group setting.

The bottom-up approach consists of showing a $\Tc_2$ inequality for the heat kernel measure $\bmu_1$ on the Carnot group and inferring the result for the enhanced Wiener measure $\bmu$ by using the tensorisation property of the Talagrand inequality.
Our approach is to discretise the Brownian motion in time, rather than to consider an expansion as in \cite{riedel_transportationcost_2017,Foe21}.
By an Otto--Villani argument \cite{OtVi00, GiLe13}, the $\Tc_2$ inequality for the heat kernel measure on a Carnot group follows from a log-Sobolev inequality. The latter is only partially available: from \cite{Li06, El10} we have certain heat semigroup estimates on the Heisenberg (and so-called H-type) 
groups, which imply the required log-Sobolev inequalities.
Given the correct heat semigroup estimate, our proof does not rely on an H-type setting and holds true for general step-$2$ Carnot groups.

We further prove the $\Tc_2$ inequality for $\bmu$ directly on the path space, via two different approaches. First, we apply a contraction principle to the lift of a standard Brownian motion to deduce the result from the $\Tc_2$ inequality for Wiener measure. This approach also extends to the lifts of more general Gaussian processes; cf.\ \cite[Chapter 15]{FrVi10}. Alternatively, we exploit F\"ollmer's intrinsic drift from \cite{Fo86,Fo88} to prove the $\Tc_2$ inequality for $\bmu$, following the strategy of \cite{Le13}.
The latter approach gives additional insights into so-called \emph{adapted} transport inequalities.
As noted in \cite{Al81,La18,BaBaBeEd19a,BaBaBeEd19b}, in the case of optimal transport problems involving laws of stochastic processes, it is desirable to consider \textit{adapted} couplings rather than general couplings between the laws. We identify the optimal adapted transport plan for the cost $C_{\mathcal{H}}$ and show that equality holds in the $\Tc_2$ inequality when restricting to adapted couplings.

A key difference from the Euclidean setting is that the top-down approach fails in the non-commutative sub-Riemannian setting of Carnot groups. Indeed, in the Euclidean setting, given the $\Tc_2$ inequality for Wiener measure $\mu$, a contraction principle can be applied to deduce the $\Tc_2$ inequality for $\mu_1$. However, for the enhanced Wiener measure $\bmu$, this contraction principle argument breaks down and we cannot deduce the $\Tc_2$ inequality for the heat kernel measure $\bmu_1$ from the corresponding inequality for $\bmu$.
Considering a Riemannian approximation (still non-commutative) to the Carnot group, we find that the validity of the top-down approach is partially recovered. Given the $\Tc_2$ inequality for the law $\bmu^\varepsilon$ of Brownian motion on the approximating Riemannian manifold, the contraction principle implies that $\bmu^\varepsilon_1$ satisfies a $\Tc_p$ inequality for $p \in [1, 2)$, but not for $p = 2$.

We remark that our cost $C_{\mathcal{H}}$ differs from the one considered in \cite[Corollary 1.4]{riedel_transportationcost_2017}, which is defined in terms of the Cameron--Martin norm of the difference of the path in the group projected onto its first component, and which turns out to be suboptimal (see the discussion in \cref{sec:sharp-cost}).
Our cost $C_{\mathcal{H}}$ is a natural choice in the following sense: $C_{\mathcal{H}}$ can be obtained as the 
variational limit (more precisely, the $\Gamma$-limit; see Section~\ref{sec:gamma-conv}) 
of ``finite-dimensional costs'' $C_n$ that arise in our bottom-up approach:
\begin{align*}
    C_n^2(\bomega, \ol \bomega) & = 2^n \sum_{k = 1}^{2^n} d_\cc^2(\bomega_{t_{k - 1}^n, t_k^n}, \ol \bomega_{t_{k - 1}^n, t_k^n}), \quad \bomega,\ol\bomega\in\bOmega_{\bbG},
\end{align*}
where $d_\cc$ denotes the Carnot--Caratheodory metric on $\bbG$. 
We prove in Section~\ref{sec:gamma-conv} that the $\Gamma$-convergence of the cost functions $C_n$ also leads to the 
$\Gamma$-convergence of the optimal transport costs $\T_{C_n, 2}(\bmu, \cdot)$ to $\T_{C_\cH, 2}(\bmu, \cdot)$.

Note that, while our direct approach to proving the $\Tc_2$ inequality gives an elegant and short proof that holds in greater generality, the bottom-up approach, and in particular the $\Gamma$-convergence, yields clear information on the choice of the most natural cost function.

\medskip
For the reader's convenience we summarise our findings as concise statements.

\begin{theorem}[{\em Direct approach}, cf.\ Theorem \ref{thm:t2-top-foellmer}, and extensions in \cref{sec:top-riedel}] \label{thm: main}
	The measure $\bmu$ on $\bOmega_\bbG$ satisfies the $\Tc_2$ inequality $\bmu \in \Tc_2(\bOmega_\bbG, C_\cH, 1)$.    
\end{theorem}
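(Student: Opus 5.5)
We follow Lehec's strategy \cite{Le13}, using Föllmer's intrinsic drift, and lift it to the Carnot group through the translation operator $T_h$.

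Fix $\nu\in\mathcal P(\bOmega_\bbG)$ with $H(\nu\|\bmu)<\infty$; otherwise there is nothing to prove. The key structural fact is that the lift map $B\mapsto\bB$, defined $\mu$-a.s.\ by stochastic (Stratonovich/Itô, with the antisymmetric part) integration, is measurably invertible: the first-level projection $\pi_1\colon\bOmega_\bbG\to\Omega$ recovers $B$ from $\bB$. Consequently the pushforward $\pi_1$ is a bijection between measures absolutely continuous with respect to $\bmu$ and those absolutely continuous with respect to $\mu$. It also preserves relative entropy: $H(\nu\|\bmu)=H(\nu\circ\pi_1^{-1}\|\mu)$, because $\bmu$ is the image of $\mu$ under the lift and $\nu\ll\bmu$ forces $\nu$ to be the image of $\nu_1:=\nu\circ\pi_1^{-1}$ under the same lift.

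Next we apply Föllmer's construction. Let $F=\D\nu_1/\D\mu$. The Föllmer process is $X=B+\int_0^\cdot u_s\,\D s$, with adapted drift $u_s=\nabla\log P_{T-s}F(X_s)$. It satisfies $\Law(X)=\nu_1$ and
\[
\tfrac12\,\mathbb E\int_0^T|u_s|^2\,\D s=H(\nu_1\|\mu).
\]
Put $h=\int_0^\cdot u_s\,\D s$, an $\cH$-valued random variable with $\mathbb E\|h\|_\cH^2=2H(\nu_1\|\mu)$. The candidate coupling is $\lambda=\Law(\bB,\bX)$, where $\bB$ is the lift of $B$ and $\bX$ is the lift of $X$. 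Since $X$ is a semimartingale, its lift is given by the same stochastic integral formula. We then check $\bX=T_h\bB$ almost surely. This is the usual identity that the lift of $B+h$, for $h$ of finite variation, equals the translated rough path: the cross integrals $\int h\otimes\D B$ and $\int B\otimes \D h$ are Young/Itô integrals consistent with the group translation. It must be verified for the general step-$2$ Carnot group $\bbG$, using the paper's definition of $T_h$. Because $h$ is random and adapted, not deterministic, the identity has to be established pathwise a.s.: we would approximate $u$ by simple adapted processes and pass to the limit in probability. This verification is the step we expect to be the main obstacle.

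Granting it, $\Law(\bX)=\nu$, because $\bX$ is the lift of a process with law $\nu_1$. Then
\[
\T^2_{C_\cH,2}(\bmu,\nu)\le\mathbb E\,C_\cH^2(\bB,T_h\bB)\le\mathbb E\|h\|_\cH^2=2H(\nu_1\|\mu)=2H(\nu\|\bmu),
\]
which is $\bmu\in\Tc_2(\bOmega_\bbG,C_\cH,1)$.

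The coupling is adapted by construction, and Föllmer's drift is entropy-optimal. So in the adapted version of the transport problem the reverse inequality should follow from Girsanov's theorem: any adapted coupling with finite cost must be of the form $T_h$ with adapted $h$, and then $\mathbb E\|h\|^2\ge 2H$. This would give the equality statement the paper mentions, although it is not needed for the theorem itself.
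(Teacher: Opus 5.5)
Your proposal is correct and follows the paper's own proof of Theorem~\ref{thm:t2-top-foellmer}: transfer to Wiener space via the lift with the entropy preserved, take F\"ollmer's drift $h$ with $\mathbb{E}\|h\|_\cH^2=2H$, and couple $\Psi(B)$ with $\Psi(B+h)=T_h\Psi(B)$. Two refinements: $\nabla\log P_{T-s}F(X_s)$ is the drift only when $F$ depends on the terminal value, so for a path-space density you should just invoke the existence of the F\"ollmer drift, as the paper does; and the step you flag as the main obstacle is not one, since Proposition~\ref{prop:shift}(iii) gives $\Psi(\omega+h)=T_h\Psi(\omega)$ deterministically for every $\omega\in\dom(\Psi)$ and every $h\in\cH$, so the random $h$ can be inserted pathwise on a full-measure event, with no use of adaptedness and no approximation by simple processes.
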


\begin{theorem}[{\em Bottom-up}, cf.\ Theorem \ref{thm:t2-carnot-path}]
	Suppose that there exists $\alpha > 0$ such that $\bmu_1 \in \Tc_2(\bbG, d_\cc, \alpha)$. Then $\bmu \in \Tc_2(\bOmega_\bbG, C_\cH, \alpha)$.
\end{theorem}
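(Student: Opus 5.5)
The plan is a bottom-up argument: discretise time, tensorise the $\Tc_2$ inequality over the increments of $\bB$, and pass to the limit $n\to\infty$ using the $\Gamma$-convergence of the costs $C_n$ to $C_\cH$. Take $T=1$ without loss of generality.

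\textbf{Step 1: tensorisation.} Fix $n$ and the dyadic times $t_k^n=k2^{-n}$. By the independence and stationarity of group increments, the increments $\bB_{t_{k-1}^n,t_k^n}$, $k=1,\dots,2^n$, are i.i.d. By the scaling (dilation) $\delta_\lambda$ on $\bbG$, each has law $(\delta_{2^{-n/2}})_\#\bmu_1$. Since $d_\cc(\delta_\lambda x,\delta_\lambda y)=\lambda d_\cc(x,y)$, the assumption $\bmu_1\in\Tc_2(\bbG,d_\cc,\alpha)$ gives $(\delta_{2^{-n/2}})_\#\bmu_1\in\Tc_2(\bbG,d_\cc,2^n\alpha)$. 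Tensorisation of $\Tc_2$ for product measures with additive quadratic cost then shows that the law $\bmu^{(n)}$ of the increment vector on $\bbG^{2^n}$ satisfies
\[
\inf_{\lambda}\int 2^n\sum_{k}d_\cc^2(x_k,y_k)\,\D\lambda\le \frac{2}{\alpha}H(\cdot\|\bmu^{(n)}).
\]
This is exactly a $\Tc_2$ inequality for the cost $C_n$, once the cost is pulled back along the increment map $\pi_n\colon\bOmega_\bbG\to\bbG^{2^n}$.

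\textbf{Step 2: lifting to path space.} Given $\nu\ll\bmu$, the data processing inequality gives $H((\pi_n)_\#\nu\|(\pi_n)_\#\bmu)\le H(\nu\|\bmu)$. A coupling of the projected measures lifts to a coupling of $\bmu$ and $\nu$ by gluing with the conditional laws given $\pi_n$. Because $C_n$ depends only on $\pi_n$, this yields
\[
\T_{C_n,2}^2(\bmu,\nu)\le \tfrac{2}{\alpha}H(\nu\|\bmu)\quad\text{for all } n.
\]

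\textbf{Step 3: passage to the limit, the main obstacle.} I need $\T_{C_\cH,2}^2(\bmu,\nu)\le\liminf_n \T_{C_n,2}^2(\bmu,\nu)$. I plan to invoke the $\Gamma$-convergence result announced for Section~\ref{sec:gamma-conv}. Its liminf part should show that if $\bomega^n\to\bomega$ and $\ol\bomega^n\to\ol\bomega$ with $\liminf C_n(\bomega^n,\ol\bomega^n)<\infty$, then $\ol\bomega=T_h\bomega$ with $\|h\|_\cH^2\le\liminf C_n^2$.

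The heuristic for this is as follows. Geodesics joining the increments define piecewise horizontal corrections $h^n$ whose $\cH$-norm squared is at most $C_n^2$. Bounded $\cH$-norm gives weak compactness of the $h^n$, and continuity of the translation $T_h$ identifies the limit. The same argument shows that $C_n^2$ is nondecreasing along dyadic refinements up to controlled errors.

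Next, take optimal couplings $\lambda_n$ for $C_n$. Their marginals are fixed, so the family is tight and has a weakly convergent subsequence $\lambda_n\to\lambda$. Lower semicontinuity of the transport functional under $\Gamma$-liminf costs, via a Skorokhod representation plus Fatou's lemma, then gives $\int C_\cH^2\,\D\lambda\le\liminf_n\int C_n^2\,\D\lambda_n$.

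The delicate point is that $C_\cH$ is not lower semicontinuous for general sequences; it takes the value $+\infty$ off translates. The liminf inequality must therefore be handled carefully, using the uniform bound on the $h^n$. The subtlety of the Carnot-group translation (not just $\R^d$ addition) also enters here.

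Combining the three steps gives $\T_{C_\cH,2}^2(\bmu,\nu)\le\frac{2}{\alpha}H(\nu\|\bmu)$, that is, $\bmu\in\Tc_2(\bOmega_\bbG,C_\cH,\alpha)$.
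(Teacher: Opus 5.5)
Your Steps~1--2 are exactly the paper's Lemma~\ref{lem:t2-dilation}, Proposition~\ref{prop:t2-tensorisation} and Lemma~\ref{lem:rel-entropy-bound}. Your Step~3 is valid, but it is not how the paper proves Theorem~\ref{thm:t2-carnot-path}; it is the shortcut the paper only mentions in the remark after Proposition~\ref{prop:liminfTransProb}.

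The paper's limit passage never touches the second level. For $\bnu\ll\bmu$ it writes $\bnu=\Psi_\sharp\nu$ with $\nu\ll\mu$. It lifts an optimal plan for $\T_{c_\cH,2}(\mu,\nu)$ by $\Psi\times\Psi$ and uses $\Psi(\ol\omega)=T_{\ol\omega-\omega}\Psi(\omega)$ to get $\T_{C_\cH,2}(\bmu,\bnu)\le\T_{c_\cH,2}(\mu,\nu)$. It then combines $C_n\ge c_n\circ(\pi_1\times\pi_1)$ with the monotone convergence $c_n\uparrow c_\cH$ of the Euclidean discrete costs (Lemma~\ref{lem:euclidean-liminf}) to obtain $\T_{c_\cH,2}(\mu,\nu)\le\limsup_n\T_{C_n,2}(\bmu,\bnu)$. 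This is elementary and blind to non-commutativity: the pointwise blow-up of $C_n$ in Proposition~\ref{prop:blow-up} plays no role. It does, however, lean on $\bnu\ll\bmu$ and on the lift/shift structure of $\bmu$.

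Your route instead needs the full $\Gamma$-liminf of Proposition~\ref{prop:liminf}, whose real content is identifying the second level of the limit. In return it gives lower semicontinuity of $\blambda\mapsto\int C_n^2\,\D\blambda$ along arbitrary weakly convergent plans with arbitrary marginals. That is what the paper needs anyway for Theorem~\ref{thm:GammaConvTransport}.

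Your heuristic for that $\Gamma$-liminf can be made rigorous, but the phrase ``continuity of $T_h$ identifies the limit'' skips the essential step, because $\ol\bomega^n$ is not a shift of $\bomega^n$. Build $h^n$ from constant-speed geodesics joining $0$ to $(\bomega^n_{t_{k-1}^n,t_k^n})^{-1}\ol\bomega^n_{t_{k-1}^n,t_k^n}$, so that $\|h^n\|_\cH=C_n(\bomega^n,\ol\bomega^n)$. A step-$2$ computation then gives, at dyadic times, $\ol\bomega^n_{t_j^n}=(T_{h^n}\bomega^n)_{t_j^n}\,(0,\rho^n_{t_j^n})$. Here the central error is $\rho^n_{t_j^n}=-\sum_{k\le j}\SCO\int_{t_{k-1}^n}^{t_k^n}h^n_{t_{k-1}^n,r}\otimes\D\omega^n_r$ with $\omega^n=\pi_1\bomega^n$.

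This error is bounded by a constant times the $2^{-n}$-modulus of continuity of $\omega^n$ times $\|h^n\|_\cH$, so it vanishes by equicontinuity. Only then does continuity of $(h,\bomega)\mapsto T_h\bomega$ give $\ol\bomega=T_h\bomega$, under weak convergence in $\cH$ combined with uniform convergence (which does hold). This would replace the paper's Case~2 contradiction argument via Lemma~\ref{lem:vertical-cost}.

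Finally, $C_\cH$ is in fact lower semicontinuous (Lemma~\ref{lem:cost-mb}). The genuine difficulty is that $C_n\not\to C_\cH$ pointwise. Your claimed approximate monotonicity of $C_n$ under dyadic refinement is neither proved nor needed.
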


\begin{theorem}[{\em $\mathcal{T}_2$ on group}, cf.\ Theorem \ref{thm:t2-h-type}]
	Let $\bbG$ be an H-type group. Then there exists $\alpha > 0$ such that $\bmu_1 \in \Tc_2(\bbG, d_\cc, \alpha)$.
\end{theorem}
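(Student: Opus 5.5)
The plan is to go through a log-Sobolev inequality for the heat kernel measure and then use the Otto--Villani theorem, as announced in the introduction. Let $P_t$ be the heat semigroup on $\bbG$ generated by the sub-Laplacian $L=\tfrac12\sum_{i=1}^d X_i^2$, where $X_1,\dots,X_d$ are the left-invariant horizontal vector fields. Let $\nabla_H=(X_1,\dots,X_d)$ be the horizontal gradient. Then $\bmu_1$ is the law of $P_1$ at the identity, i.e.\ $\bmu_1 f=P_1 f(e)$. I would proceed in three steps: a gradient bound, then log-Sobolev, then $\Tc_2$.

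\emph{Step 1: the gradient bound.} The key input is the $L^1$-type gradient bound of H.-Q.\ Li (for the Heisenberg group) and of Eldredge (for H-type groups). There exists $K>0$ such that
\[
|\nabla_H P_t f|\le K\,P_t|\nabla_H f|,\qquad t>0,\ f\in C_c^\infty(\bbG).
\]
This is the main obstacle: the Bakry--\'Emery curvature-dimension condition fails on Carnot groups, so the constant $K=1$ is impossible and one cannot simply use $\Gamma_2\ge0$. I would import this estimate from \cite{Li06,El10} rather than reprove it. That is also why the statement is restricted to H-type groups, where the estimate is available via the explicit heat kernel asymptotics.

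\emph{Step 2: log-Sobolev.} I would apply the standard semigroup interpolation to $\Phi(s)=P_s\bigl(P_{1-s}f\log P_{1-s}f\bigr)(e)$ for positive $f$. Differentiating gives
\[
\Phi'(s)=\tfrac12 P_s\bigl(|\nabla_H P_{1-s}f|^2/P_{1-s}f\bigr).
\]
Step 1 together with Cauchy--Schwarz gives
\[
|\nabla_H P_{1-s}f|^2\le K^2\bigl(P_{1-s}|\nabla_H f|\bigr)^2\le K^2\,P_{1-s}f\,P_{1-s}\bigl(|\nabla_H f|^2/f\bigr).
\]
Integrating over $s\in[0,1]$ then yields
\[
\operatorname{Ent}_{\bmu_1}(f)\le \tfrac{K^2}{2}\int |\nabla_H f|^2/f\,\D\bmu_1 .
\]
With $f=g^2$ this is a log-Sobolev inequality with constant $\alpha=1/K^2$ for the energy $\int|\nabla_H g|^2\D\bmu_1$, with the normalisation adjusted to match the paper's convention.

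\emph{Step 3: from log-Sobolev to $\Tc_2$.} I would use the Otto--Villani theorem in the metric-measure form of \cite{GiLe13}, which holds on a Polish geodesic space once the slope of a Lipschitz function is dominated by the energy. On $(\bbG,d_\cc)$ one has $|\nabla_H g|=\operatorname{lip}(g)$, the local Lipschitz constant with respect to $d_\cc$, for smooth $g$, and $d_\cc$ is a complete geodesic length metric. The route through Hamilton--Jacobi/Hopf--Lax semigroups then gives $\bmu_1\in\Tc_2(\bbG,d_\cc,\alpha)$. This route uses the LSI applied to $e^{\lambda Q_t g}$ with infimal convolutions $Q_t$. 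The points to check are:
\begin{itemize}
\item the LSI extends from $C_c^\infty$ to locally Lipschitz functions, by horizontal mollification and Pansu differentiability;
\item $\bmu_1$ has Gaussian tails in $d_\cc$, which follows from Gaussian heat kernel bounds and ensures integrability.
\end{itemize}
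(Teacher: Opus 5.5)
Your proposal is correct and follows essentially the same route as the paper. Both import the gradient bound \eqref{eq:heat-semigroup-estimate} from \cite{El10}, derive \eqref{eq:log-sob} by semigroup interpolation, extend it to Lipschitz functions by mollification and Pansu's theorem, and conclude with \cite[Theorem 5.2]{GiLe13}. The only differences are minor: the paper runs the interpolation for a general $\phi$ with $-1/\phi''$ convex via Jensen, which for $\phi(x)=x\log x$ reduces to your Cauchy--Schwarz step. Your factor $\tfrac12$ in $\Phi'$, coming from $\Delta_\bbG=\tfrac12\sum_i V_i^2$, is the correct one and gives $\alpha=1/K^2$ instead of the paper's weaker $\alpha=1/(2K^2)$. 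Your separate Gaussian-tail check is not needed as an extra input.
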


\begin{theorem}[{\em Adapted couplings}, cf.\ Theorem \ref{thm:t2-top-foellmer}]\label{thm: main2}
	Let $\bnu$ be a probability measure on $\bOmega_\bbG$ with $\bnu \ll \bmu$. Then the optimal adapted coupling between $\bmu$ and $\bnu$ is given explicitly and
	\begin{align*}
		\T_{C_{\mathcal{H}}, 2}^{\ad}(\bmu,\bnu)^2 = 2H(\bnu \| \bmu).
	\end{align*}
\end{theorem}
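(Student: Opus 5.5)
We follow Lehec's argument \cite{Le13}, built on F\"ollmer's intrinsic drift \cite{Fo86,Fo88}, and lift it to $\bOmega_\bbG$. Let $\bnu \ll \bmu$ with $H(\bnu\|\bmu)<\infty$; otherwise there is nothing to prove. The key structural fact is that $\bB$ is a measurable (adapted) functional of $B$, namely the lift $B \mapsto \bB = S(B)$. Moreover, on the $\bmu$-full set of lifted paths the first-level projection $\pi_1$ is inverted by $S$. Hence $\bnu = S_\# \nu$ with $\nu := (\pi_1)_\# \bnu \ll \mu$, and
\[
H(\bnu\|\bmu) = H(\nu \| \mu).
\]
This holds because the density $\D\bnu/\D\bmu$ is a function of $B$ alone, so entropy is preserved under the bimeasurable map $S$.

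The first step is to write $F = \D\nu/\D\mu$ and take the F\"ollmer drift $u_t = \nabla \log P_{T-t}F(B_t)$. Under $\mathbb{P}$, with $B$ a Brownian motion, the process $X_t = B_t + \int_0^t u_s\,\D s$ has law $\nu$. The drift $h := \int_0^\cdot u_s \,\D s$ lies in $\cH$ almost surely, and
\[
\tfrac12 \mathbb{E}\|h\|_\cH^2 = \tfrac12\mathbb{E}\int_0^T |u_s|^2\,\D s = H(\nu\|\mu).
\]
The second step is to lift this identity. Since $h\in\cH$ has finite $q$-variation for some $q<2$, Young integration gives the translated rough path
\[
S(B+h) = T_h \bB \qquad \text{a.s.}
\]
In the free step-2 case this is the standard fact; for a general step-2 Carnot group we use the (essentially) translation operator defined in the paper, whose second level is the image of the free one under the linear structure map. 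Consequently $(\bB, T_h\bB)$ is a coupling of $\bmu$ and $\bnu$ with
\[
C_\cH(\bB, T_h\bB)\le \|h\|_\cH .
\]

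The third step combines these:
\[
\T_{C_\cH,2}^2(\bmu,\bnu) \le \mathbb{E}\|h\|_\cH^2 = 2H(\nu\|\mu) = 2H(\bnu\|\bmu),
\]
which is $\bmu\in\Tc_2(\bOmega_\bbG,C_\cH,1)$. This coupling is adapted, since $u$ is adapted. For the matching lower bound among adapted couplings, we take any adapted coupling with finite cost. Then $\ol\bomega = T_h\bomega$, and projecting to the first level gives an adapted $h$ transporting $\mu$ to $\nu$. Girsanov's theorem together with the data-processing inequality gives $\tfrac12\mathbb{E}\|h\|^2_\cH \ge H(\nu\|\mu)$, which is Lehec's argument applied to the projection.

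The main obstacle is the second step. It requires showing that $S(B+h) = T_h\bB$ holds almost surely for a random, adapted $h\in\cH$, in the generality of arbitrary step-2 Carnot groups. It also requires checking that $C_\cH$ is measurable, so that the transport cost is well defined. I would handle the identity pathwise: on the full-measure event where $\bB$ is the limit of piecewise-linear lifts, we use continuity of the translation $(\bomega,h)\mapsto T_h\bomega$, which is valid for $h\in\cH\subset C^{1\text{-var}}$.
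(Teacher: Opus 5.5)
Your overall route is the paper's: pass to $\nu=(\pi_1)_\sharp\bnu\ll\mu$ with $\bnu=\Psi_\sharp\nu$ and $H(\bnu\|\bmu)=H(\nu\|\mu)$, lift the F\"ollmer coupling through $\Psi$, and get the lower bound by projecting adapted couplings to the first level. The step you single out as the main obstacle is already settled. Proposition~\ref{prop:shift}(iii) gives $T_h\Psi(X)=\Psi(X+h)$ deterministically for every $X\in\dom(\Psi)$ and every $h\in\cH$, so it applies pathwise to a random adapted $h$. Lower semicontinuity, hence measurability, of $C_\cH$ is Lemma~\ref{lem:cost-mb}.

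For the lower bound you re-prove the Euclidean adapted inequality by Girsanov and data processing. You should add that bicausality makes $\pi_1\bomega$ a Brownian motion in the joint filtration, so that $h$ is an admissible adapted drift. The paper instead quotes \cite[Theorem 3]{Foe21} and transfers it via $c_\cH\circ(\pi_1\times\pi_1)\le C_\cH$ and Lemma~\ref{lem:adapted-couplings}; that difference is harmless.

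The step that fails as written is your drift. Since $F=\D\nu/\D\mu$ is a functional of the whole path, $\nabla\log P_{T-t}F(B_t)$ is not defined unless $F$ depends only on the endpoint. Even in that case the drift must be evaluated along the $\nu$-distributed process, $\D X_t=\nabla\log P_{T-t}F(X_t)\di t+\D B_t$. With $u$ a function of $B$, the process $X=B+\int_0^\cdot u_s\di s$ does not have law $\nu$ in general.

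What you need is the path-space F\"ollmer drift, which is \cite[Proposition 1]{Foe21} and comes from the martingale representation of $\E_\mu[F\mid\F_t]$. It is a predictable $b^\nu$ on the canonical space such that $B^\nu=B-\int_0^\cdot b^\nu_t\di t$ is a Brownian motion under $\nu$ with respect to the filtration of the canonical process, and $\E_\nu\|B^\nu-B\|_\cH^2=2H(\nu\|\mu)$. The coupling is then $\Law_\nu(\Psi(B^\nu),\Psi(B))$. Its bicausality comes from $B^\nu$ being adapted to, and a Brownian motion in, the filtration of the $\nu$-process; that is how ``$u$ is adapted'' must be understood.

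Lastly, the statement refers to \emph{the} optimal adapted coupling. Your argument shows the lifted F\"ollmer coupling is optimal, but not that it is unique. The paper gets uniqueness from \cite[Theorem 3]{Foe21}, transported through the cost-preserving correspondence $\lambda\mapsto(\Psi\times\Psi)_\sharp\lambda$ between adapted couplings.
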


\begin{theorem}[{\em Cost approximation}, cf.\ \cref{cor:gamma-convergence,thm:GammaConvTransport}]\label{thm: main3}
~
\begin{itemize}
    \item[(i)] Pointwise convergence $C_{n} \to  C_{\mathcal{H}}$ fails (by example),
    \item[(ii)] $\Gamma$-convergence\footnote{Recall that $\Gamma$-convergence is
a natural notion of convergence from the theory of calculus of variations
for sequences of functionals, which guarantees the convergence of minimisers and minima.} $C_{n}\stackrel{\Gamma}{\to} C_{\mathcal{H}}$ holds with respect to the uniform topology on $\bOmega_\bbG \times \bOmega_\bbG$, 
    \item[(iii)] $\T_{C_n, 2}(\bmu, \cdot) \xrightarrow{\Gamma} \T_{C_\cH, 2}(\bmu, \cdot)$ with respect to the weak topology on $\Pc(\bOmega_\bbG)$.
\end{itemize}
\end{theorem}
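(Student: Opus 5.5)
We treat the three parts in turn. The approach is to compare $C_n$ with the Cameron--Martin energy of an interpolating horizontal path, and to pass to the limit using continuity of the cross integrals $\int h\otimes \D\omega$ under weak $\cH$-convergence.

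\textbf{(i) Failure of pointwise convergence.} We will exhibit an explicit pair with $\ol\bomega = T_h\bomega$ and $\liminf_n C_n(\bomega,T_h\bomega) > \|h\|_\cH$. Write $\bomega_{s,t}^{-1}(T_h\bomega)_{s,t}$ in exponential coordinates. Its first level is $h_{s,t}$. Its second level contains, besides $\mathrm{Anti}\int h\otimes \D h$, the cross terms $\mathrm{Anti}\bigl(\int_s^t \omega^1_{s,r}\otimes \D h_r+\int_s^t h_{s,r}\otimes \D\omega^1_r\bigr)$ and the commutator $-\tfrac12[\omega^1_{s,t},h_{s,t}]$. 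These cross terms do not cancel in general. For $\omega$ with large oscillations at scale $2^{-n}$ and $h$ linear, they are of order $|\omega^1|_{\text{osc}}\,2^{-n}$. Since $d_\cc\gtrsim|\text{area}|^{1/2}$, they contribute an amount of order $2^n\cdot 2^n\cdot 2^{-n}\cdot(\text{osc})$ to $C_n^2$. We will choose a concrete deterministic $\bomega$, for instance a ``pure-area'' or highly oscillating geometric path, for which this contribution stays bounded away from zero, or even diverges.

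\textbf{(ii) $\Gamma$-convergence of costs.} For the \emph{liminf inequality}, let $(\bomega_n,\ol\bomega_n)\to(\bomega,\ol\bomega)$ uniformly with $\sup_n C_n(\bomega_n,\ol\bomega_n)<\infty$. On each dyadic interval $I_k$ we join $0$ to $(\bomega_n)_{I_k}^{-1}(\ol\bomega_n)_{I_k}$ by a constant-speed horizontal geodesic, and we let $h_n$ be the concatenated first-level path. Then $\|h_n\|_\cH^2\le C_n^2(\bomega_n,\ol\bomega_n)$. Passing to a subsequence, $h_n\rightharpoonup h$ weakly in $\cH$, and $\|h\|_\cH\le\liminf C_n$. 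It remains to identify $\ol\bomega=T_h\bomega$. The first level passes to the limit directly. For the second level, integration by parts writes the cross integrals as $\int\omega_n\,\D h_n$, which converge under uniform convergence of $\omega_n$ together with weak $\cH$-convergence of $h_n$. The commutator corrections on the grid form Riemann sums for these same integrals.

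For the \emph{recovery sequence}, given $\ol\bomega=T_h\bomega$, we keep $\bomega_n=\bomega$ fixed. We define $\ol\bomega_n$ on each $I_k=[t_{k-1},t_k]$ by the group product $\ol\bomega_n(t)=\ol\bomega_n(t_{k-1})\,\bomega_{t_{k-1},t}\,\exp(h_{t_{k-1},t})$, where $h_{t_{k-1},t}$ is lifted along $h$. Then $d_\cc\bigl(\bomega_{I_k},(\ol\bomega_n)_{I_k}\bigr)=d_\cc(0,\bh_{I_k})\le\int_{I_k}|\dot h|$, and Cauchy--Schwarz gives $C_n\le\|h\|_\cH$. \textbf{The main obstacle} is showing $\ol\bomega_n\to T_h\bomega$ uniformly. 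Here the second-level discrepancy is a sum over $k$ of terms $\int_{I_k}\omega^1_{t_{k-1},r}\,\D h_r$-type. We must show these sums reproduce the cross integrals in $T_h\bomega$ up to an error vanishing with the mesh. We will first use uniform continuity of $\omega^1$ together with $|h_{I_k}|\le 2^{-n/2}\|h\|_\cH$. If that is insufficient, we will replace $\exp(h_{t_{k-1},t})$ by a conjugated increment that absorbs the commutator.

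\textbf{(iii) $\Gamma$-convergence of transport costs.} For the \emph{liminf}, take $\bnu_n\to\bnu$ weakly and near-optimal couplings $\lambda_n\in\Pi(\bmu,\bnu_n)$. These are tight, so a subsequence converges to some $\lambda\in\Pi(\bmu,\bnu)$. By Skorokhod representation and the liminf inequality of (ii), Fatou's lemma gives $\int C_\cH^2\,\D\lambda\le\liminf\int C_n^2\,\D\lambda_n$.

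For the \emph{limsup}, take an optimal $\lambda$ for $\T_{C_\cH,2}(\bmu,\bnu)<\infty$. Then $\lambda$-a.s.\ $\ol\bomega=T_h\bomega$, and $h=h(\bomega,\ol\bomega)$ is measurable. We push $\lambda$ forward under $(\bomega,\ol\bomega)\mapsto(\bomega,\ol\bomega_n)$ with $\ol\bomega_n$ the recovery path from (ii). This keeps the first marginal equal to $\bmu$. The second marginals $\bnu_n$ converge weakly to $\bnu$, and the bound $C_n^2\le\|h\|_\cH^2$ lets dominated convergence give $\limsup\T_{C_n,2}(\bmu,\bnu_n)\le\T_{C_\cH,2}(\bmu,\bnu)$.
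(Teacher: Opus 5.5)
\textbf{Comparison with the paper.} Two of your three parts follow the paper. Your recovery construction in (ii), $\ol\bomega_n(t)=\ol\bomega_n(t_{k-1})\,\bomega_{t_{k-1},t}\,\bh_{t_{k-1},t}$, is exactly the paper's $T_h\bomega\,(\bvtheta^n)^{-1}$ from \Cref{prop:limsup} with $\bomega^n=\bomega$. Your part (iii) is \Cref{prop:liminfTransProb,prop:limsupTransProb,thm:GammaConvTransport}.

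Your liminf in (ii) takes a genuinely different route. The paper splits into cases. When $\ol\bomega=T_h\bomega$ it uses the Euclidean bound $C_n\ge c_n(\pi_1\cdot,\pi_1\cdot)$ of \Cref{lem:euclidean-liminf}. When $\pi_1\ol\bomega-\pi_1\bomega\in\cH$ but $\ol\bomega$ is not a shift, it combines a recovery sequence, a triangle inequality for $C_n$ and the vertical blow-up of \Cref{lem:vertical-cost}.

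Your geodesic interpolants give $\|h_n\|_\cH=C_n(\bomega_n,\ol\bomega_n)$ and $\Psi(h_n)_{I_k}=(\bomega_n)_{I_k}^{-1}(\ol\bomega_n)_{I_k}$. Hence $\ol\bomega_n(t_k)=\prod_{j\le k}(\bomega_n)_{I_j}\Psi(h_n)_{I_j}$. In step $2$, the second level of this product is $\bomega_n^{(2)}+\Psi(h_n)^{(2)}$ plus Riemann--Stieltjes sums for $\int\omega_n\otimes\di h_n$ and $\int h_n\otimes\di\omega_n$. The errors of these sums are at most $C\,\mathrm{osc}_{2^{-n}}(\omega_n)\|h_n\|_\cH\to0$, because a uniformly convergent sequence is equicontinuous. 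The integrals themselves converge by strong--weak convergence.

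This identifies every finite-energy limit as $T_h\bomega$ with $\|h\|_\cH\le\liminf C_n$ in one step. You need no case distinction and no \Cref{lem:vertical-cost}. It also makes explicit the first-level cross-term control that the proof of that lemma leaves implicit when the approximating first levels differ. The paper's route, in exchange, avoids geodesics and only needs the Euclidean comparison and the gauge-norm lower bound.

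\textbf{Gap in part (i).} Part (i) is not established as written.

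First, the ``pure-area'' candidate fails. If $\pi_1\bomega\equiv0$, then $\bomega$ takes values in the centre, so $T_h\bomega=\bomega\bh$ and $C_n(\bomega,T_h\bomega)=C_n(0,\bh)\to\|h\|_\cH$ by \Cref{lem:lifted-CM-path}.

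More importantly, your estimate ``of order $\mathrm{osc}\cdot 2^{-n}$'' is only an upper bound. For $h_t=te_1$ one has $\bomega_{I_k}^{-1}(T_h\bomega)_{I_k}=(2^{-n}e_1,\theta_{I_k})$, where $\theta_{I_k}=\sum_j w_{1j}\int_{I_k}(\omega^j_{t_k}-\omega^j_r)\di r$ are signed quantities. What you need is a \emph{lower} bound on $2^n\sum_k|\theta_{I_k}|$, that is, absence of cancellation at every dyadic scale, and oscillation alone does not give this.

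The paper obtains it by taking $\bomega=\bB$. The $\theta_{I_k}$ are then independent centred Gaussians of size $2^{-3n/2}$. Chebyshev plus Borel--Cantelli then give $C_n^2\ge c\,2^{n/2}$ for large $n$, almost surely (\Cref{lem:shift-example,prop:blow-up}). So almost every Brownian path is an example. You need either this or an explicit path with a proven non-cancellation bound.

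\textbf{Slip in the recovery estimate.} The bound you propose does not close. Bounding each interval by $\mathrm{osc}\cdot|h_{I_k}|\le\mathrm{osc}\cdot 2^{-n/2}\|h\|_\cH$ and summing over $2^n$ intervals leaves a factor $2^{n/2}\,\mathrm{osc}_{2^{-n}}(\omega)$, which does not vanish for Brownian-type $\omega$.

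Instead, integrate by parts to $\int_{I_k}(\omega_{t_k}-\omega_r)\otimes\dot h_r\di r$ and sum against the total variation of $h$. The discrepancy is then at most $C\,\mathrm{osc}_{2^{-n}}(\omega)\int_0^1|\dot h_r|\di r\to0$, which is the paper's bound on $\|\vartheta^n\|_\infty$.

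Drop the ``conjugated increment'' fallback. Conjugation adds a vertical commutator to $\bomega_{I_k}^{-1}(\ol\bomega_n)_{I_k}$, and $d_\cc(0,\cdot)$ is not conjugation invariant. This reintroduces vertical terms of the size of $\theta_{I_k}$, which is exactly the mechanism behind the blow-up in (i), and it destroys the bound $C_n\le\|h\|_\cH$.
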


\begin{theorem}[{\em Top-down -- validity vs.\ failure}]A contraction principle
\begin{itemize}
    \item[(i)] gives the implication 
    $\mu \in \Tc_2(\Omega, \| . - . \|_\cH, \alpha)
    \implies \mu_1 \in \Tc_2(\R^d, |.-.|, \alpha)$,    
    \item[(ii)] does not give 
    $\bmu \in \Tc_2(\bOmega, C_\cH, \alpha)
    \implies 
    \bmu_1 \in
    \Tc_2(\bbG, d_\cc, \alpha')
   $, no matter 
   $\alpha,\alpha'>0,$
   \item[(iii)] gives a weak implication $\bmu^{\epsilon} \in \Tc_{2}(\bOmega_{\bbH_{\epsilon}},C^{\epsilon}_{\cH},\alpha) 
    \implies 
    \bmu^\varepsilon_1 \in
    \Tc_{p}(\bbH_{\epsilon},d_{\epsilon},\wt\alpha(\varepsilon, p))
   $, for $p \in [1, 2)$, where $(\bbH_{\epsilon},d_{\epsilon})$ is a Riemannian approximation to the $(2+1)$-dimensional Heisenberg group. 
\end{itemize}
\end{theorem}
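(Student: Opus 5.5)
The plan treats the three parts of the statement separately. Each rests on the same \emph{contraction principle}: if $\mu\in\Tc_2(E,c,\alpha)$ and $F\colon E\to E'$ is measurable with $c'(F(x),F(y))\le L\,c(x,y)$, then $F_\#\mu\in\Tc_2(E',c',\alpha/L^2)$. This holds because $H(\nu'\|F_\#\mu)=\inf\{H(\nu\|\mu):F_\#\nu=\nu'\}$, and that infimum is attained by $\nu=\frac{\D\nu'}{\D F_\#\mu}\circ F\cdot\mu$. Pushing an (almost) optimal coupling forward by $F\times F$ then gives the bound.

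For (i), take $F(\omega)=\omega_1$. For $\ol\omega=\omega+h$ with $h\in\cH$, Cauchy--Schwarz gives
\[
|\ol\omega_1-\omega_1|=|h_1|=\Bigl|\int_0^1\dot h_s\,\D s\Bigr|\le\|h\|_\cH .
\]
So $F$ is $1$-Lipschitz from $(\Omega,\|\cdot-\cdot\|_\cH)$ to $(\R^d,|\cdot-\cdot|)$, and $\mu_1=F_\#\mu\in\Tc_2(\R^d,|\cdot-\cdot|,\alpha)$. Here I take $T=1$, which is harmless by Brownian scaling.

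For (ii), I would show that the natural map $\bF(\bomega)=\bomega_1$ admits no Lipschitz-type bound $d_\cc(\bomega_1,(T_h\bomega)_1)\le L\|h\|_\cH$, uniformly in $\bomega$. The shifted path satisfies $(T_h\bomega)_1=\bomega_1\otimes(h_1,\ \text{cross terms})$. Its second-level component contains $\mathrm{Anti}\int_0^1(\omega\otimes\D h+h\otimes\D\omega)$, which is linear in $h$ but with coefficient depending on $\omega$. The left-invariant distance $d_\cc(\bomega_1,(T_h\bomega)_1)=|\bomega_1^{-1}(T_h\bomega)_1|_\cc$ therefore behaves like $\sqrt{|\text{area}|}\approx\sqrt{\|\omega\|_\infty\,\|h\|_\cH}$. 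Taking $\omega$ with large oscillation and fixed small $h$ gives a ratio $d_\cc/\|h\|_\cH\to\infty$.

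The main obstacle is that a failed Lipschitz bound does not rule out the implication itself, since the inequality for $\bmu_1$ might still hold by other means. So (ii) should be phrased as the statement that the contraction principle does not give the implication. To make this stronger, I would show that the constant in the transported inequality necessarily degenerates. The plan is to test with $\bnu=\bmu(\cdot\,|\,A)$ for $A$ a set where $\|\omega\|_\infty$ is large: push forward the optimal coupling, and show that the resulting $d_\cc^2$-cost is not controlled by $\frac{2}{\alpha'}H(\bnu\|\bmu)$ along a family of such tests, using the Gaussian tail of $\|\omega\|_\infty$.

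For (iii), on the Riemannian approximation $\bbH_\varepsilon$ the distance $d_\varepsilon$ is comparable to a Euclidean-type norm whose vertical part is $|z|/\varepsilon$, up to polynomial corrections. With this, I would aim for the bound
\[
d_\varepsilon\bigl(\bomega_1,(T_h\bomega)_1\bigr)\le C\bigl(1+\varepsilon^{-1}\|\omega\|_{\infty}\bigr)\|h\|_\cH .
\]
This is Lipschitz with an $\omega$-dependent constant $L(\omega)$ that has Gaussian moments. Given a coupling $\lambda$ for $\bmu^\varepsilon$, Hölder's inequality with exponents $2/p$ and $2/(2-p)$ gives
\[
\int d_\varepsilon^p\,\D(\bF\times\bF)_\#\lambda\le\Bigl(\int C_\cH^2\,\D\lambda\Bigr)^{p/2}\Bigl(\int L^{2p/(2-p)}\,\D\lambda\Bigr)^{(2-p)/2}.
\]
The difficulty is that the $L$-moment is taken under the first marginal only if $L$ depends on $\omega$ alone. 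I would arrange this by choosing the coupling orientation accordingly, which makes that factor a finite constant $K(\varepsilon,p)$ under $\bmu^\varepsilon$. Combining with $\bmu^\varepsilon\in\Tc_2$ yields $W_p^{2}\lesssim H$. From this, $\Tc_p$ follows in the form used in the paper, with $\wt\alpha(\varepsilon,p)$ absorbing $K$.

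Finally, $p=2$ is excluded because then Hölder requires $L\in L^\infty$, which fails by the unboundedness established in (ii).
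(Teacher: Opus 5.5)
Your parts (i) and (iii) follow the paper's argument. In (i) you use the bound $|h_1|\le\|h\|_\cH$ and the contraction principle with $L\equiv 1$. In (iii) you split $\bomega_1^{-1}(T^\varepsilon_h\bomega)_1$ into $\bh_1$ times a central area term, then use:
\begin{itemize}
\item the length bound $d_\varepsilon(0,\bh_1)\le\|h\|_\cH$;
\item the vertical bound $d_\varepsilon(0,(0,0,z))\le\varepsilon^{-1}|z|$, with the area controlled by a multiple of $\|\omega\|_\infty\|h\|_\cH$ after integration by parts;
\item Lemma~\ref{lem:contraction} with $L=1+2\varepsilon^{-1}\|\omega\|_\infty\in L^q(\bmu^\varepsilon)$, $q=\tfrac{2p}{2-p}$.
\end{itemize}
Only this upper bound on $d_\varepsilon$ is needed. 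Your claim that $d_\varepsilon$ is comparable to a norm with vertical part $|z|/\varepsilon$ is false globally, since $d_\varepsilon(0,(0,0,z))\asymp|z|^{1/2}$ for large $|z|$ by \eqref{eq:rie-distance-estimates}.

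\textbf{Part (ii): gap.} Your route differs from the paper's and is not yet a proof. Lemma~\ref{lem:contraction} only asks for the bound off a $\bmu$-null set. So you must show that $d_\cc(\bomega_1,(T_h\bomega)_1)/\|h\|_\cH$ is large on sets of positive $\bmu$-measure. Your heuristic $d_\cc\approx\sqrt{\|\omega\|_\infty\|h\|_\cH}$ is not a lower bound: the area $\int_0^1\SCO(h_{0,r}\otimes\D\omega_r)$ vanishes when, for instance, $\omega$ moves only in the direction of $h$, however large $\|\omega\|_\infty$ is.

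The missing ingredient is Lemma~\ref{lem:shift-example}. For $h_t=(\delta t,0,\dots,0)$ and $\omega=B$, the area is $\delta$ times a non-degenerate Gaussian. By \eqref{eq:gauge-distance} this gives $d_\cc^2(\bB_1,(T_h\bB)_1)\ge C\delta|Z|$.

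\textbf{What the paper's route buys.} The paper keeps $\bB$ fixed and sends $\delta\to0$. Since $C_\cH^2(\bB,T_h\bB)=\delta^2$, this forces $L(\bB)^2\ge C|Z|/\delta\to\infty$. Hence $\bmu(L=\infty)>0$ and $L\notin L^q$ for every $q$, so the contraction principle gives no $\Tc_p$ inequality for any $p\in[1,2]$.

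Your mechanism (fixed $h$, large oscillation of $\omega$) only yields $L\notin L^\infty$. That suffices for the $\Tc_2$ claim in (ii). But it is exactly the mechanism of Proposition~\ref{prop:RiemannContract} in the Riemannian case, where $L\in L^q$ for all $q<\infty$ and (iii) holds. It therefore misses the square-root scaling of $d_\cc$, which is what makes the Carnot case worse than (iii). For the same reason, your closing remark on $p=2$ in (iii) needs the $d_\varepsilon$-version of this argument (Proposition~\ref{prop:RiemannContract}), not (ii).

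\textbf{Drop the ``degeneracy'' plan.} Do not try to show via $\bmu(\cdot\,|\,A)$ that the constant ``necessarily degenerates''. By Theorem~\ref{thm:t2-h-type}, $\bmu_1\in\Tc_2(\bbG,d_\cc,\alpha)$ does hold on H-type groups, so no such test can refute the implication itself. Statement (ii) is only a claim about the contraction principle.
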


The paper is structured as follows. \cref{sec:setting} contains preliminary results on Talagrand inequalities and introduces our setting of step-$2$ Carnot groups.
In \cref{sec:top-foellmer}, we present a direct approach to proving the $\Tc_2$ inequality for the law of Brownian motion on a Carnot group via F\"ollmer's intrinsic drift, and we show equality for the case of adapted transport plans.
In \cref{sec:top-riedel}, we prove the $\Tc_2$ inequality for general Gaussian rough paths by a direct approach using a contraction principle.
\cref{sec:heat-kernel} studies the Talagrand inequality for the heat kernel measure $\bmu_1$ on Carnot groups, as well as its connection to log-Sobolev inequalities and heat semigroup estimates. \cref{ss:limitPassageTalagrand} presents a bottom-up approach to proving the $\Tc_2$ inequality for $\bmu$ as a consequence of the results of \cref{sec:heat-kernel}.
In \cref{sec:projection-blow-up}, we show by example that we cannot project the $\Tc_2$ inequality for $\bmu$ down to a $\Tc_2$ inequality for $\bmu_1$, and that the cost functions $C_n$ blow up pointwise.
In \cref{sec:riemann}, we study a Riemannian approximation of the Heisenberg group, for which we can partially overcome the issues of the cost blow-up and failure of projection.
In \cref{sec:gamma-conv}, we prove the $\Gamma$-convergence of the costs $C_n$ to $C_{\mathcal{H}}$.
We conclude in \cref{sec:outlook} by commenting on the extension of our results to higher order Carnot groups.

\section{Setting}\label{sec:setting}
In this section, we first collect relevant definitions and results related to Talagrand inequalities. Next, we define our setting of step-$2$ Carnot groups and introduce a Brownian motion with paths in a Carnot group, as well as the lift and shift operation on paths. Using the shift operator, we define a suitable cost function $C_{\mathcal{H}}$, which appears in our $\Tc_2$ inequality.

\subsection{Preliminaries on Talagrand inequalities}\label{sec:prelim-t2}
Given two Borel  probability measures $\mu,\nu$ on a Polish space $E$, let $\Pi(\mu,\nu)$ denote the set of probability measures on $E\times E$ with marginals $\mu,\nu$. Such measures are called couplings (or transport plans). The relative entropy of \(\nu\) with respect to \(\mu\) is defined as
\[
	H(\nu \| \mu) = 
	\begin{cases} 
		\int_E \log \frac{\D\nu}{\D\mu} \di\nu & \text{if } \nu \ll \mu, \\
		+\infty & \text{otherwise}.
	\end{cases}
\]

\begin{definition}
	Let $E$ be a Polish space and let $c \colon E \times E \rightarrow [0,\infty]$ be a measurable function. We say that a Borel probability measure $\mu$ satisfies the \emph{cost-information inequality} on 
	$E$ with cost $c$, parameter $\alpha>0$, and exponent $p \in [1, \infty)$ if, for any Borel probability measure $\nu$ on $E$,
	\begin{align*}
		\T_{c, p}(\mu, \nu)  \le \sqrt{\frac{2}{\alpha} H (\nu \| \mu )}, 
		\quad
		\text{where} 
		\quad
		\T_{c, p}(\mu, \nu) \coloneqq \Bigl(\inf_{\lambda\in\Pi(\mu,\nu)} \int \int c^p(x, y) \di \lambda (x, y)\Bigr)^{\frac 1p}.
	\end{align*}
	We write $\mu \in \Tc_p (E, c,\alpha)$ and say that $\mu$ satisfies a $\Tc_p$ inequality.\footnote{Since $c^p$ is just another instance of a measurable function on $E \times E$, there is no loss of generality in taking $p=1$. However, we find this definition useful later in the paper.}
\end{definition}

In particular, we are interested in the case of $p = 2$. On $\R^d$, Talagrand \cite{Ta96a} proved that the standard Gaussian measure satisfies a $\Tc_2$ inequality with Euclidean cost. Talagrand's result has since been lifted to the Wiener measure on path space.

Let $B$ denote a standard Brownian motion on $\R^d$, let $\mu_1 = \Law(B_1)$ denote the standard Gaussian measure on $\R^d$, and let $\mu = \Law(B)$ denote the Wiener measure on $\Omega \coloneqq C_0([0, 1], \R^d)$.
The Cameron--Martin space for $\mu$ is defined as
\begin{equation}\label{eq:cameron-martin}
    \mathcal H \coloneqq \{\,h \colon [0, 1] \to \R^d \; \text{absolutely continuous} : \dot h \in L^2, \; h_0 = 0 \,\} = W^{1, 2}_0([0, 1], \R^d),
\end{equation}
and the Cameron--Martin norm $\|\cdot \|_{\mathcal{H}}$ is defined by $\|h\|_{\mathcal H}^2 = \int_0^1 |\dot h_t|^2 \di t$, for $h \in \mathcal H$. Throughout the text, for $p\in[1,\infty]$, $W^{1,p}([0,1], \R^d)$ denotes the usual Sobolev space, and 
$W^{1,p}_0([0,1], \R^d)$ the subspace such that $x_0 =0$ for $x \in W^{1,p}_0([0,1], \R^d)$.

Define the Cameron--Martin cost $c_\cH \colon \Omega \times \Omega \to [0, \infty]$ by
\begin{equation}\label{eq:BM-cost}
    c_\cH(x,y) \coloneqq
    \begin{cases}
        \|y - x\|_\cH, & y - x \in \cH,\\
        + \infty & \text{otherwise}.
    \end{cases}
\end{equation}
Then the Wiener measure $\mu$ satisfies the $\Tc_2$ inequality (cf.~\cite{FeUe02, DjGuWu04, FeUe04, Le13}):
\begin{equation}\label{eq:t2}
    \mu \in \Tc_2(\Omega, c_\cH, 1).
\end{equation}

\subsubsection*{Contraction principle}
The following contraction principle for $\Tc_p$ inequalities is a special case of \cite[Lemma 4.1]{riedel_transportationcost_2017}.
\begin{lemma}\label{lem:contraction}
    Let $(E, d)$, $(S, \rho)$ be metric spaces, with $(E, d)$ a Polish space, let $c \colon E \times E \to [0, \infty]$ and $\wt c \colon S \times S \to [0, \infty]$ be Borel-measurable functions, and let $\eta$ be a Borel probability measure on $E$. Let $\psi \colon E \to S$ and $L \colon E \to [0, \infty]$ be measurable functions such that
    \begin{equation*}
        \wt c(\psi(x), \psi(\ol x)) \leq L(x) c(x, \ol x),
    \end{equation*}
    for all $x, \ol x \in E_0$, where $E_0 \subseteq E$ satisfies $\eta(E_0) = 1$.
    
    Suppose that $\eta \in \Tc_2(E, c, \alpha)$, for some $\alpha \in (0, \infty)$. Then, for any $p \in [1, 2]$ such that $L \in L^q(\eta)$ for $q = \tfrac{2p}{2-p} \in [2, \infty]$, we have $\psi_{\sharp} \eta \in \Tc_p(S, \wt c, \alpha \|L\|_{L^q(\eta)}^{-2})$.
\end{lemma}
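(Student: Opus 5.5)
Fix $\nu\in\mathcal P(S)$ with $H(\nu\|\psi_\sharp\eta)<\infty$; otherwise there is nothing to prove. The plan is to lift $\nu$ to a measure $\ol\nu$ on $E$ with $\psi_\sharp\ol\nu=\nu$ and $H(\ol\nu\|\eta)=H(\nu\|\psi_\sharp\eta)$. Take $\frac{\D\ol\nu}{\D\eta}\coloneqq f\circ\psi$ with $f=\frac{\D\nu}{\D\psi_\sharp\eta}$. Then $\psi_\sharp\ol\nu=\nu$, and by change of variables
\[
H(\ol\nu\|\eta)=\int (f\log f)\circ\psi\,\D\eta=\int f\log f\,\D\psi_\sharp\eta=H(\nu\|\psi_\sharp\eta).
\]
Since $\eta\in\Tc_2(E,c,\alpha)$, for each $\varepsilon>0$ there is a coupling $\lambda\in\Pi(\eta,\ol\nu)$ with
\[
\int c^2\,\D\lambda\le \frac{2}{\alpha}H(\ol\nu\|\eta)+\varepsilon .
\]
In particular $\int c^2\,\D\lambda<\infty$. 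Because $\ol\nu\ll\eta$, both marginals of $\lambda$ put full mass on $E_0$, so $\lambda(E_0\times E_0)=1$. The pushforward $(\psi\times\psi)_\sharp\lambda$ is then a coupling in $\Pi(\psi_\sharp\eta,\nu)$.

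Next, estimate its $\wt c^{\,p}$-cost using the pointwise hypothesis $\lambda$-a.e., and apply Hölder with exponents $\frac{2}{2-p}$ and $\frac 2p$:
\[
\int \wt c^{\,p}\,\D(\psi\times\psi)_\sharp\lambda\le\int L(x)^p c(x,\ol x)^p\,\D\lambda
\le\Bigl(\int L^{\frac{2p}{2-p}}\,\D\eta\Bigr)^{\frac{2-p}{2}}\Bigl(\int c^2\,\D\lambda\Bigr)^{\frac p2}.
\]
Here $L$ depends only on the first coordinate, whose $\lambda$-marginal is $\eta$, so the first factor is $\|L\|_{L^q(\eta)}^{p}$ with $q=\frac{2p}{2-p}$. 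For $p=2$ one uses $L^\infty$ directly instead of Hölder. Taking $p$-th roots gives
\[
\T_{\wt c,p}(\psi_\sharp\eta,\nu)\le \|L\|_{L^q}\Bigl(\tfrac 2\alpha H(\nu\|\psi_\sharp\eta)+\varepsilon\Bigr)^{1/2}.
\]
Letting $\varepsilon\to0$, this is exactly $\T_{\wt c,p}\le\sqrt{\frac{2}{\alpha\|L\|_q^{-2}}H}$.

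The main obstacle I expect is measurability and null sets: checking that $\ol\nu$ is well defined, that the hypothesis only on $E_0$ suffices, and the conventions when $L=\infty$ or $c=\infty$. The key point is that $\ol\nu\ll\eta$ forces $\lambda$ to concentrate on $E_0\times E_0$. The conventions are harmless: a finite $\|L\|_q$ means $L<\infty$ a.e., and a finite $\int c^2\,\D\lambda$ means $c<\infty$ $\lambda$-a.e.
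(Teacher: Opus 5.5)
Your proof is correct and is essentially the standard argument behind \cite[Lemma 4.1]{riedel_transportationcost_2017}, which the paper cites rather than reproves. You lift $\nu$ to $E$ via the density $\frac{\D\nu}{\D\psi_\sharp\eta}\circ\psi$ so that the relative entropy is preserved (the same device the paper uses in \Cref{lem:abs-cont-lift} and \Cref{thm:t2-top-foellmer}), push forward a near-optimal coupling, and apply H\"older with exponents $\frac{2}{2-p}$ and $\frac{2}{p}$. Your treatment of the exceptional set (via $\overline{\nu}\ll\eta$, so that $\lambda(E_0\times E_0)=1$) and of the $0\cdot\infty$ conventions covers the technical points correctly.
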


\begin{remark}\label{rem:upgrade-topology}
	In particular, the contraction principle in \Cref{lem:contraction} allows us to upgrade the topology used in \cref{thm: main} from the uniform topology to the $\beta$-H\"older topology for $\beta \in (\frac{1}{3},\frac{1}{2})$ (cf.\ \cite[Section 8]{FrVi10} for the definition of this topology).
	Indeed, for $E = C_0([0, 1], \bbG)$ and $\bmu \in \Pc_2(E)$ the law of Brownian motion on $\bbG$, we have that $\bmu(\wt E) = 1$, where $\wt E = C^\beta_0([0, 1], \bbG)$ for some $\beta \in (\frac{1}{3}, \frac{1}{2})$. Thus, defining $\wt c = c|_{\wt E \times \wt E}$, the result of \Cref{thm: main} that $\bmu \in \Tc_2(E, c, \alpha)$ extends to $\bmu \in \Tc_2(\wt E, \wt c, \alpha)$ by \Cref{lem:contraction}.
	
	We remark that such a direct upgrade of the topology is not observed in other settings. For example, showing that a large deviation principle can be lifted from the uniform topology to the H\"older topology is significantly more involved; see \cite[Theorem 39]{FrVi05}, whose proof is based on the inverse contraction principle for large deviations \cite[Theorem 4.2.4]{DeZe10}.
\end{remark}

\subsubsection*{Adapted \texorpdfstring{$\Tc_p$}{Tp} inequalities}
For a metric space $(S, d)$, the $p$-Wasserstein distance $\T_{d, p}$ metrises the weak topology on $\Pc_p(S)$. When elements of $S$ should be regarded as stochastic processes, however, this topology is not sufficient to capture the flow of information encoded in the filtrations associated to the processes. The \emph{adapted weak topology} and \emph{adapted Wasserstein distance} have been shown to be more suitable; see, e.g.\ \cite{Al81,La18,BaBaBeEd19a,BaBaBeEd19b}. The adapted Wasserstein distance is a special case of the adapted (also called bicausal) optimal transport problem, defined as follows.

\begin{definition}\label{def:adapted-coupling}
	Let $E$ be a Polish space and $\mu, \nu \in \Pc(C([0, 1], E))$. Let $\lambda \in \Pi(\mu, \nu)$ and let $X$, $Y$ be $C([0, 1], E)$-valued random variables with $\lambda = \Law(X, Y)$. Write $\F^X$ (resp.\ $\F^Y$) for the completion of the natural filtration of $X$ (resp.\ $Y$) with respect to $\mu$ (resp.\ $\nu$). We say that $\lambda$ is an \emph{adapted coupling} if the following conditional independence holds under $\lambda$: for all $t \in [0, 1]$,
	\begin{align*}
		& \F^Y_t \; \text{is independent of} \; \F^X_1 \; \text{given} \;  \F^X_t \quad
		\text{and} \quad \F^X_t \; \text{is independent of} \; \F^Y_1 \; \text{given} \; \F^Y_t.
	\end{align*}
	We denote the set of all such couplings by $\Pi_\ad(\mu, \nu)$.
    For a measurable function $c \colon C([0, 1], E) \times C([0, 1], E) \to [0, \infty]$, define the \emph{adapted optimal transport problem}
    \begin{align*}
        \T^\ad_{c,p}(\mu, \nu) \coloneqq \Bigl(\inf_{\lambda\in\Pi_\ad(\mu,\nu)} \int \int c^p(x, y) \di\lambda (x, y)\Bigr)^{\frac 1p}.
    \end{align*}
    We say that $\mu \in \Pc(C([0, 1], E))$ satisfies an \emph{adapted $T_p$ inequality} for some $p \in [1, \infty)$ if there exists $\alpha > 0$ such that
    \begin{align*}
    	\T^\ad_{c, p}(\mu, \nu)  \le \sqrt{\frac{2}{\alpha} H (\nu \| \mu )}.
    \end{align*}
\end{definition}

In this adapted setting, \cite[Lemma 5]{La18} and \cite[Theorem 3]{Foe21} show that Wiener measure satisfies an adapted $\Tc_2$ inequality with $\alpha = 1$ and that equality holds; i.e.
\begin{align}\label{eq:adapted-t2-equality}
	\T^\ad_{c_\cH, 2}(\mu, \nu) = \sqrt{2 H(\nu \| \mu)}.
\end{align}

\begin{remark}
	For continuous-time stochastic processes, \cite{BaBePaScZh23} give an alternative definition of the adapted optimal transport problem and adapted Wasserstein distance that has additional desirable topological properties. The value of this problem is defined such that it lies between $\T_{c,p}$ and $\T^\ad_{c,p}$. Thus, it is immediate that an adapted $\Tc_2$ inequality still holds in this setting. However, equality has not been studied in this case, and we leave this to future work, choosing to focus on the definition given in \Cref{def:adapted-coupling} in the present paper.
\end{remark}

\begin{remark}
	For discrete-time processes taking values in some Polish space $E$ with $n$ time steps, one can also consider their laws, which are probability measures on $E^n$, and define an adapted optimal transport problem analogously to \Cref{def:adapted-coupling}. In this setting, \cite[Corollary 1.8]{Park25} shows that the $\Tc_1$ inequality is equivalent to its adapted counterpart. Moreover, \cite[Corollary 1.9]{Park25} shows that, for probability measures with finite exponential moment, an adapted $\Tc_p$ inequality holds for all $p > 1$, with constant given explicitly in terms of the exponential moment and number of time steps, thus extending the results of \cite{BoVi05} to the adapted setting. For a standard Gaussian on $\R^n$, \cite[Proposition 5.10]{BaBeLiZa17} prove an adapted $\Tc_2$ inequality using a dynamic programming argument. As noted in \cite[Remark 5.11]{BaBeLiZa17}, equality cannot generally be expected in the discrete-time setting.
\end{remark}

\subsection{Step-\texorpdfstring{$2$}{2} Carnot groups}\label{sec:carnot-step-2}

Let $\mathbb{G}$ be a step-$2$ Carnot group, i.e.\ a connected, simply connected
nilpotent Lie group whose Lie algebra $\mfg$ of left-invariant vector
fields has dimension $m = d_1 + d_2$ and admits a stratification
$\mfg=\mathcal{V}_1 \oplus \mathcal{V}_2$ with $\mathcal{V}_2
= [\mathcal{V}_1, \mathcal{V}_1]$, $[\mathcal{V}_1, \mathcal{V}_2] = \{ 0 \}$.
Fix an adapted basis $(V_1, \ldots, V_m)$ such that $(V_1, \ldots, V_{d_1})$ is
a basis of $\mathcal{V}_1$. Using exponential coordinates, we can and will
identify $\mathbb{G}$ with $\mathbb{R}^m$,
\[
\mathbb{G} \ni \bx = (x_1, \ldots, x_{d_1}, x_{d_1 + 1}, \ldots, x_m) =
(x^{(1)}, x^{(2)}) \in \mathbb{R}^{d_1} \oplus \mathbb{R}^{d_2} \cong \mathbb{R}^m,
\]
with group law in Baker--Campbell--Hausdorff form,
\begin{equation}
\label{eq:BakCamHauFormula}
(\bx,\by)
\mapsto 
\bx\by=\bx+\by+\frac12[\bx,\by].
\end{equation}
It is not restrictive to assume that $V_i (0) = e_i$, the canonical basis vectors
of $\mathbb{R}^m$. For $\bx \in \bbG$, let $\ell_{\bx}\colon \bbG\to\bbG$ denote the left multiplication map defined by $\ell_{\bx}\by=\bx \by$, for $\by\in \bbG$, and let $\D \ell_{\bx}\colon T\bbG\to T\bbG$ denote its differential. By left invariance, $V_i (\bx) = \D \ell_{\bx} e_i$, $i = 1, \ldots, m$, $\bx \in \bbG$.

Endow $\mfg$ with a left-invariant metric $\langle\cdot,\cdot\rangle \colon \mfg\times\mfg\to \R$ that makes the $V_i$ orthonormal.
Define the \emph{structure constants} $\sco_{ij} \in \R^{d_2}$, for $i, j \in \{1, \dotsc, d_1\}$, by
$\sco_{ij}^k \coloneqq \langle [V_i, V_j], V_k \rangle = - \sco_{ji}^k$, for $k \in \{d_1{+}1, \dotsc, m\}$;
cf.\ \cite[Section 3.2]{BoLaUg2007SLGP}.
The group law in \eqref{eq:BakCamHauFormula} 
can then be written as
\begin{equation*}
\begin{gathered}
(\bx,\by)=((x^{(1)}, x^{(2)}), \, (y^{(1)}, y^{(2)}))
\mapsto \bx \by = \Big( x^{(1)} + y^{(1)}, \,
x^{(2)} + y^{(2)} + \frac12\sum_{i< j}\sco_{ij}(x^{(1)}_i
y^{(1)}_j-x^{(1)}_j
y^{(1)}_i)\Big).
\end{gathered}
\end{equation*}
For notational brevity, we introduce
the operator $\SCO\colon\R^{d_1\times d_1}\to \R^{d_2}$ given in terms of the structure constants by
\begin{equation*}
\SCO A= \sum_{i,j=1}^{d_1}w_{ij} A_{ij} =
\frac{1}{2}\sum_{i,j=1}^{d_1}w_{ij} (A_{ij}-A_{ji}) 
= \sum_{i<j}w_{ij} (A_{ij}-A_{ji}),\quad A\in\R^{d_1\times d_1}.
\end{equation*}
With this definition, we can rewrite the group law
for  $\bx=(x^{(1)}, x^{(2)})$, $\by=(y^{(1)}, y^{(2)})$ as 
\[
\bx\by = \Big( x^{(1)} + y^{(1)}, \,
x^{(2)} + y^{(2)} + \frac12 \SCO (x^{(1)}\otimes y^{(1)})\Big).
\]
Let $\Delta_\bbG \coloneqq \tfrac{1}{2} \sum_{i=1}^{d_1} V_i^2$ denote the sub-Laplacian on $\bbG$ and define the
horizontal gradient $\nabla_{\mathbb{G}}$ by its action
\[
\nabla_{\mathbb{G}} f \coloneqq \sum_{i = 1}^{d_1} (V_i f) V_i \in \mathcal{V}_1, \quad \text{for} \; f \colon \bbG \to \R.
\]
Let $H\mathbb{G} \subset T\mathbb{G}$ be the horizontal tangent bundle of
the group $\mathbb{G}$, i.e.\ the left-invariant sub-bundle of the tangent bundle
$T\mathbb{G}$ such that $H_{\boldsymbol{e}} \mathbb{G} = \{\, V(0) : V \in \mathcal{V}_1 \,\}$, where $\boldsymbol{e}$ is the identity element of $\bbG$. For $i \in \{1, 2\}$, define the projection operator $\pi_i \colon \bbG \to \R^{d_i}$ by
\begin{equation}\label{eq:projection}
	\pi_i (x^{(1)}, x^{(2)}) = x^{(i)}.
\end{equation} 
Dilation on $\mathbb{G}$ by a factor $s>0$ takes the form
\begin{equation}\label{eq:dilation}
\delta_{s} ((x^{(1)}, x^{(2)})) \coloneqq (s x^{(1)}, s^2 x^{(2)}).
\end{equation}
The Haar measure on $\mathbb{G}$ coincides with Lebesgue measure $\mathcal{L}^m$ on $\mathbb{R}^m$.
For measurable $E \subseteq \mathbb{R}^m$, we have
\begin{equation}\label{eq:homogeneity}
	\mathcal{L}^m (\delta_{s} E) =
s^Q \mathcal{L}^m (E),
\end{equation}
where $Q = d_1 + 2 d_2$ is called the \emph{homogeneous
dimension} of $\mathbb{G}$.

We endow $\mathbb{G}$ with the Carnot--Carath\'eodory structure induced by
$H\mathbb{G}$, as follows. An absolutely continuous curve $\gamma \colon [0, 1] \rightarrow \mathbb{G}$
is called horizontal if $\dot{\gamma}_t \in H_{\gamma_t} \mathbb{G}$ for almost every $t \in [0, 1]$.
The \emph{Carnot--Carathéodory distance} between $x, y \in \mathbb{G}$ is then defined as
\begin{align}\label{eq:dcc}
d_{\cc} (x, y) = \inf \biggl\{\, \int_0^1 | \dot{\gamma}_t | \di t :
\gamma \text{ horizontal}, \gamma_0 = x, \gamma_1 = y \,\biggr\},
\end{align}
where $|\cdot| = \sqrt{\langle\cdot,\cdot\rangle}$.

We remark that horizontal paths necessarily satisfy, for almost every $t \in [0, 1]$,
\begin{equation}\label{eq:proj1}
	\dot{\gamma}_t = \sum_{i = 1}^{d_1} V_i(\gamma_t) \langle
	\dot{\gamma}_t, V_i(\gamma_t )\rangle \eqqcolon \sum_{i = 1}^{d_1}
	V_i(\gamma_t) \dot{h}^i_t,
\end{equation}
and hence are in one-to-one correspondence with absolutely continuous
$h\in \AC([0, 1],\mathbb{R}^{d_1})$. We have that $h=\pi_1\gamma$ for the first-level projection $\pi_1$ from \cref{eq:projection}.
\begin{definition}[Canonical lift]
\label{def:canonicalLiftH}
	The \emph{canonical lift} $\Psi \colon \AC([0, 1],\mathbb{R}^{d_1}) \to C([0, 1], \bbG)$ is defined by $\Psi(h) \coloneqq \gamma$, where $\gamma \in C([0, 1], \bbG)$ and $h \in \AC([0, 1],\mathbb{R}^{d_1})$ are related by $\eqref{eq:proj1}$.
	Explicitly, we have
	\[
		\dot{\gamma}^{(1)}_t = \dot{h}_t, \quad \dot{\gamma}^{(2)}_t =
		\frac{1}{2} \sum_{i,j} \sco_{ij} h^i_t \dot{h}^j_t =
		\frac{1}{2}\SCO \big(h_t\otimes \dot{h}_t\big), \quad \text{for} \; t \in [0, 1].
	\]    
\end{definition}

By the Chow--Rashevskii theorem,  $d_\cc$ is in fact a distance, which is also left-invariant and homogeneous with respect to the dilations defined in \eqref{eq:dilation}. The metric space $(\bbG, d_\cc)$ is a Polish and geodesic space (see, e.g.\ \cite[Section 2.4]{AS2019HEFC}). We let $|\cdot|_\bbG$ denote the norm induced by $d_\cc$ on $\bbG$.
One can also equip $\bbG$ with the \emph{gauge distance} $d_g$ defined by
\begin{align*}
	d_g(x,y) = |(y^{-1}x)^{(1)}| + |(y^{-1}x)^{(2)}|^{\frac 12},
\end{align*}
for $x, y \in \bbG$. All homogeneous norms on $\bbG$ are equivalent. In particular, there exists a constant $\kappa \in (0, \infty)$ such that
\begin{align}\label{eq:gauge-distance}
	\frac1\kappa d_g(x,y) \leq d_\cc(x,y) \leq \kappa\, d_g(x,y);
\end{align}
see, e.g.\ \cite[Proposition 5.1.4]{BoLaUg2007SLGP}.

\begin{remark}[Metric derivative]\label{rem:metric-derivative}
The metric derivative of a curve $\gamma \colon [0,1]\to \bbG$ at $t \in [0, 1]$ is defined by
\begin{equation*}
	|\dot \gamma_t|_{d_\cc} \coloneqq \lim_{s\to t}\frac{d_\cc(\gamma(s),\gamma(t))}{|s-t|}.
\end{equation*}
If $\gamma$ is absolutely continuous, the metric derivative exists for almost every $t\in[0,1]$, and $|\dot \gamma_t|_{d_\cc} = |\dot \gamma_t|$; see \cite[Theorem 1.3.5]{Mont2001DBSM}.
Moreover, the metric derivative is minimal in the sense that $|\dot \gamma_t|_{d_\cc}\leq m$
for all $m\in L^1([0,1])$ with $d_\cc(\gamma(s),\gamma(t))\leq \int_s^t m(r)\di r$, $0\leq s<t\leq 1$; see  \cite[Theorem 1.1.2]{AGS}.
\end{remark}

\begin{remark}[Free step-2 Carnot groups]
The free step-$2$ nilpotent case $\mathbb{G} = \mathbb{F}^{d_1, 2}$ amounts to
$\mathbb{G} \cong \mathbb{R}^{d_1} \oplus \mathfrak{so}(d_1)$ (after identifying the exterior algebra $\wedge^2 \R^{d_1}$ with $\mathfrak{so}(d_1)$). The space $\mathfrak{so}(d_1)$ is
spanned by $\{\, e_{[i,j]} : 1 \le i < j \le d_1 \,\}$, where
$e_{[i,j]} \coloneqq \frac{1}{2}(e_i \otimes e_j - e_j \otimes e_i)$, and has dimension $d_2^\ast = d_1(d_1{-}1)/2$. 
Writing the bracket as $[e_i,e_j] = e_{[i,j]}$, the structure constants $w_{ij}^{[p,q]}$ 
reduce to Kronecker symbols.

All other step-$2$ Carnot groups can be seen as quotient groups of the free group, captured by
$d_2 \le d_2^\ast$ and the structure constants. For instance, the $(2n+1)$-dimensional Heisenberg group
$\mathbb{H}^{n} \cong \mathbb{R}^{2n} \oplus \mathbb{R}$ has $d_1 = 2n$, $d_2 = 1$, and
$\sco_{1,2} = \sco_{3,4} = \cdots = \sco_{2n-1, 2n} = 1$ (flip sign upon interchanging indices, zero otherwise).
For $d_1 = 2$, we recover the familiar example $\bbH = \bbH^1 \cong \mathbb{F}^{2,2}$. Letting $(x,y,z) \in \mathbb{H}$ denote a canonical element of $\mathbb H$, the left-invariant vector fields are given by
\[
	V_1 =\partial_{x}+\tfrac{1}{2}y\partial_{z}, \quad V_2 =\partial_{y}-\tfrac{1}{2}x\partial_{z}, \quad
	V_3 = [V_1,V_2] = \partial_{z}.
\]
\end{remark}

\begin{remark}[Heisenberg-type groups]
\label{rem:HtypeGroups}
A special class of step-$2$ Carnot groups is the class of \emph{Heisenberg-type groups}, or H-type groups for short, which enjoy additional properties.
Most importantly for us, Talagrand inequalities are known to hold on H-type groups; see Section \ref{sec:t2-h-type}.
We refer the interested reader to \cite[Chapter 18]{BoLaUg2007SLGP}.

A step-$2$ Carnot group $\bbG \cong \R^{d_1} \oplus \R^{d_2}$ is an \emph{H-type group} if, for each $z \in \R^{d_2}$, there exists a linear map $\sfJ_\bbG(z) \colon \R^{d_1} \to \R^{d_1}$ such that
\begin{equation*}
	\sfJ_\bbG({z})^2 = - |z|^2 \id
\quad\text{and}\quad 
	\big\langle \SCO (x{\otimes} y), 
    z \big\rangle_{\R^{d_2}} = \langle \sfJ_\bbG({z)} x, y \rangle_{\R^{d_1}}\quad\forall\,
    x, y \in \R^{d_1}.
\end{equation*}

Note that, necessarily, $d_1 \in 2\N$ and $d_2 \leq d_1/2$. As the name suggests, the Heisenberg group $\mathbb{H}^{n} \cong \mathbb{R}^{2n} \oplus \mathbb{R}$ is the canonical example of an H-type group, where the map $\sfJ_{\bbH^n}\colon\R^{2n} \to \R^{2n}$
is given by 
\[
	\sfJ_{\bbH^n}(z) =
	\begin{pmatrix}
	0&-zI_n\\
	z I_n& 0
	\end{pmatrix},
	\quad z \in \R.
\]
\end{remark}

\subsubsection*{Brownian motion on \texorpdfstring{$\bbG$}{G}}
Let $B = (B_t)_{t \in [0, 1]}$ be a $d_1$-dimensional Brownian motion on a filtered probability space $(\Omega, \F, (\F_t)_{t \geq 0}, \P)$. We define the Brownian motion $\bB$ on $\mathbb{G}$ as the continuous
$\mathbb{G}$-valued Markov process with generator $\Delta_\bbG$, that is obtained by solving the SDE
\begin{align}\label{eq:BM-in-G}
\D \bB_t = \sum_{i = 1}^{d_1} V^i(\bB_t) \, \D B^i_t,
\end{align}
more explicitly written as
\[
\D \bB^{(1)}_t = \D B_t, \quad
\D \bB^{(2)}_t =\frac{1}{2}\SCO (B_t\otimes \D B_t).
\]
Note that, since $B = (B_1, \ldots, B_{d_1})$ is a standard Brownian motion, there is
no difference between It\^o and Stratonovich integration here.

The Brownian motion $\bB$ takes values in the space 
\begin{equation*}
\bOmega_{\bbG} \coloneqq C_{0}([0,1],\mathbb{G})
\end{equation*} of continuous $\mathbb{G}$-valued paths started from the origin. We write $\bmu = \Law(\bB)$ and $\mu = \Law(B)$, and $\bmu_t = \Law(\bB_t)$, $\mu_t = \Law(B_t)$, for $t > 0$. For a path $\bomega \in \bOmega_\bbG$, let $\bomega_{s,t} = \bomega_s^{-1}\bomega_t$, $s \leq t$, denote its increments.

We equip the space $\bOmega_\bbG$ with the uniform topology induced by the metric $d_\infty$ defined by
\begin{align}\label{eq:unif-metric}
    d_\infty(\bomega, \ol \bomega) \coloneqq \sup_{t \in [0, 1]}d_\cc(\bomega_t, \ol \bomega_t).
\end{align}
Note that $(\bOmega_\bbG, d_\infty)$ is a Polish space.

By H\"ormander's theorem, $\Delta_\bbG$ is a hypoelliptic operator, and so the associated heat kernel 
$\mathfrak{p} \colon (0, \infty) \times \bbG \to (0, \infty)$ is smooth \cite{H1967HSOD,K1973POH,H2011MPHT,BB2015PHTS}. Note that, for all $t > 0$, the density of $\bmu_t$ is $\mathfrak{p}_t \colon \bbG \to (0, \infty)$. We also define the heat semigroup $P_t = e^{t\Delta_\bbG}$, for $t > 0$, by
\begin{align}\label{eq:semigroup}
	P_t f (\bx) = \int_\bbG f(\bx\by^{-1}) \mathfrak{p}_t(\by) \di \by =\int_\bbG f(\by) \mathfrak{p}_t(\by^{-1} \bx) \di \by, \quad \bx \in \bbG,
\end{align}
for any $f \in L^1(\bmu_t)$, with $P_0$ equal to the identity operator.

\subsubsection*{Shifting \texorpdfstring{$\bbG$}{G}-valued paths}
Let $\AC_0([0, 1], \R^d)$ denote the space of absolutely continuous curves started from the origin, and recall the canonical lift $\Psi \colon \AC_0([0,1],\R^{d_1}) \to \bOmega_\bbG$ from \cref{def:canonicalLiftH}.
Since $(\mathbb{G}, d_{\cc})$ is a geodesic space, the following approximation lemma is immediate (cf.\ \cite[Lemma 5.19, Theorem 7.32]{FrVi10}). 

\begin{lemma}[Geodesic approximations]\label{lem:geo-approx}
Every continuous $\mathbb{G}$-valued path $\bomega$ on $[0, T]$ is the uniform limit
of absolutely continuous horizontal curves; i.e.\ 
$\bomega^n = \Psi(\omega^n)$, with $\omega^n\in\AC_0([0, 1],\mathbb{R}^{d_1})$ and $\Psi$ defined in Definition~\ref{def:canonicalLiftH}.
\end{lemma}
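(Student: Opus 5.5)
The plan is to build the approximating horizontal curves by piecewise geodesic interpolation on dyadic partitions, using that $(\bbG,d_\cc)$ is geodesic. Fix $\bomega \in \bOmega_\bbG$ and $n\in\N$, and set $t_k^n = k2^{-n}T$ for $k=0,\dots,2^n$. For each $k$ choose a horizontal geodesic $\gamma^{n,k}\colon[t_{k-1}^n,t_k^n]\to\bbG$ from $\bomega_{t_{k-1}^n}$ to $\bomega_{t_k^n}$. Such a geodesic exists by the Chow--Rashevskii theorem and the geodesic property recalled above, and we parametrise it at constant speed $d_\cc(\bomega_{t_{k-1}^n},\bomega_{t_k^n})/(t_k^n-t_{k-1}^n)$. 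Concatenating these pieces gives a continuous curve $\bomega^n$ with $\bomega^n_0=\bomega_0=\boldsymbol{e}$. It is absolutely continuous (indeed Lipschitz) and horizontal on each subinterval, hence on $[0,T]$.

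Next we identify $\bomega^n$ as a canonical lift. By \eqref{eq:proj1}, a horizontal curve is determined by $h=\pi_1\bomega^n$, which lies in $\AC_0([0,T],\R^{d_1})$. The curves $\bomega^n$ and $\Psi(h)$ satisfy the same ODE from \cref{def:canonicalLiftH} with the same starting point $\boldsymbol{e}$. By uniqueness of solutions (the second level is given by an explicit integral), $\bomega^n=\Psi(\omega^n)$ with $\omega^n\coloneqq h$.

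For uniform convergence, take $t\in[t_{k-1}^n,t_k^n]$. Since $\bomega^n$ runs along a geodesic between the two endpoints, the triangle inequality gives
\begin{align*}
d_\cc(\bomega^n_t,\bomega_t) \le d_\cc(\bomega^n_t,\bomega_{t_{k-1}^n}) + d_\cc(\bomega_{t_{k-1}^n},\bomega_t) \le 2\,\varpi(2^{-n}T),
\end{align*}
where $\varpi$ is the modulus of continuity of $\bomega$ with respect to $d_\cc$. Because $[0,T]$ is compact, $\bomega$ is uniformly continuous and $\varpi(\delta)\to0$ as $\delta\to0$. Hence $d_\infty(\bomega^n,\bomega)\to0$.

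The only point needing care is the existence of geodesics, that is, that minimisers in \eqref{eq:dcc} are attained. This follows from local compactness and completeness of $(\bbG,d_\cc)$ via the Hopf--Rinow theorem for length spaces, and the paper already records this as the geodesic property. Even without exact geodesics, one can use horizontal curves of length at most $d_\cc+2^{-n}$, and the same estimate holds with an additional $o(1)$ error.
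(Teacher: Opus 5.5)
Your proposal is correct and follows the paper's approach. The paper only notes that $(\mathbb{G}, d_\cc)$ is geodesic and cites Friz--Victoir (Lemma 5.19, Theorem 7.32), which is exactly uniform convergence of piecewise geodesic interpolations as the mesh vanishes; your identification of the interpolant as the canonical lift $\Psi(\pi_1\bomega^n)$, the $2\varpi(2^{-n}T)$ modulus-of-continuity bound and the near-geodesic fallback supply the details left to that reference.
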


We now extend the canonical lift to a lift map on the space $\Omega = C_0([0, 1], \R^{d_1})$ of continuous curves started from the origin.
Note that the geodesic approximation of $\bomega = (\bomega^{(1)}, \bomega^{(2)})$ from \cref{lem:geo-approx} depends on both $\bomega^{(1)}$ and $\bomega^{(2)}$. Thus we also introduce an approximation based only on $\bomega^{(1)}$ in order to extend the canonical lift.
For a continuous path $\omega \in \Omega$, let $\wh \omega^n \in \AC_0([0,1],\R^{d_1})$ denote the piecewise-linear approximation of $\omega$ on the dyadic grid $(k 2^{-n})_{k \in \{0, \dotsc, 2^n\}}$, for $n \in \N$, and note that $\wh \omega^n \to \omega$ with respect to the uniform topology on $\Omega$.
\begin{definition}(Lift)\label{def:lift}
	Extend the canonical lift $\Psi \colon \AC_0([0, 1], \R^{d_1}) \to \bOmega_\bbG$ to the \emph{lift map} $\Psi \colon \Omega \to \bOmega_\bbG$ by	\begin{align}\label{eq:lift}
		\Psi (\omega)\coloneqq
		\begin{cases}
            \lim_{n\to\infty}\Psi(\wh{\omega}^n), &\text{if the limit exists},\\
		    0,&\text{otherwise},
		\end{cases}
	\end{align}
	for $\omega \in \Omega$. Define the \emph{domain} of $\Psi$ as $\dom(\Psi) \coloneqq \{\,\omega \in \Omega \; : \; \lim_{n \to \infty} \Psi(\wh \omega^n) \; \text{exists}\,\} \subset \Omega$.
\end{definition}
The following statement holds by a minor modification to the proof of \cite[Corollary 13.19]{FrVi10} in the general step-2 Carnot setting.

\begin{proposition}\label{prop:BM-lift}
    Let $B$ be a $d_1$-dimensional Brownian motion. Then $\boldsymbol{B}$ as defined in \cref{eq:BM-in-G} satisfies $\boldsymbol{B}=\Psi(B)=\lim_{n\to\infty}\Psi(\wh B^{n})$ almost surely. In particular, it follows that $\mu(\dom (\Psi))=1$.
\end{proposition}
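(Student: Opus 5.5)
We want to show that the piecewise-linear lifts $\Psi(\wh B^n)$ converge almost surely, uniformly on $[0,1]$, to the solution $\bB$ of \eqref{eq:BM-in-G}. The first level needs no work: $\pi_1\Psi(\wh B^n)=\wh B^n\to B=\bB^{(1)}$ uniformly, by continuity of Brownian paths. The real task is the second level. Since $\SCO$ is a fixed linear map $\R^{d_1\times d_1}\to\R^{d_2}$, it suffices to prove the result in the free case and then apply $\SCO$.

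For the second level, \cref{def:canonicalLiftH} gives
\[
\Psi(\wh B^n)^{(2)}_t=\tfrac12\SCO\Bigl(\int_0^t \wh B^n_s\otimes \D\wh B^n_s\Bigr).
\]
We compare this with
\[
\bB^{(2)}_t=\tfrac12\SCO\Bigl(\int_0^t B_s\otimes \circ\D B_s\Bigr),
\]
which is also the Itô integral because $\SCO$ kills the symmetric part. Evaluating both at dyadic times $t_k=k2^{-n}$ reduces the problem to showing that the dyadic Riemann-type sums converge to the Lévy area $\mathrm{Anti}\int B\otimes \D B$.

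\textbf{Dyadic points.} On each dyadic interval the approximant $\wh B^n$ is linear, so its own area contribution is zero. Hence
\[
\mathrm{Anti}\int_0^{t_k}\wh B^n\otimes \D\wh B^n=\sum_{j<k}\mathrm{Anti}\bigl(B_{t_j}\otimes B_{t_j,t_{j+1}}\bigr).
\]
The difference with the Itô area at $t_k$ is then a sum of increments $\sum_{j<k}\mathrm{Anti}\,\mathbb{B}_{t_j,t_{j+1}}$ of the Lévy area. These are independent and centred, each with variance of order $2^{-2n}$. Doob's $L^2$ maximal inequality therefore gives
\[
\mathbb{E}\Bigl[\max_k|\cdot|^2\Bigr]\lesssim 2^{-n}.
\]
This bound is summable in $n$, so Borel–Cantelli yields almost sure uniform convergence along the dyadic points.

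\textbf{Between grid points.} For $t$ inside a dyadic interval of level $n$, both processes move by at most a term controlled by
\[
\sup_{|t-s|\le 2^{-n}}\Bigl(\,|B_{s,t}|\,\|B\|_\infty+|\mathbb{B}_{s,t}|\Bigr).
\]
The first term tends to zero almost surely by uniform continuity of $B$. For the area increments we use their Gaussian-chaos moment bounds together with a union bound over the $2^n$ intervals, or alternatively Kolmogorov/Garsia–Rodemich–Rumsey Hölder regularity of the enhanced path. Either way this contribution also tends to zero almost surely.

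I expect this uniform modulus control of the area between grid points to be the main obstacle. The cleanest route is to cite the homogeneous Hölder regularity of Brownian rough paths, as in the proof of \cite[Corollary 13.19]{FrVi10}, which transfers verbatim to a general step-$2$ Carnot group after applying $\SCO$.

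Combining the two levels, $\Psi(\wh B^n)\to\bB$ in $d_\infty$ almost surely. In particular the limit exists on a set of full measure, so $\mu(\dom(\Psi))=1$ and $\bB=\Psi(B)$ almost surely.
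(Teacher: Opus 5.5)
Your argument is correct, but it does not follow the paper's route. The paper gives no self-contained proof of this proposition. It defers to a minor modification of the proof of \cite[Corollary 13.19]{FrVi10}, i.e.\ to the rough-path treatment of piecewise linear approximations of enhanced Brownian motion, carried over to a general step-$2$ group through the linear map $\SCO$.

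You instead prove directly the uniform convergence that Definition~\ref{def:lift} requires. At dyadic times the discrepancy is exactly $\sum_{j<k}\mathrm{Anti}\,\mathbb{B}_{t_j,t_{j+1}}$, a sum of independent centred terms with variance $O(2^{-2n})$. Combined with Doob's inequality and Borel--Cantelli, this step is clean and complete, and it is self-contained and elementary. The cited argument is heavier, relying on uniform moment bounds for the lifted approximations and on interpolation. In exchange it gives convergence in a stronger H\"older/variation-type rough path metric, which is not needed here.

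Two small remarks on your write-up.

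First, the step you flag as the main obstacle is actually trivial. Since $\bB^{(2)}$ is a continuous process on the compact interval $[0,1]$, $\sup_{t\in[t_j,t_{j+1}]}|\bB^{(2)}_t-\bB^{(2)}_{t_j}|$ is bounded by its modulus of continuity at scale $2^{-n}$. That modulus tends to zero almost surely, so no Gaussian-chaos bounds or Garsia--Rodemich--Rumsey arguments are needed.

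Second, the final passage from coordinatewise uniform convergence to convergence in $d_\infty$ deserves one line. It follows from \eqref{eq:gauge-distance} together with $|(\by^{-1}\bx)^{(2)}|\le|x^{(2)}-y^{(2)}|+C|y^{(1)}|\,|x^{(1)}-y^{(1)}|$, which holds because $\SCO(y^{(1)}\otimes y^{(1)})=0$, and the fact that $\|B\|_\infty<\infty$.
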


Having defined the lift $\Psi$ for general curves $\omega\in C_0 ([0,1],\R^{d_1})$, we can now formulate the following lemma relating absolutely continuous measures $\bnu\ll\bmu$ on $\bOmega_\bbG$ with absolutely continuous measures $\nu \ll \mu$ on $\Omega$.

\begin{lemma}\label{lem:abs-cont-lift}
	Let $\bnu$ be a Borel probability measure on $\bOmega_\bbG$. Then $\bnu \ll \bmu$ if and only if there exists a Borel probability measure $\nu$ on $\Omega$ such that $\nu \ll \mu$ and $\bnu = \Psi_\sharp \nu$.
\end{lemma}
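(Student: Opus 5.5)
We prove the two implications separately. Both rest on \cref{prop:BM-lift}, which says $\bmu = \Psi_\sharp \mu$, and on the observation that the first-level projection undoes the lift. Write $\Pi_1 \colon \bOmega_\bbG \to \Omega$ for the map $\bomega \mapsto \pi_1 \circ \bomega$. It is continuous, hence Borel, because $|\pi_1(\bx^{-1}\by)| \le d_g(\bx,\by) \le \kappa\, d_\cc(\bx,\by)$ by \eqref{eq:gauge-distance}. For $\omega \in \dom(\Psi)$ we have $\Pi_1 \Psi(\wh\omega^n) = \wh\omega^n \to \omega$ uniformly. Passing to the limit gives $\Pi_1\Psi(\omega) = \omega$ on $\dom(\Psi)$, and this set has full $\mu$-measure.

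\emph{Reverse implication ($\Leftarrow$).} Suppose $\nu \ll \mu$ and $\bnu = \Psi_\sharp \nu$. First we check that $\Psi$ is Borel. On $\dom(\Psi)$ it is the pointwise limit of the maps $\omega \mapsto \Psi(\wh\omega^n)$, and each of these is continuous: it depends only on finitely many point evaluations, through an explicit polynomial formula for the lift of a piecewise-linear path. The set $\dom(\Psi)$ itself is Borel, being the set where a sequence of continuous maps into a Polish space is Cauchy. So $\Psi$ is Borel. Now let $A$ be Borel with $\bmu(A) = 0$. Then $\mu(\Psi^{-1}(A)) = \bmu(A) = 0$, so $\nu(\Psi^{-1}(A)) = 0$, that is, $\bnu(A) = 0$.

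\emph{Forward implication ($\Rightarrow$).} Suppose $\bnu \ll \bmu$ and set $\nu \coloneqq (\Pi_1)_\sharp \bnu$. Absolute continuity is immediate. If $\mu(N) = 0$, then $\bmu(\Pi_1^{-1}(N)) = \mu(\Psi^{-1}\Pi_1^{-1}(N))$. This equals $\mu(N)$ up to the null set $\dom(\Psi)^c$, since $\Pi_1\Psi = \id$ on $\dom(\Psi)$; hence it is $0$. Therefore $\bnu(\Pi_1^{-1}(N)) = 0$, i.e.\ $\nu(N) = 0$.

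The main step is to show $\Psi_\sharp\nu = \bnu$. This amounts to showing that $\bnu$ is concentrated on the set
\[
G \coloneqq \{\,\bomega \in \bOmega_\bbG : \Pi_1\bomega \in \dom(\Psi),\ \Psi(\Pi_1\bomega) = \bomega \,\},
\]
because then $\Psi\circ\Pi_1 = \id$ $\bnu$-a.s. To see that $G$ is Borel, note it is defined by a Borel condition: equality of two Borel maps into a Polish space, restricted to a Borel set. By \cref{prop:BM-lift}, $\bmu(G) = \mu(\{\omega : \omega \in \dom(\Psi),\ \Psi(\Pi_1\Psi\omega) = \Psi\omega\}) = 1$, again using $\Pi_1\Psi = \id$ on $\dom(\Psi)$. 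Thus $\bmu(G^c) = 0$, and absolute continuity gives $\bnu(G^c) = 0$. Consequently, for Borel $A$,
\[
\Psi_\sharp\nu(A) = \bnu\bigl((\Psi\circ\Pi_1)^{-1}(A)\bigr) = \bnu(A \cap G) = \bnu(A).
\]
This completes the forward implication. The only delicate points in the whole argument are the measurability of $\Psi$ and of $\dom(\Psi)$, which were handled above.
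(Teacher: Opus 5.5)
Your proof is correct. The reverse implication coincides with the paper's, and you additionally check that $\dom(\Psi)$ and $\Psi$ are Borel, which the paper uses tacitly. The forward implication takes a different route.

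The paper defines $\nu$ through its density, $\frac{\D\nu}{\D\mu} \coloneqq \brho\circ\Psi$ with $\brho = \frac{\D\bnu}{\D\bmu}$. It then gets both $\nu(\Omega)=1$ and $\Psi_\sharp\nu=\bnu$ from the change of variables $\int f\circ\Psi\,\D\mu=\int f\,\D\bmu$, i.e.\ from $\bmu=\Psi_\sharp\mu$ alone, with no need for a left inverse of $\Psi$. This is shorter, and it delivers exactly the density identity used afterwards to obtain $H(\bnu\|\bmu)=H(\nu\|\mu)$ in \cref{thm:t2-top-foellmer}.

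You instead take $\nu\coloneqq(\Pi_1)_\sharp\bnu$ and use $\Pi_1\circ\Psi=\id$ on $\dom(\Psi)$ to show that $\bnu$ is carried by $G$. This identifies $\nu$ intrinsically as the law of the first-level path $\pi_1\bY$ when $\bnu=\Law(\bY)$. It also gives uniqueness for free: any $\nu'\ll\mu$ with $\Psi_\sharp\nu'=\bnu$ satisfies $\nu'(\dom(\Psi)^c)=0$, hence $\nu'=(\Pi_1\circ\Psi)_\sharp\nu'=(\Pi_1)_\sharp\bnu$. So the two constructions produce the same measure, and the paper's density follows in one line from $\nu(N)=\int_{\Pi_1^{-1}(N)}\brho\,\D\bmu=\int_N\brho\circ\Psi\,\D\mu$.

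A further by-product of your route is that your set $G$ equals $\Psi(\Omega)$. Your argument therefore shows that $\Psi(\Omega)$ is a Borel set of full $\bmu$-measure, which the paper invokes without commenting on its measurability.
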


\begin{proof}
	First suppose that there exists $\nu \ll \mu$ with $\bnu = \Psi_\sharp \nu$. Then, for any Borel $A \subseteq \bOmega_\bbG$ with $\bmu(A) = 0$, we have that $\mu(\Psi^{-1}(A)) = \bmu(A) = 0$, and so
	\begin{align*}
		\bnu(A) = \nu(\Psi^{-1}(A)) = 0.
	\end{align*}
	
	Now suppose that $\bnu \ll \bmu$.
	Let $\brho\coloneqq\frac{d\bnu}{d\bmu}$ and define the measure $\nu$ via $\frac{d\nu}{d\mu}=\brho \circ \Psi$ so that $\nu \ll \mu$.
	By \Cref{prop:BM-lift}, $\bmu(\bOmega_\bbG \setminus \Psi(\Omega)) = 0$ and $\bmu=\Psi_{\sharp}\mu$. Therefore
    \begin{align*}
    	\nu(\Omega) = \int_{\Omega}\brho(\Psi(\omega))\mu(\D\omega) = \int_{\Psi(\Omega)}\brho(\bomega)\bmu(\D\bomega) = \int_{\bOmega_\bbG}\brho(\bomega)\bmu(\D\bomega) = 1,
    \end{align*}
    and so $\nu$ is a probability measure on $\Omega$.
	Moreover, for any Borel measurable $A\subset \bOmega_\bbG$,
    \begin{align*}
        \bnu(A)=\int_{A}\brho(\bomega)\bmu(\D \bomega) =\int_{\Psi^{-1}(A)}\brho(\Psi(\omega))\mu (\D\omega)=\int_{\Psi^{-1}(A)}\nu (\D \omega)=\nu(\Psi^{-1}(A))
    \end{align*}
    Hence $\bnu=\Psi_{\sharp}\nu$.
\end{proof}

We show that the following shift map is well defined in \Cref{prop:shift} below.

\begin{definition}(Shift map)\label{def:shift-map}
For $h\in\AC_0([0,1],\R^{d_1})$ define the shift map $T_{h}\colon\bOmega_\bbG\to\bOmega_\bbG$ by
\begin{align}\label{def:rp-shift}
T_h \bomega = \lim_{n \to \infty} \Psi(\omega^n + h), \quad \bomega\in\bOmega_\bbG,
\end{align}
where $(\omega^n)_{n \in \N}$ denotes the geodesic approximation from \cref{lem:geo-approx}.
\end{definition}

\begin{proposition}\label{prop:shift}
	The shift map defined in \cref{def:shift-map} satisfies the following:
	\begin{enumerate}[label=(\roman*)]
		\item For $h \in \cH$, the shift map $T_h$ is well defined.
		\item For $\bX=(\bX^{(1)},\bX^{(2)}) \in \bOmega_\bbG$ and $h \in \cH$ the shift map $T_h \bX$ is explicitly given by
		\begin{align*}
			(T_h \bX)^{(1)}_t = \bX^{(1)}_t + h_t, \quad
			(T_h \bX)^{(2)}_t = \bX^{(2)}_t + \frac{1}{2}\SCO
			\left( \mathbb{X}^1_t 
			+ \mathbb{X}^2_t + \mathbb{X}^3_t \right),
		\end{align*}
		where 
		\[
		\D \mathbb{X}^{1}_t = \bX^{(1)}_t\otimes \D h_t, 
		\quad
		\D \mathbb{X}^{2}_t = h_t\otimes \D\bX^{(1)}_t,
		\quad
		\D \mathbb{X}^{3}_t = h_t\otimes \D h_t.
		\]
		\item If $\boldsymbol{X}$ is given by $\boldsymbol{X}=\Psi(X)$ for $X\in \dom(\Psi)$, then, for $h \in \cH$, 
		\begin{align}\label{eq:lift-shift}
		    T_{h}\boldsymbol{X}=T_{h}\Psi(X)=\Psi(X+h).
		\end{align}
		\item The map $\AC_0([0,1],\R^{d_1})\times \bOmega_\bbG\to \bOmega_\bbG,\, (h,\boldsymbol{X})\mapsto T_{h}\boldsymbol{X}$ is continuous.
	\end{enumerate}
\end{proposition}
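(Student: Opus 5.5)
The plan is to compute $T_h$ explicitly on smooth horizontal paths and then pass to the limit along the geodesic approximation. Fix $\bX\in\bOmega_\bbG$ and let $\bX^n=\Psi(\omega^n)$ be the geodesic approximations from \cref{lem:geo-approx}, so that $\omega^n=\pi_1\bX^n\to \bX^{(1)}$ and $\bX^{(2),n}\to\bX^{(2)}$ uniformly. For $h\in\cH$ and absolutely continuous $\omega^n$, \cref{def:canonicalLiftH} gives
\[
\Psi(\omega^n+h)^{(2)}_t=\tfrac12\SCO\int_0^t(\omega^n_s+h_s)\otimes \D(\omega^n_s+h_s).
\]
Expanding bilinearly yields $\bX^{n,(2)}_t+\tfrac12\SCO\bigl(\int_0^t\omega^n\otimes\D h+\int_0^t h\otimes \D\omega^n+\int_0^t h\otimes\D h\bigr)$, and the first level is simply $\omega^n_t+h_t$. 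This settles (ii) at the level of the approximations.

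The key step is to show that each term converges uniformly in $t$ when only $\omega^n\to\bX^{(1)}$ uniformly and $h\in\cH$; this gives (i) and (ii) simultaneously. The term $\int\omega^n\otimes\D h=\int\omega^n\otimes\dot h\,\D s$ converges by dominated convergence, since $\dot h\in L^2\subset L^1$. The cross term $\int h\otimes\D\omega^n$ is the delicate one, because $\omega^n$ has no uniformly controlled variation. Here I would integrate by parts:
\[
\int_0^t h\otimes \D\omega^n=h_t\otimes\omega^n_t-\int_0^t \D h\otimes \omega^n,
\]
which converges to $h_t\otimes\bX^{(1)}_t-\int_0^t\D h\otimes\bX^{(1)}$ uniformly. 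This limit then serves as the definition of $\int h\otimes\D\bX^{(1)}$ as a Young/Riemann--Stieltjes integral, which is legitimate because $h$ has bounded variation. The term $\int h\otimes \D h$ is constant in $n$. The limit is therefore independent of the chosen approximating sequence, which proves that $T_h$ is well defined.

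For (iii), take $\bX=\Psi(X)$ with $X\in\dom(\Psi)$. Apply the same expansion to the dyadic piecewise-linear approximations $\wh X^n$, using $\Psi(\wh X^n+h)$ and $\Psi(\wh X^n)\to\bX$. The cross terms converge by the same integration-by-parts argument, since $\wh X^n\to X=\bX^{(1)}$ uniformly. Hence $\lim_n\Psi(\wh X^n+h)$ exists and equals the explicit formula for $T_h\bX$. Strictly, $\wh{(X+h)}^n\neq\wh X^n+h$, so I must also check that $\Psi(\wh X^n+\wh h^n)-\Psi(\wh X^n+h)\to0$. This follows from the same bilinear estimates, since $\wh h^n\to h$ in $W^{1,1}$. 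Consequently $X+h\in\dom(\Psi)$ and $\Psi(X+h)=T_h\bX$.

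For (iv), continuity follows from the explicit formula. The terms $h_t\otimes\bX^{(1)}_t$, $\int\dot h\otimes \bX^{(1)}$ and $\int h\otimes\dot h$ are jointly continuous in $(h,\bX)$, with the topology on $\AC_0$ taken to be $W^{1,1}$ and the uniform topology on $\bOmega_\bbG$. This uses only elementary bounds of the form $\|\dot h\|_{L^1}\sup|\bX^{(1)}|$. Uniform convergence in $d_\infty$ is then recovered via the gauge-distance equivalence \eqref{eq:gauge-distance}.

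I expect the cross term $\int h\otimes\D\omega^n$ to be the main obstacle. The integration-by-parts trick is what makes it tractable, and it applies uniformly across parts (i)--(iv).
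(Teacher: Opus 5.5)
Your proposal is correct and follows the same route as the paper: you expand the canonical lift bilinearly along absolutely continuous approximations, then pass to the limit using continuity of Riemann--Stieltjes integration, which you make explicit via integration by parts for the cross term $\int h\otimes\D\omega^n$. Your extra check that $\Psi(\wh X^n+\wh h^n)$ and $\Psi(\wh X^n+h)$ have the same limit supplies a step the paper leaves implicit in (iii), which is needed because $\Psi(X+h)$ is defined through the dyadic interpolants of $X+h$ rather than by adding $h$ to those of $X$.
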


\begin{proof}
If $\bX = \Psi(X)$ is an absolutely continuous horizontal curve, we have
$T_h \bX = \Psi(X + h)$, for $h \in \cH$. Then (ii) follows by definition of the lift on $\AC_0([0,1],\R^{d_1})$.
The representation of (ii), with cross integrals in $\mathbb{X}^1, \mathbb{X}^2$, remains meaningful
when $X$ is only continuous, by basic properties of Riemann--Stieltjes integration. By continuity properties of Riemann--Stieltjes integration, we obtain that the limit in \eqref{def:rp-shift} exists, so that (i) and (ii) follow.

Now let $X \in \dom(\Psi)$, so that $\bX=\Psi(X)=\lim_{n\to\infty}\Psi(\wh{X}^n)$ for the piecewise linear approximation $(\wh{X}^n)$. Let $h \in \cH$. Then $\boldsymbol{X}^{(1)}=\lim_{n\to\infty} \wh X^{n}$ and $\boldsymbol{X}^{(2)}= \lim_{n\to\infty} \frac12\SCO\mathbb{X}^{0,n}$, where $\D\mathbb{X}^{0,n}_t = \wh X^n_t\otimes \D\wh X^n_t$.
By the definition of $\Psi$ on $\AC_0([0,1],\R^d)$, we have that 
\[\Psi(\wh X^n+h)=\Big(\wh X^{n}+h, \,
\frac{1}{2} \SCO(\mathbb{X}^{0,n}_t+\mathbb{X}^{1,n}_t+\mathbb{X}^{2,n}_t+\mathbb{X}^{3}_t)\Big),
\] 
with $\mathbb{X}^{1,n}$, $\mathbb{X}^{2,n}$ defined as in (ii) with $X$ replaced by $\wh X^n$, and $\mathbb{X}^{3}$ defined as in (ii). By continuity of the Riemann--Stieltjes integral and (ii), we deduce that $\lim_{n\to\infty} \Psi(\wh X^{n}+h)=T_{h}\Psi(X)$. This proves (iii).

Finally, (iv) also follows from continuity properties of the Riemann--Stieltjes integral and the representation from (ii). 
\end{proof}

\begin{remark}\label{rem:translation-regularity}
    As is plain from Proposition \ref{prop:shift}, part (ii), we can translate any $\bX=(\bX^{(1)},\bX^{(2)}) \in \bOmega_\bbG$ in the direction of any absolutely continuous $h$. The situation is more complicated when dealing with Carnot groups of level strictly greater than $2$, cf.\ \cite[Section 9.4.6]{FrVi10}, or when $h$ has less regularity, as is the case for Cameron--Martin paths of fractional Brownian motion with Hurst parameter $H < 1/2$; cf.\ \Cref{sec:top-riedel}. In these cases, one has to incorporate suitable $p$-variation or H\"older rough path regularity on the path space of $\bX$.
\end{remark}

\begin{remark}\label{rem:commutativity-error}
	For any $h \in \cH$ and $\bX \in \bOmega_\bbG$, manipulating the expression for $T_h \bX$ from \cref{prop:shift} gives $\bX_{s,t}^{-1}(T_h \bX)_{s,t} = (\bZ^{(1)}_{s, t}, \bZ^{(2)}_{s, t})$, for $s, t \in [0, 1]$, $s \leq t$, where
	\begin{align*}
		\bZ^{(1)}_{s, t}  = h_{s,t},\quad
		\bZ^{(2)}_{s, t}  = \frac{1}{2}\Big(\mathbb{X}^{1}_{s,t} + \mathbb{X}^{2}_{s,t} + \mathbb{X}^{3}_{s,t} 
        + 2X_t \otimes h_s 
        - (X_s \otimes h_s  
        + X_t \otimes h_t 
        + h_s \otimes h_t)\Big).
	\end{align*}
	Also define $\bh = \Psi(h)$. Then, after integrating by parts, we find that, for any $s, t \in [0, 1]$ with $s \leq t$,
	\begin{align*}
		\btheta_{s,t}\coloneqq\bh^{-1}_{s, t} \bX_{s,t}^{-1}(T_h \bX)_{s,t} = \Big(0, \int_s^t \SCO h_{s, r} \otimes \D X_r\Big).
	\end{align*}
The increments $\btheta_{s,t}$ can be interpreted as an error of non-commutativity
between the increments of the shifted path $T_h\bX$ and the increments
of the (right-)translation $\bX\bh$ by the lifted path $\bh$. 
\end{remark}

We now use the shift map to define a cost function on $\bOmega_\bbG$.

\begin{definition}\label{def:rp-cost}
Define a cost function $C_\cH \colon \bOmega_{\mathbb{G}}\times\bOmega_{\mathbb{G}}\to [0,\infty]$ by
\begin{equation}\label{eq:rp-cost}
	C_\cH(\bX, \bY) \coloneqq
	\begin{cases}
		\|h\|_\cH, & \text{if} \; \bY = T_h \bX, \; \text{for some} \; h \in \cH,\\
        + \infty, & \text{otherwise}.
    \end{cases}
\end{equation}
\end{definition}

\begin{lemma}\label{lem:cost-mb}
    The cost $C_{\cH}\colon\bOmega_{\mathbb{G}}\times\bOmega_{\mathbb{G}}\to [0,\infty]$ is lower semicontinuous.
\end{lemma}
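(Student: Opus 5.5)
To prove lower semicontinuity of $C_\cH$ on $(\bOmega_\bbG\times\bOmega_\bbG, d_\infty\times d_\infty)$, I take sequences $\bX^n\to\bX$, $\bY^n\to\bY$ uniformly and show
\[
C_\cH(\bX,\bY)\le \liminf_{n\to\infty} C_\cH(\bX^n,\bY^n).
\]
I may assume the right-hand side equals some $M<\infty$. Passing to a subsequence, I may also assume $C_\cH(\bX^n,\bY^n)\to M$ and, for every $n$, $\bY^n = T_{h^n}\bX^n$ with $h^n\in\cH$ and $\|h^n\|_\cH\le M+1$.

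The first step is to identify a limiting shift. Balls of $\cH=W^{1,2}_0$ are weakly compact, so after a further subsequence $h^n\rightharpoonup h$ weakly in $\cH$, and $\|h\|_\cH\le\liminf\|h^n\|_\cH = M$. By Arzelà--Ascoli (or the compact embedding $W^{1,2}_0\hookrightarrow C_0$), $h^n\to h$ uniformly. The uniform $\frac12$-Hölder bound $|h^n_t-h^n_s|\le (M+1)|t-s|^{1/2}$ makes this equicontinuous.

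It then suffices to show $T_{h^n}\bX^n\to T_h\bX$ uniformly. In that case $\bY=T_h\bX$, so $C_\cH(\bX,\bY)\le\|h\|_\cH\le M$, as required. I will use the explicit formula of \cref{prop:shift}(ii). The first level converges trivially, since $\bX^{n,(1)}+h^n\to \bX^{(1)}+h$ uniformly. For the second level I need to control the three cross integrals.

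\begin{itemize}
\item The term $\int \bX^{n,(1)}\otimes\D h^n$ converges: $\bX^{n,(1)}\to\bX^{(1)}$ uniformly while $h^n$ has uniformly bounded variation, $\int_0^1|\dot h^n|\le M+1$. The remaining piece $\int \bX^{(1)}\otimes \D(h^n-h)=\int_0^t \bX^{(1)}_r\otimes(\dot h^n_r-\dot h_r)\,\D r$ tends to zero for each $t$ by weak $L^2$ convergence. Pointwise convergence upgrades to uniform convergence because these integrals are equicontinuous in $t$ (they are $\frac12$-Hölder uniformly in $n$).
\item The term $\int h^n\otimes \D\bX^{n,(1)}$, where $\bX^{n,(1)}$ is merely continuous, is handled by integrating by parts: it equals $h^n_t\otimes\bX^{n,(1)}_t-\int \D h^n\otimes \bX^{n,(1)}$, which reduces it to the previous case.
\item The term $\int h^n\otimes \D h^n$ is handled by the same argument, since $h^n\to h$ uniformly and $\dot h^n\rightharpoonup\dot h$.
\end{itemize}

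Note that the continuity statement \cref{prop:shift}(iv) is not enough on its own. That statement is phrased with the $\AC$ topology on $h$, whereas here I only have weak convergence in $\cH$. This is exactly why the integration-by-parts computations above are needed.

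The main obstacle is this convergence of the cross integrals under only weak convergence of $\dot h^n$ combined with uniform convergence of continuous, non-BV paths $\bX^{n,(1)}$. The product of the weakly and strongly converging factors is handled by the splitting in the first bullet. Since $\bX^{(2)}$ enters additively and converges uniformly, the second levels converge as well. This completes the proof, because uniform limits are unique.
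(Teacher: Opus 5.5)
Your proof is correct, and its skeleton is the same as the paper's. You pass to a subsequence realising the $\liminf$ and write $\bY^n=T_{h^n}\bX^n$ with $h^n=\pi_1\bY^n-\pi_1\bX^n$. You obtain a limit $h\in\cH$ with $\|h\|_\cH\le\liminf\|h^n\|_\cH$, and then conclude $\bY=T_h\bX$ from continuity of the shift.

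The only real difference is that last step. The paper simply invokes \Cref{prop:shift}(iv). You point out, rightly, that (iv) is stated for $h$ in the $\AC_0$ topology. Its proof rests on continuity of Riemann--Stieltjes integrals, which naturally gives continuity in the total variation of $h$. A sequence bounded in $\cH$ and converging uniformly need not converge in $W^{1,1}$: take $h^n_t=(n^{-1}\sin(nt),0,\dotsc,0)$, which is bounded in $\cH$ and tends to $0$ uniformly, while $\int_0^1|\dot h^n_t|\,\D t\to 2/\pi$.

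Your argument proves exactly the sequential continuity the lemma needs, namely $T_{h^n}\bX^n\to T_h\bX$ whenever $h^n\rightharpoonup h$ weakly in $\cH$ and $\bX^n\to\bX$ uniformly. It does this in three moves:
\begin{itemize}
\item splitting each cross integral into a (uniformly small)$\times$(bounded variation) part and a fixed continuous integrand tested against the weakly convergent $\dot h^n-\dot h$;
\item upgrading pointwise to uniform convergence via the uniform $\tfrac12$-H\"older bound;
\item reducing $\int h^n\otimes\D\bX^{n,(1)}$ to the first case by integration by parts, the boundary term at $0$ vanishing because $h^n_0=0=\bX^{n,(1)}_0$.
\end{itemize}
The paper's version is shorter but implicitly uses (iv) in a stronger form than it states and proves. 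Yours costs a few more lines but makes that step airtight.
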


\begin{proof}
    Let $\bX, \bY \in \bOmega_\bbG$ and let $(\bX^n), (\bY^n) \subset \bOmega_\bbG$ be sequences such that $(\bX^n, \bY^n) \to (\bX, \bY)$. We may assume that there exists a subsequence $n_k\to\infty$ such that $\bY^{n_k} = T_{h^{n_k}} \bX^{n_k}$, where $h^{n_k} = \pi_1 \bY^{n_k} - \pi_1 \bX^{n_k} \in \cH$, and $\liminf_{n\to\infty}C_\cH(\bX^{n}, \bY^{n}) = \lim_{k\to\infty} C_\cH(\bX^{n_k}, \bY^{n_k})=:I<\infty$.
    Then we have that
    \begin{align*}
        I = \lim_{k \to \infty} C_\cH(\bX^{n_k}, \bY^{n_k}) = \liminf_{k \to \infty} \|h^{n_k}\|_\cH \geq \|h\|_\cH,
    \end{align*}
    where $h = \pi_1 \bY - \pi_1 \bX \in \cH$. By the continuity of the shift shown in \Cref{prop:shift} (iv), $\bY = T_h \bX$, and so $C_\cH(\bX, \bY) = \|h\|_\cH$.
\end{proof}

\subsection{Discussion on the choice of the cost function \texorpdfstring{$C_{\cH}$}{C\_H}}
\label{sec:sharp-cost}
The choice of cost function $C_\cH$ is natural in the sense that
it arises as the $\Gamma$-limit
of the sequence $C_n$, as shown in \Cref{sec:gamma-conv}. Moreover, $C_\cH$ has the crucial property that whenever $H(\bnu \| \bmu) = + \infty$, also $\T_{C_\cH, 2}(\bmu, \bnu) = +\infty$. Indeed, supposing that $\T_{C_\cH, 2}(\bmu, \bnu) < \infty$, there exists a coupling $\blambda \in \Pi(\bmu, \bnu)$ such that
    \begin{align*}
    	\blambda(\{\, (\bomega, \ol \bomega) \in \bOmega_\bbG \times \bOmega_\bbG : \; \ol \bomega = T_{\ol \omega - \omega} \bomega, \, \ol \omega - \omega \in \cH \, \}) = 1,
    \end{align*}
    and $\int C_\cH^2(\bomega, \ol \bomega)\di \blambda(\bomega, \ol \bomega) < \infty$.
    We have that $\bmu = \Psi_\sharp \mu$, by \cref{prop:BM-lift}, and combining this with \cref{eq:lift-shift} from \cref{prop:shift} gives
    \begin{align*}
    	\blambda(\{\, (\bomega, \ol \bomega) \in \bOmega_\bbG \times \bOmega_\bbG : \; \bomega = \Psi(\omega), \, \ol \bomega = \Psi(\ol \omega), \, \ol \omega - \omega \in \cH \, \}) = 1.
    \end{align*}
    Thus there exists $\nu \in \Pc(\Omega)$ such that $\bnu = \Psi_\sharp \nu$. By It\^o representation and Girsanov's theorem, we also have that $\nu \ll \mu$. Hence $\bnu = \Psi_\sharp \nu \ll \Psi_\sharp \mu = \bmu$, and $H(\bnu \| \bmu) < \infty$.
    
    This is in contrast to the cost function $\wt C_\cH \coloneqq c_\cH \circ (\pi_1 \times \pi_1) \colon \bOmega_\bbG \times \bOmega_\bbG \to [0, \infty]$, which appears in the cost-information inequality in \cite[Corollary 1.4]{riedel_transportationcost_2017}. Indeed, consider the Brownian motion $\bB = (B, \mathbb A)$ on the Heisenberg group $\bbG \cong \R^2 \oplus \R$. Let $\bnu = \Law(\bX)$, where $\bX = (B, \mathbb X)$ is defined as follows. Let $M \in (0, \infty)$, and define $\mathbb X_0 = 0$ and $\mathbb X_{s,t} = \mathbb A_{s,t} + (t - s)M$, for all $s, t \in [0, 1]$ with $s < t$. Since $\bX$ only differs from $\bB$ in the second component, we see that
    \begin{align*}
    	0 \leq \T_{\wt C_\cH, 2}^2(\bmu, \bnu) \leq  \E \Big[ \wt C_\cH^2(\bB, \bX) \Big] = \E \Big[ c_\cH^2(B, B) \Big] = 0.
    \end{align*}
    However, $\bnu$ is not absolutely continuous with respect to $\bmu$. Indeed, supposing that $\bnu \ll \bmu$, \Cref{lem:abs-cont-lift} implies that $\bnu = \Psi_{\sharp}\nu$ for some probability measure $\nu \ll \mu$. It follows that $\bnu(\Psi(\Omega)) = \nu(\Omega) = 1$. On the other hand, since $\bnu = \Law(\bX)$ with $\bX = (B, \mathbb X) \neq \Psi(B)$, we see that $\bnu(\Psi(\Omega)) < 1$, which is a contradiction. Hence $H(\bnu \| \bmu) = + \infty$.

\section{Talagrand for Brownian motion on Carnot groups -- Direct approach via F\"ollmer drift}\label{sec:top-foellmer}

In this section, we give a first proof of Talagrand's $\Tc_2$ inequality for the law of Brownian motion on a step-$2$ Carnot group. We follow the strategy of \cite{Le13} and \cite{Foe21}, using F\"ollmer's intrinsic drift from \cite{Fo86,Fo88}. Moreover, we show that equality is attained in the $\Tc_2$ inequality when restricting to adapted couplings, as was shown in the classical case in \cite[Lemma 5]{La18} and \cite[Theorem 3]{Foe21}.

We first give a characterisation of adapted couplings on $\Pc(\bOmega_\bbG \times \bOmega_\bbG)$. In particular, we show that our definition of adapted couplings in \Cref{def:adapted-coupling} is consistent with that of \cite[Definition 1]{Foe21}.

\begin{lemma}\label{lem:adapted-couplings}
    Suppose that $\nu \ll \mu$. Then $\lambda \in \Pi(\mu, \nu)$ is an adapted coupling if and only if there exists a filtered probability space $(\wt\Omega, \F, (\F_t), \Q)$ on which processes $X$, $Y$ are defined such that $X$ is a Brownian motion, $Y$ is an adapted process, and $\lambda = \Law(X, Y)$ under $\Q$.

    Suppose that $\bnu \ll \bmu$. Then $\blambda \in \Pi(\bmu, \bnu)$ is an adapted coupling if and only if there exists $\nu \ll \mu$ and an adapted coupling $\lambda \in \Pi_\ad(\mu, \nu)$ such that $\blambda = (\Psi \times \Psi)_\sharp \lambda$.
\end{lemma}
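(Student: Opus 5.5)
For the first statement I would work directly from \Cref{def:adapted-coupling}. For ``$\Rightarrow$'', take $\wt\Omega=\Omega\times\Omega$ with coordinate processes $X,Y$, $\Q=\lambda$, and the filtration $\F_t=\F^X_t\vee\F^Y_t$ (completed). Then $Y$ is adapted by construction. The task is to show that $X$ remains an $(\F_t)$-Brownian motion. By the causality condition ($\F^Y_t$ is independent of $\F^X_1$ given $\F^X_t$), for $s<t$ the conditional law of $X_t-X_s$ given $\F^X_s\vee\F^Y_s$ equals its conditional law given $\F^X_s$. Since $X$ is Brownian in its own filtration, that law is $\mathcal N(0,(t-s)I)$, so Lévy's characterisation applies. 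For ``$\Leftarrow$'', the symmetric causality condition ($\F^Y_t\perp\F^X_1\mid\F^X_t$) follows by the same computation run backwards. Since $X$ is an $(\F_t)$-Brownian motion, the future increments $(X_r-X_t)_{r\ge t}$ are independent of $\F_t\supseteq\F^X_t\vee\F^Y_t$, which gives the conditional independence. The other condition, $\F^X_t\perp\F^Y_1\mid\F^Y_t$, is where $\nu\ll\mu$ enters. I would invoke the It\^o representation and Girsanov argument as in \cite[Definition 1]{Foe21}: under $\nu\ll\mu$, $Y$ is a Brownian motion plus an adapted drift that is a functional of $Y$ (the F\"ollmer drift). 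I would then argue as in \cite{Foe21}, or alternatively restrict to couplings of the form $Y=X+\int u$ with $u$ adapted to the filtration of $Y$. I expect this step to be the main obstacle, and I would treat it by citing \cite{Foe21}, whose characterisation is exactly this equivalence.

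For the second statement I would transport the first statement through $\Psi$. By \Cref{lem:abs-cont-lift}, $\bnu\ll\bmu$ gives $\nu\ll\mu$ with $\bnu=\Psi_\sharp\nu$; similarly $\bmu=\Psi_\sharp\mu$ by \Cref{prop:BM-lift}. The key observation is that filtrations are preserved. On $\dom(\Psi)$, $\pi_1\circ\Psi=\mathrm{id}$. Moreover, $\Psi(\omega)|_{[0,t]}$ is determined up to null sets by $\omega|_{[0,t]}$, since it is an a.s.\ limit of lifts of dyadic interpolations, and for those the level-$2$ part on $[0,t]$ only involves the path up to time $t$ (up to the last partial interval, which vanishes in the limit). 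Hence the completed filtrations of $\bX$ and of $X=\pi_1\bX$ coincide, both under $\bmu$ and under $\bnu$, the latter because $\bnu\ll\bmu$ preserves the null sets.

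Given this, for ``$\Leftarrow$'' I take $\lambda\in\Pi_\ad(\mu,\nu)$ and set $\blambda=(\Psi\times\Psi)_\sharp\lambda$. The conditional independence statements transfer verbatim because the generated $\sigma$-algebras agree. For ``$\Rightarrow$'' I set $\lambda=(\pi_1\times\pi_1)_\sharp\blambda$. It is a coupling of $\mu$ and $\nu$, since $\bmu$ and $\bnu$ are concentrated on $\Psi(\dom\Psi)$. Because $\Psi\circ\pi_1=\mathrm{id}$ holds $\bmu$- and $\bnu$-a.s., we get $\blambda=(\Psi\times\Psi)_\sharp\lambda$, and adaptedness transfers again by the equality of filtrations.
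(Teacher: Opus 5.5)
Your ``only if'' argument for the first claim is correct and replaces the paper's citation of \cite{La18}: take $\lambda$ on $\Omega\times\Omega$ with the joint filtration, and use $\F^Y_s\perp\F^X_1\mid\F^X_s$ to see that $X_t-X_s$ is independent of $\F^X_s\vee\F^Y_s$. Your treatment of the second claim is the paper's filtration-transfer argument through $\Psi$, written out more carefully, e.g.\ with $\lambda=(\pi_1\times\pi_1)_\sharp\blambda$ and $\Psi\circ\pi_1=\mathrm{id}$ holding $\bmu$- and hence $\bnu$-a.s.

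The genuine gap is the condition $\F^X_t\perp\F^Y_1\mid\F^Y_t$ in the ``if'' direction of the first claim, which you defer to \cite{Foe21}. No argument can supply it, because that implication is false, even when $\nu\sim\mu$. Let $X$ be a Brownian motion, $\xi$ an independent fair sign, $\F_t=\F^X_t\vee\sigma(\xi)$, and $Y_t=X_t+\xi t\,e_1$. Then $X$ is an $(\F_t)$-Brownian motion and $Y$ is adapted. Moreover, $\nu=\Law(Y)$ is a mixture of two Cameron--Martin shifts of $\mu$, so $\nu\sim\mu$. For $t\in(0,1)$, $\xi=(Y^1_t-X^1_t)/t$ is $\F^X_t\vee\F^Y_t$-measurable, while Bayes' formula gives $\E[\xi\mid\F^Y_t]=\tanh(Y^1_t)$. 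Hence
\[
\E\bigl[Y^1_1-Y^1_t\mid\F^X_t\vee\F^Y_t\bigr]=(1-t)\,\xi\neq(1-t)\tanh(Y^1_t)=\E\bigl[Y^1_1-Y^1_t\mid\F^Y_t\bigr]
\]
almost surely. So $\Law(X,Y)$ is causal but not adapted in the sense of Definition~\ref{def:adapted-coupling}.

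This is exactly where your F\"ollmer-drift heuristic breaks. The F\"ollmer drift is the drift of $Y$ in its own filtration (here $\tanh(Y^1_t)\,e_1$). The missing condition instead requires the drift of $Y$ in the joint filtration (here $\xi\,e_1$) to coincide with it, and $\nu\ll\mu$ says nothing about that. Citing \cite{Foe21} does not help either, since F\"ollmer's adapted couplings are defined by the filtered-space condition on the right-hand side itself. Your alternative of restricting to $Y=X+\int u$ changes the statement.

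The paper's own proof has the same hole. It rests on \cite[Propositions 3 and 4]{La18}, which concern the one-sided notion of \cite[Definition 1]{La18}. A correct version adds to the right-hand side that $\F^Y$ is immersed in $(\F_t)$, i.e.\ $\F_t\perp\F^Y_1\mid\F^Y_t$. Since $\F^X_t\subseteq\F_t$, this gives the missing condition at once. Conversely, for any adapted coupling the joint filtration $\F^X_t\vee\F^Y_t$ has both properties. For Theorem~\ref{thm:t2-top-foellmer}, the ``only if'' direction suffices, together with the direct check that $\Law_\nu(B^\nu,B)$ is adapted. That check holds because $\F^{B^\nu}\subseteq\F^B$ and $B^\nu$ is an $\F^B$-Brownian motion.
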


\begin{proof}
	Suppose that $\nu \ll \mu$. By \cite[Lemma 4]{La18}, our \Cref{def:adapted-coupling} of adapted couplings is equivalent to the symmetric counterpart of \cite[Definition 1]{La18} (see \cite[Section 4.1]{La18}). Then the first claim follows from \cite[Propositions 3 and 4]{La18}.
    
    Now suppose that $\bnu \ll \bmu$. By \Cref{lem:abs-cont-lift}, there exists $\nu \ll \mu$ such that $\bnu = \Psi_\sharp \nu$. If $X$ is an $\R^{d_1}$-valued process with natural filtration $(\F_t)_{t \in [0, 1]}$ completed with respect to the law of $X$, and $\bX = \Psi(X)$ is a $\bbG$-valued process with natural filtration $(\bfilt_t)_{t \in [0, 1]}$ completed with respect to the law of $\bX$, then $\bfilt_t = \Psi(\F_t)$, for all $t \in [0, 1]$. Since $\bmu = \Psi_\sharp \mu$ and $\bnu = \Psi_\sharp \nu$, we have that $\blambda \in \Pi_\ad(\bmu, \bnu)$ if and only if $\blambda = (\Psi \times \Psi)_\sharp \lambda$ for some $\lambda \in \Pi_\ad(\mu, \nu)$.
\end{proof}

\begin{remark}
    By \Cref{lem:adapted-couplings}, if $\blambda \in \Pi_\ad(\bmu, \bnu)$, then there exist $\bX = \Psi(X)$, $\bY = \Psi(Y)$ defined on some filtered probability space $(\wt\Omega, \F, (\F_t), \Q)$ such that $\blambda = \Law_\Q(\bX, \bY)$, where $\bX$ is a Brownian motion on $\bbG$ and $\bY$ is an adapted $\bbG$-valued process, and $\Law_\Q$ denotes the law under $\Q$. Then, letting $\E_\Q$ denote expectation with respect to $\Q$, we have
    \begin{align*}
        \int C_\cH^2(\bomega, \ol \bomega) \di \blambda = \E_\Q[C_\cH^2(\bX, \bY)].
    \end{align*}
\end{remark}

We now prove the main result of this section, showing that $\bmu$ satisfies a $\Tc_2$ inequality, and that equality holds when restricting to adapted couplings.

\begin{theorem}\label{thm:t2-top-foellmer}
   Let $\bnu \ll \bmu$. 
    Then there exists $\nu \ll \mu$ such that $\bnu=\Psi_{\sharp}\nu$, and there exists a predictable process $b^\nu$ on $\R^d$ with $b^\nu \in L^2$, $\nu$-almost surely, such that $B^\nu \coloneqq B - \int_0^\cdot b^\nu_t \di t$ is a Brownian motion under $\nu$, and $\blambda^\ast = \Law_\nu(\Psi(B^\nu),\Psi(B))$ is the unique optimal adapted coupling of $\bmu$ and $\bnu$ with
    \begin{align}\label{eq:claim1}
        \T_{C_{\mathcal{H}}, 2}(\bnu,\bmu)^2 \leq \T_{C_{\mathcal{H}}, 2}^{\ad}(\bmu,\bnu)^2 = \E_{\nu}[C_{\mathcal{H}}^2(\Psi(B^\nu),\Psi(B))] = 2H(\bnu \| \bmu).
    \end{align}
    In particular, $\bmu \in \Tc_2(\bOmega_\bbG, C_\cH, 1)$.
\end{theorem}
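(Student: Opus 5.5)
The plan is to transfer the classical F\"ollmer-drift argument of \cite{Le13,Foe21} from Wiener space to $\bOmega_\bbG$ through the lift $\Psi$. The transfer rests on three earlier results: the shift identity \eqref{eq:lift-shift}, which says $T_h\Psi(X)=\Psi(X+h)$; \Cref{lem:abs-cont-lift}; and the description of adapted couplings in \Cref{lem:adapted-couplings}. We work with the canonical process $B$ on $\Omega$.

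\textbf{Step 1 (construction of the drift).} Since $\bnu\ll\bmu$, \Cref{lem:abs-cont-lift} gives $\nu\ll\mu$ with $\bnu=\Psi_\sharp\nu$ and $\D\nu/\D\mu=\brho\circ\Psi$.
\begin{itemize}
\item Because $\sigma(\Psi(B)_s : s\le 1)=\sigma(B_s : s\le 1)$ up to null sets, we have $H(\bnu\|\bmu)=\int\brho\log\brho\,\D\bmu=H(\nu\|\mu)$.
\item If $H(\bnu\|\bmu)=\infty$, the inequality is trivial, so we assume it is finite.
\item Martingale representation of the density process $\E_\mu[\D\nu/\D\mu\mid\F_t]$, together with Girsanov's theorem, yields the F\"ollmer drift $b^\nu$. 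It is predictable, $B^\nu=B-\int_0^\cdot b^\nu_t\,\D t$ is a $\nu$-Brownian motion, and
\[
H(\nu\|\mu)=\tfrac12\E_\nu\Big[\int_0^1|b^\nu_t|^2\,\D t\Big].
\]
\item In particular $h\coloneqq\int_0^\cdot b^\nu_t\,\D t\in\cH$ holds $\nu$-a.s., with $\E_\nu\|h\|_\cH^2=2H(\bnu\|\bmu)$.
\end{itemize}

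\textbf{Step 2 (the coupling and its cost).} Under $\nu$, $B^\nu$ is a Brownian motion, so $B^\nu\in\dom(\Psi)$ a.s. by \Cref{prop:BM-lift}, and $\Psi(B^\nu)\sim\bmu$. Also $B\sim\nu$, so $\Psi(B)\sim\bnu$ (possibly off a $\nu$-null set handled by $\nu\ll\mu$). Hence $\blambda^\ast\in\Pi(\bmu,\bnu)$.

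By \eqref{eq:lift-shift}, $\Psi(B)=\Psi(B^\nu+h)=T_h\Psi(B^\nu)$, so $C_\cH(\Psi(B^\nu),\Psi(B))=\|h\|_\cH$. This gives
\[
\E_\nu\big[C^2_\cH(\Psi(B^\nu),\Psi(B))\big]=2H(\bnu\|\bmu).
\]

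For adaptedness, note that $B$ is adapted to the filtration generated by $B$, and $B^\nu$ is a Brownian motion in that filtration. By \Cref{lem:adapted-couplings}, $\Law_\nu(B^\nu,B)$ is adapted, and so its image $\blambda^\ast$ is adapted. This gives the upper bound $\T^\ad\le\sqrt{2H}$. The remaining inequality $\T\le\T^\ad$ is trivial.

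\textbf{Step 3 (lower bound and uniqueness; the main obstacle).} Take any $\blambda\in\Pi_\ad(\bmu,\bnu)$ with finite cost. By \Cref{lem:adapted-couplings}, it is realised as $\Law_\Q(\Psi(X),\Psi(Y))$ with $X$ a $\Q$-Brownian motion and $Y$ adapted.

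Finite cost forces $\Psi(Y)=T_g\Psi(X)$ with $g\in\cH$. Using the first-level projection $\pi_1$ and the explicit form of $T_g$ in \Cref{prop:shift}(ii), we get $Y=X+g$ a.s., with $g=Y-X$ adapted, $g=\int_0^\cdot\dot g\,\D t$, and $\dot g$ progressively measurable (take it as a limit of difference quotients). This reduces the problem to the Euclidean adapted equality \eqref{eq:adapted-t2-equality} of \cite{La18,Foe21}: $\E_\Q\|g\|_\cH^2\ge 2H(\nu\|\mu)=2H(\bnu\|\bmu)$.

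I would reprove that reduction directly to obtain uniqueness. By the chain rule for relative entropy,
\[
H(\Law(Y)\|\mu)\le H(\Law(X,Y)\|\Law(X,X))=\tfrac12\E\int|\dot g|^2,
\]
and equality forces $\dot g=b^\nu(Y)$, i.e.\ $X=B^\nu(Y)$. Hence $\blambda=\blambda^\ast$.

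The delicate point is justifying the Girsanov/entropy bound for a general adapted $g$ that need not be known a priori to satisfy Novikov's condition. I would handle this by localising with stopping times, using lower semicontinuity of entropy, and invoking the equality-case identification in \cite[Theorem 3]{Foe21}. The final claim, $\bmu\in\Tc_2(\bOmega_\bbG,C_\cH,1)$, then follows from \eqref{eq:claim1}.
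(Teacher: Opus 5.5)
Your proposal follows the paper's proof. You obtain $\nu$ and the F\"ollmer drift from \Cref{lem:abs-cont-lift} and \cite{Foe21}, and compute the cost of $\blambda^\ast$ via the lift--shift identity \eqref{eq:lift-shift}. You then get the lower bound and uniqueness by transferring adapted couplings through \Cref{lem:adapted-couplings}, using \eqref{eq:adapted-t2-equality} and \cite[Theorem 3]{Foe21}. Your observation that finite cost forces $Y=X+g$ is the same content as the paper's inequality $c_\cH\circ(\pi_1\times\pi_1)\le C_\cH$.

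The one error is in your optional direct reproof of the Euclidean step. The quantity $H(\Law(X,Y)\,\|\,\Law(X,X))$ equals $+\infty$ whenever $g\not\equiv 0$, because $\Law(X,X)$ is concentrated on the diagonal. So it cannot equal $\tfrac12\E\int_0^1|\dot g_t|^2\,\D t$.

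The correct argument changes measure by Girsanov. Take $\D\overline{\Q}/\D\Q=\exp\bigl(-\int_0^1\dot g_t\,\D X_t-\tfrac12\int_0^1|\dot g_t|^2\,\D t\bigr)$, after localisation, so that $Y$ is a Brownian motion under $\overline{\Q}$. Data processing then gives $H(\Law_\Q(Y)\,\|\,\mu)\le H(\Q\,\|\,\overline{\Q})=\tfrac12\E_\Q\int_0^1|\dot g_t|^2\,\D t$. Since you ultimately invoke \cite[Theorem 3]{Foe21} for the equality case, exactly as the paper does, this slip does not affect the proof.
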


\begin{proof}
	By \Cref{lem:abs-cont-lift}, there exists a probability measure $\nu$ on $\Omega$ with $\nu \ll \mu$ and $\Psi_\sharp \nu = \bnu$. Writing $\brho\coloneqq\frac{\D\bnu}{\D\bmu}$, we have $\frac{\D\nu}{\D\mu}=\brho \circ \Psi$. Since $\bmu(\bOmega_\bbG \setminus \Psi(\Omega)) = 0$, by \Cref{prop:BM-lift}, we obtain that
    \begin{align*}
        H(\bnu \| \bmu)=\int_{\Psi(\Omega)} \brho (\bomega) \log (\brho(\bomega))\di\bmu(\bomega)=\int_\Omega \brho (\Psi(\omega)) \log (\brho(\Psi(\omega))\di\mu(\omega)=H(\nu \| \mu).
    \end{align*}
    We can apply \cite[Proposition 1]{Foe21} to obtain that there exists a predictable process $b^\nu$ with $b^\nu \in L^2$, $\nu$-almost surely, such that $B^\nu = B - \int_0^\cdot b^\nu_t \di t$ is a Brownian motion under $\nu$ with
    \begin{align}\label{eq:rel-entropy-id}
        \E_{\nu}[\|B^\nu - B\|_{\mathcal{H}}^2] = 2 H(\nu \| \mu) = 2 H(\bnu \| \bmu).
    \end{align}
    
    Moreover, from \cite[Theorem 3]{Foe21} it follows that $\lambda^\ast = \Law_\nu(B^\nu, B)$ is the unique optimal adapted coupling between $\mu$ and $\nu$. By \cref{lem:adapted-couplings}, $\blambda^\ast = \Law_\nu(\Psi(B^\nu),\Psi(B))$ is an adapted coupling of $\bmu=\Psi_{\sharp}\mu$ and $\bnu=\Psi_{\sharp}\nu$. Thus, using the definition of $C_{\mathcal{H}}$ from \cref{eq:rp-cost},
    \begin{align}\label{eq:opt-rp}
        \T_{c_{\mathcal{H}}, 2}^{\ad}(\mu,\nu)^2 = \E_{\nu}[\|B^\nu - B\|_{\mathcal{H}}^2]=\E_{\nu}[C_{\mathcal{H}}^2(\Psi(B^\nu),\Psi(B))] \geq \T_{C_\cH, 2}^\ad(\bmu, \bnu)^2.
    \end{align}

    On the other hand, since $c_\cH \circ (\pi_1 \times \pi_1) \leq C_\cH$, applying \Cref{lem:adapted-couplings} gives
    \begin{align*}
    	\T_{c_\cH, 2}^\ad(\mu, \nu)^2 = \inf_{\lambda \in \Pi_\ad(\mu, \nu)}\int_{\Omega \times \Omega} c_\cH^2(\omega, \ol \omega) \di \lambda(\omega, \ol \omega) & = \inf_{\blambda \in \Pi_\ad(\bmu, \bnu)}\int_{\bOmega_\bbG \times \bOmega_\bbG} c_\cH^2(\pi_1\bomega, \pi_1\ol \bomega) \di \blambda(\bomega, \ol \bomega)\\
    	& \leq \inf_{\blambda \in \Pi_\ad(\bmu, \bnu)}\int_{\bOmega_\bbG \times \bOmega_\bbG} C_\cH^2(\bomega, \ol \bomega) \di \blambda(\bomega, \ol \bomega)\\
        & = \T_{C_\cH, 2}^\ad(\bmu, \bnu)^2.
    \end{align*}
    Hence we have optimality of $\blambda^\ast$ and
    \begin{align*}
    	\T^{\ad}_{C_{\mathcal{H}}, 2}(\bnu,\bmu)^2 = \E_{\nu}[C_{\mathcal{H}^2}(\Psi(B^\nu),\Psi(B))] = \E_{\nu}[\|B^\nu - B\|_{\mathcal{H}}^2] = \T_{c_{\mathcal{H}}, 2}^{\ad}(\mu,\nu)^2.
    \end{align*}
    Applying \eqref{eq:rel-entropy-id} gives \eqref{eq:claim1}, and uniqueness of the optimiser follows by \Cref{lem:adapted-couplings}.
\end{proof}

\begin{remark}\label{rem:drift-adapted-shift}
	Suppose that $h$ is an adapted process with $h \in \cH$ almost surely, and let $\bnu = \Law(T_h\bB)$. Then we can take $b^\nu = \dot h$ in \Cref{thm:t2-top-foellmer}. Indeed, by Girsanov's theorem, $B - h$ is a Brownian motion under $\nu$, and  $H(\bnu \| \bmu) = H(\nu \| \mu) = \E_\nu[\log \D \nu/\D \mu] = \E_\nu[\|h\|_\cH^2]$.
\end{remark}

\section{Talagrand for Gaussian rough paths -- Direct approach via contraction}\label{sec:top-riedel}
We now give an alternative proof of Talagrand's $\Tc_2$ inequality in a more general setting, following the contraction approach of \cite{riedel_transportationcost_2017}.

Let $Z$ be a $d$-dimensional continuous Gaussian process that admits a level $2$ ``rough path'' lift $\boldsymbol{Z} = \boldsymbol{Z} (\omega)$ with $\boldsymbol{Z} \in \mathcal{D}$ almost surely,  where $\mathcal{D}$ is a suitable $p$-variation (or $\beta$-Hölder) rough path space
(cf.\ \cite[Chapter 15]{FrVi10}). 
We refer to \cite[Chapter 15]{FrVi10} for conditions under which a Gaussian process can be lifted to a Gaussian rough path. Here we simply assume that such a lift exists. Let $\nu \in \Pc(\Omega)$ denote the law of the Gaussian process $Z$, $\bnu \in \Pc(\mathcal D)$ the law of $\boldsymbol{Z}$, and $\cH_\nu$ the Cameron--Martin space of $Z$ (cf.\ \cite[Chapter 8, Section 4]{Janson97}).

A $\Tc_2$ inequality is known to hold for general Gaussian processes with $\alpha=1$ and cost 
\begin{align*}
    \wt{c}_{\cH_{\nu}} (x,y)=\begin{cases}
		\|h\|_{\cH_{\nu}}, & \text{if} \; x-y\in\mathcal{H}_{\nu},\\
        + \infty, & \text{otherwise};
    \end{cases}
\end{align*}
that is $\nu\in \Tc_2(\Omega,\wt{c}_{\cH_{\nu}},1)$; see \cite[Theorem 3.1]{FeUe04} and \cite[Theorem 1.2]{riedel_transportationcost_2017}.

We work under the following assumption.

\begin{assumption}\label{ass}Suppose that there exists $\wt \Omega \subseteq \Omega$ with $\nu(\wt\Omega) = 1$ such that
	\begin{enumerate}[label = (\roman*)]
		\item There exists a Borel-measurable lift map $\wt\Psi \colon \Omega \to \mathcal{D}$ with $\pi_1\wt\Psi(x)=x$, for $x \in \wt \Omega$, where $\pi_1$ is the projection onto the first component, such that $\wt\Psi(Z)=\boldsymbol{Z}$ almost surely;
		\item There exists a continuous shift map
			\begin{align*}
				\mathcal{H}_{\nu}\times \mathcal{D}\to \mathcal{D}, \quad (h,\boldsymbol{x})\mapsto \wt{T}_{h}\boldsymbol{x},
			\end{align*}
			such that
			\begin{align}\label{eq:lift-shift-property}
				\wt{T}_h \wt\Psi(x)=\wt\Psi(x+h), \quad x\in \wt \Omega,\quad h\in\mathcal{H}_{\nu}.
			\end{align}
	\end{enumerate}
\end{assumption}

\begin{remark}\label{rem:T2-Gaussian-Euclidean}
By standard results \cite[Chapter 15]{FrVi10}, we see that 
\Cref{ass} is satisfied for $Z=B$  a fractional Brownian motion and its lift $\bB$ in the step-$2$ Carnot group $\bbG$, with path space $\mathcal{D}=C^{p-\var}_0([0,1],\mathbb{G})$, for $H \in (1/3,1/2]$, $p \in (1/H,3)$. Extensions to $H>1/4$ are possible, at the price of lifting $Z$ to a step-$3$ Carnot group; we do not give details for the sake of brevity.
\end{remark}

\begin{definition}
    Define the cost $\wt{C}_{\cH_{\nu}}\colon \mathcal{D}\times \mathcal{D}\to [0,\infty]$, similarly to \eqref{eq:rp-cost}, by
    \begin{equation}\label{eq:rp-cost2}
	\wt{C}_{\cH_{\nu}}(\bX, \bY) \coloneqq
	\begin{cases}
		\|h\|_{\cH_{\nu}}, & \text{if} \; \bY = \wt{T}_h \bX, \; \text{for some} \; h \in \cH_{\nu},\\
        + \infty, & \text{otherwise},
    \end{cases}
\end{equation}
where $\wt{T}_{h}$ is the shift from \cref{ass}.
\end{definition}
Due to the assumed continuity of the shift, we recover measurability of $\wt{C}_{\cH_{\nu}}$ (cf.\ \cref{lem:cost-mb}).
By applying the contraction principle from \cref{lem:contraction}, similarly to \cite{riedel_transportationcost_2017}, but for a different cost, we lift the Talagrand inequality to the rough path space $\mathcal{D}$.
\begin{theorem}\label{thm:top-t2-riedel-approach}
    Let \cref{ass} hold. Then $\bnu\in\Tc_2(\mathcal{D},\wt{C}_{\cH_{\nu}}, 1)$.
\end{theorem}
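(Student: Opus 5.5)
We apply the contraction principle of \Cref{lem:contraction} with $E=\Omega$, $c=\wt c_{\cH_\nu}$, $\eta=\nu$, $S=\mathcal D$, $\wt c=\wt C_{\cH_\nu}$, $\psi=\wt\Psi$ and $L\equiv 1$, taking $p=2$ so that $q=\infty$. Since $\nu\in\Tc_2(\Omega,\wt c_{\cH_\nu},1)$ by \cite[Theorem 3.1]{FeUe04}, and $\wt\Psi_\sharp\nu=\Law(\wt\Psi(Z))=\Law(\bZ)=\bnu$ by \Cref{ass}(i), the lemma would give $\bnu\in\Tc_2(\mathcal D,\wt C_{\cH_\nu},1)$, because $\|L\|_{L^\infty}=1$.

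So everything reduces to the pointwise contraction estimate
\begin{align*}
\wt C_{\cH_\nu}(\wt\Psi(x),\wt\Psi(\ol x))\le \wt c_{\cH_\nu}(x,\ol x),\qquad x,\ol x\in E_0:=\wt\Omega .
\end{align*}
We have $\nu(\wt\Omega)=1$ by \Cref{ass}. If $\ol x-x\notin\cH_\nu$, the right-hand side is $+\infty$ and there is nothing to prove. Otherwise set $h:=\ol x-x\in\cH_\nu$. By the lift--shift identity \eqref{eq:lift-shift-property}, applied at $x\in\wt\Omega$, we get $\wt T_h\wt\Psi(x)=\wt\Psi(x+h)=\wt\Psi(\ol x)$. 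Hence, by \eqref{eq:rp-cost2}, $\wt C_{\cH_\nu}(\wt\Psi(x),\wt\Psi(\ol x))\le\|h\|_{\cH_\nu}=\wt c_{\cH_\nu}(x,\ol x)$. If several $h$ realise the shift, $\wt C_{\cH_\nu}$ should be read as the infimum of $\|h\|_{\cH_\nu}$ over them, and the inequality still holds.

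The remaining points are measurability and which space the contraction lemma is set on. The map $\psi=\wt\Psi$ is Borel by \Cref{ass}(i). Measurability of $\wt C_{\cH_\nu}$ follows, as noted after \eqref{eq:rp-cost2}, from continuity of the shift, by the argument of \Cref{lem:cost-mb}. We need $(\Omega,\|\cdot\|_\infty)$ to be Polish, which it is. We also need $\mathcal D$ to be a metric space, which holds for the $p$-variation rough path space with its usual metric.

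The main obstacle is a subtlety. The identity \eqref{eq:lift-shift-property} is assumed only for $x\in\wt\Omega$, while $x+h$ need not lie in $\wt\Omega$; but the identity is stated with $\wt\Psi(x+h)$ evaluated wherever it is defined, so no extra condition on $x+h$ is needed. In the Gaussian examples one can also take $\wt\Omega$ invariant under $\cH_\nu$-translations. Since the contraction lemma asks for the estimate only for $x,\ol x\in E_0$, both points lie in $\wt\Omega$, and the argument above applies directly.
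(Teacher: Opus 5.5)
Your proof is correct and follows the paper's argument: both apply the contraction principle of \Cref{lem:contraction} with $\psi=\wt\Psi$, $L\equiv 1$ and $E_0=\wt\Omega$, using the lift--shift identity \eqref{eq:lift-shift-property} to compare $\wt C_{\cH_\nu}(\wt\Psi(x),\wt\Psi(\ol x))$ with $\wt c_{\cH_\nu}(x,\ol x)$. The paper states this comparison as an equality of the two costs, but the inequality you prove is all the lemma needs.
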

\begin{proof}
    The property \cref{eq:lift-shift-property} implies that for $h\in\cH_{\nu}$, a path $x\in \wt \Omega$ satisfies $x=y+h$ if and only if $\wt\Psi(x)=\wt{T}_{h}\wt\Psi(y)$. Hence for $x-y\in \mathcal{H}_{\nu}$, we have that $\wt\Psi(y+(x-y))=\wt{T}_{x-y}\wt\Psi(y)$ and thus
\begin{align*}
\|x-y\|_{\mathcal{H}_{\nu}}^2=\wt{C}_{\mathcal{H}_{\nu}}^2(\wt\Psi(x),\wt\Psi(y)).
\end{align*} 
Similarly, if $x-y\notin\mathcal{H}_{\nu}$, then $\wt\Psi(x)$ is not a shift of $\wt\Psi(y)$ and so $\wt{C}_{\mathcal{H}_{\nu}}(\wt\Psi(x),\wt\Psi(y))= +\infty$. Together with $\nu\in\Tc_2(\Omega, \wt{c}_{\mathcal{H}_{\nu}},1)$, $\nu(\wt \Omega)= 1$, and measurability of $\wt{C}_{\cH_{\nu}}$ and $\wt\Psi$, an application of the contraction principle (\cref{lem:contraction}) with $L=1$ then yields $\bnu\in\Tc_2(\mathcal{D},\wt{C}_{\mathcal{H}_{\nu}},1)$. 
\end{proof}

\section{Talagrand for the heat kernel measure on Carnot groups}\label{sec:heat-kernel}

In this section, we show that a $\Tc_2$ inequality on the step-$2$ Carnot group $\bbG$ follows from a log-Sobolev inequality, which in turn can be deduced from a heat semigroup estimate. In particular, we prove a $\Tc_2$ inequality for the heat kernel measure $\bmu_1$ in the case that $\bbG$ is an H-type group. We will apply this result in \cref{sec:bottom-up} to show that a $\Tc_2$ inequality also holds on the path space by a bottom-up approach.

\subsection{From log-Sobolev to Talagrand}\label{sec:log-sob}
We follow the approach put forward by Otto--Villani \cite{OtVi00}, namely deducing a $\Tc_2$ inequality as a consequence of a log-Sobolev inequality. We will make use of the generalisation by Gigli--Ledoux \cite{GiLe13} of Otto--Villani's result. Whereas the result of Gigli--Ledoux \cite{GiLe13} depends on the log-Sobolev inequality for Lipschitz test functions, we show via a mollification argument that this can be relaxed to only requiring the log-Sobolev inequality for smooth test functions; see \cref{thm:gigli-ledoux}.  Moreover, for the heat kernel measure, we show that the log-Sobolev inequality for smooth test functions follows from certain heat semigroup estimates; see \cref{thm:heat-estimate-t2}.
In the special case of H-type groups, as defined in \Cref{rem:HtypeGroups}, the required heat semigroup estimates are known. Thus, in \cref{thm:t2-h-type}, we show that a $\Tc_2$ inequality holds for the heat kernel measure on H-type groups and, in particular, on the Heisenberg group.

Let $\bbG$ be a step-$2$ Carnot group. Recall that, for the Carnot--Carath\'eodory metric $d_\cc$ defined in \eqref{eq:dcc}, the space $(\bbG, d_\cc)$ is a Polish space. Hence, for any Borel probability measure $\etab$ on $\bbG$, the space $(\bbG, d_\cc, \mathcal \etab)$ is a metric measure space in the sense of \cite{GiLe13}.

For a locally Lipschitz function $f \colon \bbG \to \R$, define the local Lipschitz constant $\mathrm{Lip}_\bbG(f)$ by
\begin{equation*}
    \mathrm{Lip}_\bbG(f)(\bx) \coloneqq \limsup_{\by \to \bx}\frac{|f(\by) - f(\bx)|}{d_\cc(\bx, \by)}, \quad \bx \in \bbG.
\end{equation*}
By an extension of Rademacher's theorem due to Pansu \cite{P1989MCQE} (see also \cite{de_philippis_converse_2025,pinamonti_porosity_2017}, and \cite[Theorem 11.3.2]{LeDonne24}),
we obtain that every Lipschitz continuous function $f \colon U\subseteq\bbG\to\R$ is Pansu differentiable Lebesgue-almost everywhere. 
In particular, its gradient $\nabla_\bbG f$ exists $\mathcal L^m$-almost everywhere and $\mathrm{Lip}_\bbG(f)(\bx) = |\nabla_\bbG f(\bx)|_\bbG$, for $\mathcal L^m$-almost every $\bx \in \bbG$.

We say that a Borel probability measure $\etab$ on $\bbG$ with $\etab\ll \mathcal L^m$ satisfies the log-Sobolev inequality if there exists $\alpha \in (0, \infty)$ such that
    \begin{equation}\label{eq:log-sob}\tag{LSI}
       2 \alpha \int_{\bbG} f \log f \di \etab \leq \int_{\{f > 0\}} \frac{|\nabla_\bbG f|_\bbG^2}{f}\di \etab, \quad \text{for all} \; f \in C^\infty_c(\bbG, [0, \infty)), \int_\bbG f \di \etab = 1.
    \end{equation}
    
We first show that \eqref{eq:log-sob} implies a $\Tc_2$ inequality. As an intermediate step, we show that the log-Sobolev inequality also holds for Lipschitz functions, so that we can then apply \cite[Theorem 5.2]{GiLe13}.

\begin{theorem}\label{thm:gigli-ledoux}
    Suppose that $\etab$ is a Borel probability measure on $\bbG$ with $\etab\ll \mathcal L^m$ satisfying \cref{eq:log-sob} for some $\alpha \in (0, \infty)$. Then, for any Lipschitz function $f \colon \bbG \to [0, \infty)$ with $\int_{\bbG}f \di \etab = 1$,
    \begin{equation}\label{eq:log-sob-lipschitz}
        2 \alpha \int_{\bbG} f \log f \di \etab \leq \int_{\{f > 0\}} \frac{|\nabla_\bbG f|_\bbG^2}{f}\di \etab.
    \end{equation}
    Moreover, $\etab \in \Tc_2(\bbG, d_\cc, \alpha)$.
\end{theorem}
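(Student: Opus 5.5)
The plan is to prove the Lipschitz version \eqref{eq:log-sob-lipschitz} by mollification and then quote \cite[Theorem 5.2]{GiLe13}. That theorem states that on a metric measure space, a log-Sobolev inequality for Lipschitz functions, with the local Lipschitz constant $\mathrm{Lip}_\bbG(f)$ in place of the gradient, implies the $\Tc_2$ inequality with the same constant. Since $\mathrm{Lip}_\bbG(f) = |\nabla_\bbG f|_\bbG$ holds $\mathcal L^m$-almost everywhere by Pansu's theorem, and $\etab \ll \mathcal L^m$, the two formulations agree $\etab$-almost everywhere. Hence the second assertion follows from the first.

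For the Lipschitz extension, fix a Lipschitz $f \ge 0$ with $\int f \di\etab = 1$. The reduction to compact support and positivity goes as follows.
\begin{enumerate}
\item Replace $f$ by $f_\epsilon = (f + \epsilon)\chi_R$, where $\chi_R$ is a smooth cutoff built from a smooth homogeneous norm, equal to $1$ on a ball of radius $R$, with $|\nabla_\bbG \chi_R| \le C/R$.
\item Normalise $f_\epsilon$ to have $\etab$-integral one; this is harmless because the log-Sobolev inequality is homogeneous up to an entropy term.
\item Pass $R \to \infty$ and then $\epsilon \to 0$. On the left, use dominated or monotone convergence; $f\log f$ is bounded below, so Fatou's lemma gives the needed inequality direction. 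On the right, use $|\nabla(f+\epsilon)|^2/(f+\epsilon) \le |\nabla f|^2/f$ on $\{f>0\}$. On $\{f=0\}$ we have $\nabla_\bbG f = 0$ almost everywhere, so that set contributes nothing.
\end{enumerate}

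For the smoothing step, regularise by group convolution, $f_\delta = f * \phi_\delta$, that is $f_\delta(\bx) = \int f(\bx\by^{-1})\phi_\delta(\by)\di\by$, with a smooth approximate identity $\phi_\delta$ supported in a $d_\cc$-ball of radius $\delta$. Convolving on the \emph{right} by $\phi_\delta$ commutes with the left-invariant fields $V_i$, so $\nabla_\bbG f_\delta = (\nabla_\bbG f) * \phi_\delta$. Hence $f_\delta \to f$ uniformly on compacts and $\nabla_\bbG f_\delta \to \nabla_\bbG f$ in $L^1_{\rm loc}(\mathcal L^m)$ and almost everywhere (along a subsequence), with $|\nabla_\bbG f_\delta| \le \mathrm{Lip}(f)$.

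The main obstacle is passing to the limit in $\int |\nabla f_\delta|^2/f_\delta \di\etab$ when the measure $\etab$ is only absolutely continuous, with a density that is not assumed bounded. Since $f$ is replaced by $f+\epsilon$ with $\epsilon > 0$, the denominator is bounded below by $\epsilon$ and the numerator by $\mathrm{Lip}(f)^2$, so the integrand is uniformly bounded. Everything is supported in a fixed compact set, and convergence holds $\mathcal L^m$-almost everywhere, hence $\etab$-almost everywhere. Dominated convergence with respect to the finite measure $\etab$ then handles both sides; $f_\delta\log f_\delta$ is also bounded and converges uniformly. Finally, renormalising $f_\delta$ to unit mass only introduces constants tending to $1$.
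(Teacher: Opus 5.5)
Your overall route is the paper's: truncate and mollify to reach $C^\infty_c$, pass to the limit, then invoke Gigli--Ledoux after identifying $\mathrm{Lip}_\bbG f$ with $|\nabla_\bbG f|_\bbG$ $\etab$-a.e. However, the smoothing step rests on an identity that is false in general. Your $f_\delta(\bx)=\int f(\bx\by^{-1})\phi_\delta(\by)\di\by$ averages \emph{right} translates of $f$, whereas left-invariant vector fields commute only with \emph{left} translations. For this convolution, $V_i(f*\phi_\delta)=f*(V_i\phi_\delta)$. On the other hand, $(V_if)*\phi_\delta=f*(\wt V_i\phi_\delta)$, where $\wt V_i$ is the right-invariant field with $\wt V_i(0)=V_i(0)$. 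In $\bbH$ these differ by $\pm y\,\partial_z$, so $\nabla_\bbG f_\delta=(\nabla_\bbG f)*\phi_\delta$ fails in general. For the same reason, you cannot get $|\nabla_\bbG f_\delta|_\bbG\le\mathrm{Lip}(f)$ by averaging difference quotients: $d_\cc$ is not right-invariant, and right translations are not even $d_\cc$-Lipschitz. The fix is to convolve on the other side, $\bx\mapsto\int\phi_\delta(\by)f(\by^{-1}\bx)\di\by$, which is an average of left translates and exactly the paper's $\rho_k\star f$. Then $\nabla_\bbG(\phi_\delta\star f)=\phi_\delta\star\nabla_\bbG f$, and left-invariance of $d_\cc$ gives $|\nabla_\bbG(\phi_\delta\star f)|_\bbG\le\phi_\delta\star|\nabla_\bbG f|_\bbG$.

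The dominated-convergence step is also not justified as written. You use both that the denominator is bounded below by $\epsilon$ and that everything is supported in a fixed compact set, and these are incompatible. After multiplying by $\chi_R$ (which you need, since \eqref{eq:log-sob} is only assumed on $C^\infty_c$), the mollified function tends to $0$ at the edge of its support, where $|\nabla_\bbG f_\delta|_\bbG^2/f_\delta$ has no evident bound. One repair is the Cauchy--Schwarz estimate the paper uses, $|\nabla_\bbG(\phi_\delta\star g)|_\bbG^2\le(\phi_\delta\star g)\,\phi_\delta\star\bigl(g^{-1}|\nabla_\bbG g|_\bbG^2\mathbf 1_{\{g>0\}}\bigr)$. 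This reduces the uniform bound to boundedness of $|\nabla_\bbG g|_\bbG^2/g$ for $g=(f+\epsilon)\chi_R$. That in turn requires $|\nabla_\bbG\chi_R|_\bbG^2/\chi_R\le CR^{-2}$ (e.g.\ take $\chi_R=\psi_R^2$), not merely $|\nabla_\bbG\chi_R|_\bbG\le C/R$. The same term $(f+\epsilon)|\nabla_\bbG\chi_R|_\bbG^2/\chi_R$ must also be controlled when you send $R\to\infty$, via $(a+b)^2\le(1+\eta)a^2+(1+\eta^{-1})b^2$ and $\int f\di\etab<\infty$. With these repairs your argument goes through. Your insistence on bounded integrands and dominated convergence with respect to $\etab$ is then more careful than the paper's last step $\limsup_k\int\rho_k\star(\cdot)\di\etab\le\int(\cdot)\di\etab$, which needs further justification for a general $\etab\ll\mathcal L^m$ and an unbounded integrand.
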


\begin{proof}
	We argue as in the proof of \cite[Theorem 4.8]{AS2019HEFC} via mollification. Consider $f\colon \bbG\to[0,\infty)$ Lipschitz continuous with $\int_\bbG f \di \etab = 1$. In particular, its gradient $\nabla_\bbG f$ exists $\mathcal L^m$-almost everywhere.
	
	Let $\rho \colon \bbG \to \R$ be a symmetric smooth mollifier in  $\bbG$, i.e.\ a function $\rho \in C^\infty_c(\R^m, [0, \infty))$ such that
    $\mathrm{supp} \rho \subset B_1$, $0 \leq \rho \leq 1$, $\rho(\bx^{-1}) = \rho(\bx)$, for all $\bx \in \bbG$,
    and $\int_{\bbG}\rho \di \bx = 1$.
    For $k\in\N$ and $\bx\in\bbG$, set $\rho_k(\bx) = k^Q \rho(\delta_k\bx)$ and define the mollification $\wh f_k = \rho_k \star f$ by
    \[
    	\wh f_k(\bx)\coloneqq (\rho_k \star f)(\bx) = \int_\bbG \rho_k(\bx\by^{-1}) f(\by)\di \by= \int_{\bbG} \rho_{k}(\by)f(\by^{-1}\bx)\di\by,\quad\bx\in\bbG.
    \]
    For each $k \in \N$, also define a smooth truncation function $\chi_k \in C^\infty_c(\R^m, [0,1])$ such that $\chi_k = 1$ in $B_k$, $\chi_k = 0$ in $B_{2k}^c$, and $|\nabla_\bbG \chi_k|_\bbG \leq C/k$ for some constant $C \in (0, \infty)$, and define $f_k \coloneqq \wh f_k \, \chi_k$.
    
    Thus, $f_k \in C^\infty_c(\bbG, [0, \infty))$, for each $k \in \N$. Moreover, $f_k \to f$ in $L^1(\etab)$ and thus $\etab$-almost everywhere along a subsequence. By Fatou's lemma, we have that
    \begin{align*}
    	\liminf_{k \to \infty}\int_\bbG f_k \log f_k \di \etab \ge \int_\bbG f \log f \di \etab.
    \end{align*}
    
    We next show an upper estimate for the $\limsup$ of the right-hand side in \cref{eq:log-sob-lipschitz} with $\wh f_k$ in place of $f$. Note that, by left-invariance of the Carnot--Caratheodory distance, for $\bx_1, \bx_2 \in \bbG$, we have
    \[
    \frac{|\wh f_k(\bx_1)-\wh f_k(\bx_2)|}{d_\cc(\bx_1,\bx_2)}\leq
    \int_\bbG \rho_k(\by) \frac{|f(\by^{-1}\bx_1)- f(\by^{-1}\bx_1)|}{d_\cc(\bx_1,\bx_2)}\di \by
    \leq   \int_\bbG \rho_k(\by) \frac{|f(\by^{-1}\bx_1)- f(\by^{-1}\bx_1)|}{d_\cc(\by^{-1}\bx_1,\by^{-1}\bx_2)}\di \by.
    \]
    Thus, after passing to the $\limsup$ for $\bx_1\to\bx_2=\bx$, 
    we obtain $\mathrm{Lip}_\bbG \wh f_k(\bx) \leq \rho_k \star \mathrm{Lip}_\bbG f(\bx)$,
    or equivalently $|\nabla_\bbG \wh f_k|_\bbG(\bx) \leq \rho_k\star|\nabla_\bbG f|_\bbG (\bx)$.
    With this estimate, the Cauchy--Schwarz inequality gives
    \[
    |\nabla_\bbG \wh f_k|_\bbG^2\leq \Big[ \rho_k\star\Big(\chi_{\{f>0\}}\sqrt{f}\frac{|\nabla_\bbG f|_\bbG}{\sqrt{f}}\Big)\Big]^2
    \leq f_k \Big(\rho_k\star \frac{|\nabla_\bbG f|_\bbG^2}{f}\chi_{\{f>0\}}\Big).
    \]
    Multiplying $f_k$ by the truncation $\chi_k$ and applying the product rule, we estimate
    \begin{align*}
    	\limsup_{k \to \infty}\int_{\{f_k > 0\}} \frac{|\nabla_\bbG f_k|_\bbG^2}{f_k}\di \etab & \leq \limsup_{k \to \infty}\int_{\{\wh f_k > 0\}} \frac{|\nabla_\bbG \wh f_k|_\bbG^2}{\wh f_k}\di \etab\\
    	& \leq \limsup_{k \to \infty} \int_{\bbG} \rho_k\star\bigg(\frac{|\nabla_\bbG f|_\bbG^2}{f}\chi_{\{f > 0\}}\bigg)\di \etab \leq \int_{\{f > 0\}} \frac{|\nabla_\bbG f|_\bbG^2}{f}\di \etab.
    \end{align*}
       	    
    Finally, define $I_k \coloneqq \int_\bbG f_k \di \etab$ and the normalised function $\wt f_k \coloneqq f_k / I_k $, for each $k \in \N$. Then, by \eqref{eq:log-sob}, 
    \begin{align*}
    	2 \alpha \int_\bbG f_k \log f_k \di \etab & = 2 \alpha I_k \int_\bbG \wt f_k \log \wt f_k \di \etab + 2 \alpha I_k \log I_k\\
    	& \leq I_k \int_{\{\wt f_k > 0\}}\frac{|\nabla_\bbG \wt f_k|_\bbG^2}{\wt f_k} \di \etab + 2 \alpha I_k \log I_k
    	= \int_{\{f_k > 0\}} \frac{|\nabla_\bbG f_k|_\bbG^2}{f_k} \di \etab + 2 \alpha I_k \log I_k.
    \end{align*}
    We conclude that
    \[
    	2 \alpha  \int_\bbG f \log f \di \etab \le 
        \liminf_{k \to \infty} 2 \alpha \int_\bbG f_k \log f_k \di \etab
        \leq \limsup_{k\to\infty}
        \int_{\{f_k > 0\}} \frac{|\nabla_\bbG f_k|_\bbG^2}{f_k}\di \etab
        \leq \int_{\{f > 0\}} \frac{|\nabla_\bbG f|_\bbG^2}{f}\di \etab.
    \]
    Thus, the log-Sobolev inequality for Lipschitz-continuous functions is established.
    
    Finally, since $(\bbG, d_\cc, \etab)$ is a metric measure space in the sense of \cite{GiLe13}, we conclude that $\etab \in \Tc_2(\bbG, d_\cc, \alpha)$ by \cite[Theorem 5.2]{GiLe13}.
\end{proof}

\subsection{From heat semigroup estimates to Talagrand}
We now specialise to the case of the heat kernel measure $\bmu_1$ and give a sufficient condition for $\bmu_1 \in \Tc_2(\bbG, d_\cc, \alpha)$, for some $\alpha \in (0, \infty)$.

As noted in \cite[Section 5]{El10} and \cite[Remark 6.6]{BaBaBoCh08}, one can deduce the log-Sobolev inequality \eqref{eq:log-sob} for $\bmu_1$ from the following heat semigroup estimate: there exists a constant $K \in (0, \infty)$ such that
\begin{align}\label{eq:heat-semigroup-estimate}
	|\nabla_\bbG P_t f|_\bbG \leq K P_t(|\nabla_\bbG f|_\bbG), \quad \text{for all} \; f \in C^\infty_c(\bbG, \R), \ t \geq 0,
\end{align}
where $P_t$ is the heat semigroup defined in \Cref{eq:semigroup}.
Indeed, for the Heisenberg group $\bbG = \bbH = \bbH^1$, \cite[Th\'eor\`eme 1.1]{Li06} proves the estimate \eqref{eq:heat-semigroup-estimate}, and \cite[Corollaire 1.2]{Li06} states that \eqref{eq:log-sob} holds as a direct consequence, following the arguments in \cite[Th\'eor\`eme 5.4.7]{logsob-fr-00}. More generally, for $\bbG = \bbH^n$, \cite[Theorem 7.3]{HebZeg2010CIMM} and \cite[Theorem 6.1]{BaBaBoCh08} prove that \eqref{eq:log-sob} holds, again relying on the heat semigroup estimate \eqref{eq:heat-semigroup-estimate}.
For completeness, we provide a proof in \Cref{thm:heat-estimate-t2} that, for any step-$2$ Carnot group, the heat semigroup estimate \eqref{eq:heat-semigroup-estimate} implies the log-Sobolev inequality \eqref{eq:log-sob}. Thanks to \Cref{thm:gigli-ledoux}, the $\Tc_2$ inequality also follows.

\begin{theorem}\label{thm:heat-estimate-t2}
	Let $\bbG$ be a step-$2$	 Carnot group and suppose that there exists $K \in (0, \infty)$ such that the heat semigroup $P$ on $\bbG$ satisfies the estimate \eqref{eq:heat-semigroup-estimate} for all $t \in [0, 1]$. Let $\alpha = \tfrac{1}{2K^2}$. Then the heat kernel measure $\bmu_1$ on $\bbG$ satisfies the log-Sobolev inequality \eqref{eq:log-sob} with constant $\alpha$, and $\bmu_1$ satisfies the $\Tc_2$ inequality $\bmu_1 \in \Tc_2(\bbG, d_\cc, \alpha)$.
\end{theorem}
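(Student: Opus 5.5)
The plan is the classical Bakry--Émery semigroup interpolation argument (as in \cite[Th\'eor\`eme 5.4.7]{logsob-fr-00}) for the entropy of $P_1$, with the commutation estimate \eqref{eq:heat-semigroup-estimate} replacing the usual $\Gamma_2$ curvature bound. Once \eqref{eq:log-sob} is established for $\bmu_1$ with constant $\alpha=\tfrac1{2K^2}$, the $\Tc_2$ inequality follows directly from \cref{thm:gigli-ledoux}. This is legitimate because $\bmu_1$ has the smooth positive density $\mathfrak p_1$, so $\bmu_1\ll\mathcal L^m$. The whole proof therefore reduces to showing the log-Sobolev inequality.

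Fix $f\in C^\infty_c(\bbG,[0,\infty))$ with $\int f\di\bmu_1 = P_1 f(\boldsymbol e)=1$. First replace $f$ by $f+\varepsilon$ so that all logarithms are smooth; at the end let $\varepsilon\downarrow 0$, using monotone or dominated convergence on both sides. Set
\[
\phi(s)=P_s\bigl((P_{1-s}f)\log P_{1-s}f\bigr)(\boldsymbol e),\qquad s\in[0,1].
\]
Then $\phi(1)-\phi(0)=\int f\log f\di\bmu_1 - 0$, since $P_1 f(\boldsymbol e)=1$. Differentiating, using $\partial_s P_s=\Delta_\bbG P_s$ and the chain rule $\Delta_\bbG(g\log g)-(1+\log g)\Delta_\bbG g=\tfrac12|\nabla_\bbG g|^2/g$, gives
\[
\phi'(s)=\tfrac12 P_s\Bigl(\tfrac{|\nabla_\bbG P_{1-s}f|_\bbG^2}{P_{1-s}f}\Bigr)(\boldsymbol e).
\]
(The factor $\tfrac12$ comes from $\Delta_\bbG=\tfrac12\sum V_i^2$.) Interchanging derivative and semigroup is justified by the Gaussian-type heat kernel bounds and the smoothness of $P_tf$ for $f$ compactly supported and bounded below.

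Next apply \eqref{eq:heat-semigroup-estimate} with $t=1-s$. Write $|\nabla_\bbG f|=\sqrt f\cdot|\nabla_\bbG f|/\sqrt f$ and use Cauchy--Schwarz for the Markov kernel $P_{1-s}$:
\[
|\nabla_\bbG P_{1-s}f|^2\le K^2\bigl(P_{1-s}|\nabla_\bbG f|\bigr)^2\le K^2\,P_{1-s}f\;P_{1-s}\Bigl(\tfrac{|\nabla_\bbG f|^2}{f}\Bigr).
\]
Hence $\phi'(s)\le \tfrac{K^2}{2}P_1\bigl(|\nabla_\bbG f|^2/f\bigr)(\boldsymbol e)$ by the semigroup property. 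Integrating over $s\in[0,1]$ yields
\[
\int f\log f\di\bmu_1\le \tfrac{K^2}{2}\int\frac{|\nabla_\bbG f|^2}{f}\di\bmu_1,
\]
which is exactly \eqref{eq:log-sob} with $2\alpha=1/K^2$. On $\{f=0\}$ the gradient of a smooth nonnegative function vanishes, so the $\varepsilon\to0$ limit of $|\nabla f|^2/(f+\varepsilon)$ is controlled by the integrand restricted to $\{f>0\}$.

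The main obstacle is the technical justification of the differentiation of $\phi$ and of the $\varepsilon$-regularisation. One must check that $P_{1-s}(f+\varepsilon)$ is smooth, bounded below by $\varepsilon$, and has horizontal derivatives bounded uniformly in $s$, so that differentiation under $P_s$ is allowed. For this I would rely on hypoellipticity and on the left-invariance of $P_t$, which commutes with the right-invariant structure appropriately, so that derivatives of $P_tf$ are $P_t$ applied to derivatives of $f$ in the right-invariant fields, together with standard Gaussian upper bounds for $\mathfrak p_t$ and its derivatives. With this in hand, the rest is the algebra above.
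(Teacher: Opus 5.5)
Your proposal is correct and is essentially the paper's proof. The paper runs the same interpolation $s\mapsto P_s\phi(P_{t-s}f)$ for a general $\phi$ with $\phi''>0$ and $-1/\phi''$ convex, using Cauchy--Schwarz plus Jensen; for $\phi(x)=x\log x$ this collapses to exactly your computation, after which the paper evaluates at the identity and invokes \cref{thm:gigli-ledoux} (your $\varepsilon$-regularisation handles the zero set of $f$ more carefully than the paper does). One small correction: your final inequality $\int f\log f\di\bmu_1\le\tfrac{K^2}{2}\int|\nabla_\bbG f|_\bbG^2/f\di\bmu_1$ is \eqref{eq:log-sob} with $2\alpha=2/K^2$, not $1/K^2$, so you have actually proved the stronger constant $\alpha=1/K^2$, which implies the stated $\alpha=\tfrac{1}{2K^2}$ because the right-hand side is nonnegative; the paper's identity for $\partial_sP_s\phi(P_{t-s}f)$ omits the factor $\tfrac12$ coming from $\Delta_\bbG=\tfrac12\sum_i V_i^2$, which is why it only records $\tfrac{1}{2K^2}$.
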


\begin{proof}
	Suppose that $P_t$ satisfies \eqref{eq:heat-semigroup-estimate} for all $t \in [0, 1]$.
	Let $\phi \in C^2(I, \R)$ for some interval $I \subset \R$, and suppose moreover that $\phi^{\prime \prime} > 0$ and the function $-1/\phi^{\prime \prime}$ is convex. Let $f \in C^\infty_c(\bbG, \R)$ and let $t \in [0, 1]$.  Then, by the heat equation and chain rule for the sub-Laplacian, for any $s \in [0, t]$,
	\begin{align*}
		\partial_s P_s \phi(P_{t - s}f) & = P_s\bigl(\Delta_\bbG \phi(P_{t - s} f) - \phi^\prime(P_{t - s}f)\Delta_\bbG P_{t - s} f\bigr)\\
		& = P_s\bigl(\phi^{\prime \prime}(P_{t - s}f)|\nabla_\bbG P_{t - s} f|_\bbG^2\bigr).
	\end{align*}
	The heat semigroup estimate \eqref{eq:heat-semigroup-estimate} and the Cauchy--Schwarz inequality imply that
	\begin{align*}
		|\nabla_\bbG P_{t - s} f|_\bbG^2 \leq K^2 \big( P_{t - s}(|\nabla_\bbG f|_\bbG)\big)^2
		& = K^2 \Big( P_{t - s}\bigl(|\nabla_\bbG f|_\bbG\sqrt{\phi^{\prime \prime}(f)} \cdot 1/\sqrt{\phi^{\prime \prime}(f)}\bigr)\Big)^2\\
		& \leq K^2 P_{t - s}(|\nabla_\bbG f|_\bbG^2\phi^{\prime \prime}(f)) P_{t - s}(1/\phi^{\prime \prime}(f)),
	\end{align*}
	and, by Jensen's inequality,
	\begin{align*}
		\phi^{\prime \prime}(P_{t - s}f) & \leq \frac{-1}{P_{t - s}( - 1/\phi^{\prime \prime}(f))} = \frac{1}{P_{t - s}(1/\phi^{\prime \prime}(f))}.
	\end{align*}
	Hence
	\begin{align*}
		\partial_s P_s \phi(P_{t - s}f) & = P_s\bigl(\phi^{\prime \prime}(P_{t - s}f)|\nabla_\bbG P_{t - s} f|_\bbG^2\bigr)\\
		& \leq K^2 P_s P_{t - s}(|\nabla_\bbG f|_\bbG^2\phi^{\prime \prime}(f))
		 = K^2 P_t (|\nabla_\bbG f|_\bbG^2\phi^{\prime \prime}(f)),
	\end{align*}
	and integrating gives
	\begin{align*}
		P_t \phi(f) - \phi(P_t f) & = \int_0^t \partial_s P_s \phi(P_{t - s}f) \di s
		\leq K^2 t P_t (|\nabla_\bbG f|_\bbG^2\phi^{\prime \prime}(f)).
	\end{align*}
	
	Now take $\phi \colon (0, \infty) \to \R$ to be $\phi(x) = x \log x$ for all  $x \in (0, \infty)$, and suppose that $f \colon \bbG \to (0, \infty)$. Then we have the following form of the log-Sobolev inequality:
	\begin{align*}
		P_t (f \log f) - P_t f \log (P_t f) \leq K^2 t P_t \frac{|\nabla_\bbG f|_\bbG^2}{f}.
	\end{align*}
	To arrive at \eqref{eq:log-sob}, we set $t = 1$, evaluate both sides of the inequality at the identity, and additionally suppose that $\int_\bbG f \di \bmu_1 = 1$. Then
	\begin{align*}
		\int_\bbG f \log f \di \bmu_1 \leq K^2 \int_\bbG\frac{|\nabla_\bbG f|_\bbG^2}{f} \di \bmu_1.
	\end{align*}
	Note that, allowing $f \colon \bbG \to [0, \infty)$, we have $\int_\bbG f \log f \di \bmu_1 = \int_{\{f > 0\}} f \log f \di \bmu_1$. Thus \eqref{eq:log-sob} holds with constant $\alpha = \tfrac{1}{2 K^2}$.
	
	Applying \Cref{thm:gigli-ledoux}, we further have that $\bmu_1 \in \Tc_2(\bbG, d_\cc, \alpha)$.
\end{proof}

\subsection{Talagrand on Heisenberg-type groups}\label{sec:t2-h-type}
For any H-type group $\bbG$, \cite[Theorem 2.4]{El10} proves that the heat semigroup estimate \eqref{eq:heat-semigroup-estimate} is satisfied.
We thus have the following corollary of \cref{thm:heat-estimate-t2}.

\begin{theorem}\label{thm:t2-h-type}
    Let $\bbG$ be an H-type group. Then there exists $\alpha > 0$ such that the heat kernel measure $\bmu_1$ on $\bbG$ satisfies the log-Sobolev inequality \eqref{eq:log-sob} with constant $\alpha$, and $\bmu_1$ satisfies the $\Tc_2$ inequality $\bmu_1 \in \Tc_2(\bbH, d_\cc, \alpha)$.
\end{theorem}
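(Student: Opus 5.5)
The plan is to obtain \Cref{thm:t2-h-type} as a direct corollary of \Cref{thm:heat-estimate-t2}, which in turn feeds into \Cref{thm:gigli-ledoux}. All that needs checking is that the hypothesis of \Cref{thm:heat-estimate-t2} holds on an H-type group. That hypothesis is the heat semigroup estimate \eqref{eq:heat-semigroup-estimate} for all $t\in[0,1]$ and all $f\in C^\infty_c(\bbG,\R)$, with a constant $K\in(0,\infty)$ independent of $t$.

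First, I would invoke \cite[Theorem 2.4]{El10}, which gives \eqref{eq:heat-semigroup-estimate} on any H-type group for $f\in C^\infty_c(\bbG)$ and all $t\geq 0$, with some constant $K$ depending only on $\bbG$. Care is needed to confirm that the normalisations match ours. The sub-Laplacian here is $\Delta_\bbG=\tfrac12\sum V_i^2$, and the horizontal gradient uses an orthonormal basis $V_1,\dots,V_{d_1}$ of $\mathcal V_1$. If \cite{El10} instead uses $\sum V_i^2$ or a differently scaled basis, then $P_t$ becomes $P_{ct}$ for some $c>0$. Since the cited estimate is uniform in $t\geq0$, this time change costs nothing. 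A rescaling of the vector fields only changes $K$ by a multiplicative factor through the equivalence of the norms on $\mathcal V_1$. I would also check that the H-type structure in \Cref{rem:HtypeGroups}, namely $\sfJ_\bbG(z)^2=-|z|^2\id$ together with the compatibility with $\SCO$, is the one assumed in \cite{El10}. This is the standard Kaplan definition, so no issue is expected. Note that $K\geq1$ necessarily holds by letting $t\to0$, though this is not needed.

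Second, with \eqref{eq:heat-semigroup-estimate} in hand, \Cref{thm:heat-estimate-t2} yields \eqref{eq:log-sob} for $\bmu_1$ with $\alpha=1/(2K^2)>0$. Via \Cref{thm:gigli-ledoux}, it also yields $\bmu_1\in\Tc_2(\bbG,d_\cc,\alpha)$; the statement writes $\bbH$ for the group, which should be read as $\bbG$.

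The main (and essentially only) obstacle is the bookkeeping of conventions in the first step. This means verifying that the constant in \cite{El10} is genuinely uniform in $t$ for our normalisation of $\Delta_\bbG$ and $|\cdot|_\bbG$, and that the class of test functions there includes $C^\infty_c(\bbG)$. The rest is a direct concatenation of results already proved.
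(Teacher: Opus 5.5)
Your proposal is correct and follows the paper's proof: the paper also takes the heat semigroup estimate \eqref{eq:heat-semigroup-estimate} on H-type groups from \cite[Theorem 2.4]{El10} and concludes via \Cref{thm:heat-estimate-t2} (and hence \Cref{thm:gigli-ledoux}) with $\alpha = \frac{1}{2K^2}$. Your extra checks on normalisations and your reading of $\bbH$ as $\bbG$ in the statement are sensible, but they are not part of the paper's argument.
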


\begin{proof}
    By \cite[Theorem 2.4]{El10}, the estimate \eqref{eq:heat-semigroup-estimate} holds on $\bbG$ with some constant $K$. Thus the result follows from \cref{thm:heat-estimate-t2} with $\alpha = \frac{1}{2 K^2}$.
\end{proof}

 We remark that the best possible constant in \Cref{thm:t2-h-type} is $\alpha\leq 1/2$, since \cite[Proposition 4.1]{El10} shows that the optimal constant in \Cref{eq:heat-semigroup-estimate} satisfies $K \ge \sqrt{\tfrac{3 d_1 + 5}{3 d_1 + 1}}$.

\section{Talagrand for Brownian motion on Carnot groups -- Bottom-up approach}\label{sec:bottom-up}

In Section \ref{sec:heat-kernel},
we discussed the availability of Talagrand transportation inequalities
on Carnot groups, as a consequence of log-Sobolev inequalities and heat kernel estimates. 
In this section, we demonstrate 
that we can transfer the $\Tc_2$ inequality for the heat kernel measure on a Carnot group, via a rescaling and tensorisation argument, to a $\Tc_2$ inequality on the associated path space; see \cref{ss:limitPassageTalagrand}. 
We highlight that this approach yields interesting insights into
optimal transport problems in the non-commutative sub-Riemannian setting that distinguishes it from the Euclidean case; see \cref{sec:projection-blow-up,sec:riemann}.
Finally, we show that the cost function defined in \eqref{eq:rp-cost} on the path space arises naturally 
as the $\Gamma$-limit of discretised cost functions
based on the Carnot--Caratheodory distance on the Carnot group; see \cref{sec:gamma-conv}.

Throughout this section, let $\bbG \cong \R^{d_1} \oplus \R^{d_2}$ be a step-$2$ Carnot group and set $d = d_1$; see \cref{sec:carnot-step-2}. 
Recall that $\bB$ denotes Brownian motion on $\bbG$, with law $\bmu = \Law(\bB)$ and time marginals $\bmu_t = \Law(\bB_t)$ for $t \in [0, 1]$.

\subsection{From Talagrand on Carnot groups to Talagrand on path space}
\label{ss:limitPassageTalagrand}
The main result of this section is that the $\Tc_2$ inequality for $\bmu_1$ on the group $\bbG$ implies the $\Tc_2$ inequality for $\bmu$ on the space $\bOmega_\bbG$ of continuous $\bbG$-valued paths started from the origin.

We first show that the heat kernel measure satisfies the following scaling property.

\begin{lemma}\label{lem:t2-dilation}
    Suppose that $\bmu_1 \in \Tc_2(\bbG, d_\cc, \alpha)$, for some $\alpha \in (0, \infty)$. Then, for any $t \in (0, 1]$, $\bmu_t \in \Tc_2(\bbG, d_\cc, \alpha t^{-1})$.
\end{lemma}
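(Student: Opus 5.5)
The plan is to use the scaling invariance of Brownian motion on $\bbG$ under the dilations $\delta_s$ from \eqref{eq:dilation}, together with the homogeneity of $d_\cc$. First, $\bmu_t = (\delta_{\sqrt t})_\sharp \bmu_1$. This follows from Brownian scaling: $(B_{ts})_{s}$ has the law of $(\sqrt t B_s)_s$, and applying this to the explicit SDE $\D \bB^{(1)} = \D B$, $\D \bB^{(2)} = \tfrac12 \SCO(B\otimes \D B)$ gives $\bB_t \overset{d}{=} (\sqrt t B_1, t\,\bB^{(2)}_1) = \delta_{\sqrt t}\bB_1$. Equivalently, the heat kernel satisfies $\mathfrak p_t(\bx) = t^{-Q/2}\mathfrak p_1(\delta_{1/\sqrt t}\bx)$, consistent with \eqref{eq:homogeneity}.

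Next, fix $t\in(0,1]$ and $\bnu\in\Pc(\bbG)$, and write $s=\sqrt t$. Since $\delta_s$ is a bijection (a group automorphism) with measurable inverse $\delta_{1/s}$, set $\bnu' \coloneqq (\delta_{1/s})_\sharp \bnu$, so that $\bnu = (\delta_s)_\sharp\bnu'$. Relative entropy is invariant under pushforward by a measurable bijection with measurable inverse, so $H(\bnu\|\bmu_t) = H(\bnu'\|\bmu_1)$. Indeed, if $\bnu'\ll\bmu_1$ with density $\rho$, then $\D\bnu/\D\bmu_t = \rho\circ\delta_{1/s}$; conversely, absolute continuity transfers back in the same way, so both sides are simultaneously infinite.

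For the transport side, the map $(\delta_s\times\delta_s)_\sharp$ is a bijection from $\Pi(\bmu_1,\bnu')$ onto $\Pi(\bmu_t,\bnu)$. Homogeneity of $d_\cc$ gives $d_\cc(\delta_s\bx,\delta_s\by)=s\,d_\cc(\bx,\by)$: the dilation $\delta_s$ maps a horizontal curve with horizontal derivative $\dot h$ to one with horizontal derivative $s\dot h$, and it maps horizontal curves bijectively onto horizontal curves. Hence $\T_{d_\cc,2}^2(\bmu_t,\bnu) = s^2\,\T_{d_\cc,2}^2(\bmu_1,\bnu') \le t\cdot\frac{2}{\alpha}H(\bnu'\|\bmu_1) = \frac{2}{\alpha t^{-1}}H(\bnu\|\bmu_t)$, which is exactly the claim.

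The only step that needs some care is the identification $\bmu_t=(\delta_{\sqrt t})_\sharp\bmu_1$, i.e.\ checking that the second level scales by $t$. This is a direct computation: substituting $r = tu$ in the Itô integral $\int_0^t B_r\otimes \D B_r$ and using $B_{tu}\overset{d}{=}\sqrt t\, B_u$ jointly as processes gives the factor $t$. Everything else is formal.
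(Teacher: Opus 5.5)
Your proof is correct and has the same structure as the paper's: identify $\bmu_t = (\delta_{\sqrt t})_\sharp \bmu_1$, then transfer the inequality through the dilation, which scales $d_\cc$ by $\sqrt t$. The differences are minor: you derive the scaling identity from Brownian scaling of the SDE rather than from the heat-kernel scaling $\mathfrak p_{s^2 t}(\delta_s \bx) = s^{-Q}\mathfrak p_t(\bx)$ that the paper cites, and you do by hand, for the bijection $\delta_{\sqrt t}$ (where entropy and couplings transfer exactly), what the paper gets by invoking the contraction principle \Cref{lem:contraction} with constant $L = \sqrt t$.
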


\begin{proof}
    We claim that the heat kernel measure on $\bbG$ satisfies the scaling
    \begin{equation}\label{eq:scaling}
        \bmu_{t} = (\delta_{s^{-1}})_{\sharp}\bmu_{s^2 t},
    \end{equation} for any $s > 0$, $t \in [0, 1]$. To see this, recall that $\mathfrak{p} \colon (0, \infty) \times \bbG \to (0, \infty)$ denotes the heat kernel on $\bbG$, $Q$ the homogeneous dimension of $\bbG$, and $\mathcal L^m$ the Lebesgue measure on $\bbG$. Then, as in \cref{eq:homogeneity}, $(\delta_s)_{\sharp}\mathcal L^m(\D \bx) = s^{-Q} \mathcal L^m(\D \bx)$,  for any $s >0$. Moreover, by \cite[Theorem 2.3]{AS2019HEFC}, for any $s>0$, $t \in [0, 1]$, and $\bx \in \bbG$, we have that $\mathfrak{p}_{s^2 t}(\delta_s \bx) = s^{-Q} \mathfrak{p}_t(\bx)$. Therefore, for any Borel set $A \subseteq \bbG$,
    \begin{align*}
        \mu_t (A) = \int_A \mathfrak{p}_t(\bx) \mathcal L^m(\D \bx)
        & = s^Q \int_A (\mathfrak{p}_{s^2 t} \circ \delta_s)(\bx) \mathcal L^m(\D \bx)\\
        & = s^Q \int_{\delta_{s^{-1}}(A)} \mathfrak{p}_{s^2 t}(\bx) (\delta_s)_{\sharp} \mathcal L^m (\D \bx)
        = \int_{\delta_{s^{-1}}(A)} \mathfrak{p}_{s^2 t}(\bx) \mathcal L^m(\D \bx) = (\delta_{s^{-1}})_{\sharp} \mu_{s^2 t}(A).
    \end{align*}
    This proves the claim. Now fix $t \in (0, 1]$. 
    Setting $s = t^{- \frac{1}{2}}$ in \cref{eq:scaling}, we have $\bmu_t = (\delta_{\sqrt t})_{\sharp}\bmu_1$.
    The map $\delta_{\sqrt{t}} \colon \bbG \to \bbG$ is $L$-Lipschitz with Lipschitz constant $L = \sqrt t$.
    Thus, since $\bmu_1 \in \Tc_2(\bbG, d_\cc, \alpha)$, \cref{lem:contraction} implies that $\bmu_t \in \Tc_2(\bbG, d_\cc, \alpha t^{-1})$.
\end{proof}

We now consider the product space $\bbG^{2^n}= \bbG\times \cdots\times \bbG$, for some $n \in \N$, and apply the dimension-free tensorisation property of the $\Tc_2$ inequality. Define $d_{\cc, n} \colon \bbG^{2^n} \times \bbG^{2^n} \to [0, \infty)$ by
\begin{equation*}
	d_{\cc, n}^2(\bx, \ol\bx) \coloneqq 2^n \sum_{i = 1}^{2^n} d_\cc^2(\bx_i, \ol \bx_i),
\end{equation*}
for $\bx = (\bx_1, \dotsc, \bx_n)$, $\ol \bx = (\ol \bx_1, \dotsc, \ol \bx_n) \in \bbG^{2^n}$.
We have the following tensorisation result.

\begin{proposition}\label{prop:t2-tensorisation}
     Suppose that $\bmu_1 \in \Tc_2(\bbG, d_\cc, \alpha)$, for some $\alpha \in (0, \infty)$. Then, for any $n \in \N$, $\bmu_{2^{-n}}^{\otimes 2^n} \in \Tc_2(\bbG^{2^n}, d_{\cc, n}, \alpha)$.
\end{proposition}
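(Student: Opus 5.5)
The plan is to combine the scaling result \Cref{lem:t2-dilation} with the classical dimension-free tensorisation of $\Tc_2$ inequalities (Marton's argument). By \Cref{lem:t2-dilation} with $t = 2^{-n}$, we have $\bmu_{2^{-n}} \in \Tc_2(\bbG, d_\cc, \alpha 2^n)$. In other words, for every $\nu \in \Pc(\bbG)$,
\[
	2^n \, \T_{d_\cc,2}^2(\bmu_{2^{-n}}, \nu) \le \frac{2}{\alpha} H(\nu \| \bmu_{2^{-n}}).
\]
This is precisely the statement $\bmu_{2^{-n}} \in \Tc_2(\bbG, 2^{n/2} d_\cc, \alpha)$. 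Since $d_{\cc,n}^2(\bx, \ol\bx) = \sum_{i=1}^{2^n} (2^{n/2} d_\cc(\bx_i, \ol\bx_i))^2$ is the $\ell^2$-sum of these rescaled costs, it remains to tensorise with the constant $\alpha$ preserved.

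For the tensorisation we argue by induction on the number $N$ of factors, or directly via disintegration. Let $\nu \in \Pc(\bbG^N)$ with $H(\nu \| \eta^{\otimes N}) < \infty$, where $\eta = \bmu_{2^{-n}}$; otherwise there is nothing to prove. Write $\nu = \nu_1(\D x_1)\, \nu_2(\D x_2 \mid x_1) \cdots \nu_N(\D x_N \mid x_1, \dotsc, x_{N-1})$. The chain rule for relative entropy gives
\[
	H(\nu \| \eta^{\otimes N}) = \sum_{i=1}^N \int H\bigl(\nu_i(\cdot \mid x_{<i}) \,\|\, \eta\bigr) \di \nu(x).
\]
For each $i$ and each $x_{<i}$, choose an optimal coupling $\pi_i(\cdot \mid x_{<i}, y_{<i})$ of $\eta$ and $\nu_i(\cdot \mid x_{<i})$ for the cost $2^n d_\cc^2$. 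Optimal plans exist because $d_\cc^2$ is continuous and $\bbG$ is Polish. These conditional plans depend only on the $\nu$-conditioning variables, and since the first marginal $\eta$ is a product, the glued coupling $\pi$ has marginals $\eta^{\otimes N}$ and $\nu$. Its cost is then bounded termwise by the one-dimensional inequality applied to each conditional, and summing gives $\int d_{\cc,n}^2 \di \pi \le \tfrac{2}{\alpha} H(\nu \| \eta^{\otimes N})$.

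We expect the main obstacle to be the technical one of choosing the conditional optimal couplings \emph{measurably} in the conditioning variables, so that the gluing defines a genuine probability measure on $\bbG^N \times \bbG^N$. We would invoke a measurable selection theorem for optimal transport plans; alternatively, we would simply cite the standard tensorisation lemma (e.g.\ Gozlan--L\'eonard or Villani), which holds on Polish spaces for any lower semicontinuous cost, and note that its hypotheses are met by $2^{n/2} d_\cc$ on $(\bbG, d_\cc)$. Applying it with $N = 2^n$ completes the argument.
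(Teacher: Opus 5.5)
Your proof is correct and takes the paper's route: you use \Cref{lem:t2-dilation} to get $\bmu_{2^{-n}} \in \Tc_2(\bbG, d_\cc, \alpha 2^n)$ and then apply the dimension-free tensorisation of Gozlan--L\'eonard, sketching Marton's gluing where the paper simply cites it. The only difference is the order of rescaling, which is cosmetic: you absorb the factor $2^n$ into the cost $2^{n/2} d_\cc$ before tensorising, whereas the paper tensorises with $d_\cc$ and constant $\alpha 2^n$ and then rescales using the contraction principle with $\psi = \id$ and $L = 2^{n/2}$.
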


\begin{proof}
    Fix $n \in \N$. First note that $\bmu_{2^{-n}} \in \Tc_2(\bbG, d_\cc, \alpha 2^n)$, by \cref{lem:t2-dilation}. Define $\wt d_{\cc, n} \colon \bbG^{2^n} \to [0, \infty)$ by
    \begin{equation*}
        \wt{d}_{\cc, n}^2(\bx, \ol \bx) = \sum_{i = 1}^{2^n} d_\cc^2(\bx_i, \ol \bx_i),
    \end{equation*}
    for $\bx = (\bx_1, \dotsc, \bx_n)$, $\ol \bx = (\ol \bx_1, \dotsc, \ol \bx_n) \in \bbG^{2^n}$. Since $(\bbG, d_\cc)$ is a Polish space, \cite[Theorem 6]{GoLe07} implies that $\Tc_2(\bbG, d_\cc, \alpha 2^n)$ has the dimension-free tensorisation property; i.e.\ $\bmu_{2^{-n}}^{\otimes 2^n} \in \Tc_2(\bbG^{2^n}, \wt d_{\cc, n}, \alpha 2^n)$. Applying \cref{lem:contraction} with $\psi$ equal to the identity, we conclude that $\bmu_{2^{-n}}^{\otimes 2^n} \in \Tc_2(\bbG^{2^n}, d_{\cc, n}, \alpha)$.
\end{proof}

We next prove a relative entropy bound for measures on the path space.
For $n \in \N$, set $t_k^n = k 2^{-n}$ for $k \in \{0, \dotsc, 2^n\}$, and define $\Gamma^n \colon \bOmega_\bbG \to \bbG^{2^n}$ to be the projection of paths to their dyadic increments; i.e.~$\Gamma^n \bomega = (\bomega_{0, t_1^n}, \bomega_{t_1^n, t_2^n}, \dotsc, \bomega_{t_{2^n - 1}^n, 1})$, for $\bomega \in \bOmega_\bbG$. Then define a cost function $C_n \colon \bOmega_\bbG \times \bOmega_\bbG \to [0, \infty)$ by
\begin{equation}\label{eq:defCostDisc}
	C_n(\bomega, \ol \bomega) \coloneqq d_{\cc, n}(\Gamma^n\bomega,\Gamma^n\ol\bomega), \quad \bomega, \ol\bomega \in \bOmega_\bbG.
\end{equation}

\begin{lemma}\label{lem:rel-entropy-bound}
    Suppose that $\bmu_1 \in \Tc_2(\bbG, d_\cc, \alpha)$, for some $\alpha \in (0, \infty)$, and let $\bnu$ be a Borel probability measure on $\bOmega_\bbG$. Define $\bmu^n = \Gamma^n_\sharp \bmu$, $\bnu^n = \Gamma^n_\sharp \bnu \in \Pc(\bbG^{2^n})$.
    Then $H(\bnu^n \| \bmu^n) \nearrow H(\bnu \| \bmu)$ as $n \to \infty$ and, for any $n \in \N$,
    \begin{equation*}
         \T_{C_n, 2}(\bmu, \bnu) \leq \sqrt{\frac{2}{\alpha} H(\bnu \| \bmu)};
    \end{equation*}
    i.e.\ $\bmu \in \Tc_2(\bOmega_\bbG, C_n, \alpha)$.
\end{lemma}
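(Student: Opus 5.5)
The plan has two parts: first establish the monotone convergence of the relative entropies, then deduce the transport bound from \Cref{prop:t2-tensorisation} by pulling back couplings along $\Gamma^n$.

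\emph{Entropy convergence.} The key observation is that $\Gamma^n$ generates the $\sigma$-algebra $\mathcal G_n \coloneqq \sigma(\bomega_{t^n_k} : k \le 2^n)$. Indeed, the increments $\bomega_{t_{k-1}^n,t_k^n}$ and the values $\bomega_{t_k^n}=\bomega_{0,t_1^n}\cdots\bomega_{t_{k-1}^n,t_k^n}$ determine each other through group multiplication and inversion, which are continuous bijective operations. The filtration $(\mathcal G_n)_n$ is increasing, since dyadic grids are nested. By continuity of paths, $\bigvee_n \mathcal G_n$ is the Borel $\sigma$-algebra of $(\bOmega_\bbG,d_\infty)$, because this $\sigma$-algebra is generated by evaluations at dyadic times. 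Since $H(\bnu^n\|\bmu^n)$ equals the relative entropy of $\bnu$ with respect to $\bmu$ restricted to $\mathcal G_n$, the standard martingale/convergence property of relative entropy along increasing $\sigma$-algebras gives $H(\bnu^n\|\bmu^n)\nearrow H(\bnu\|\bmu)$. Concretely, one can use the Donsker--Varadhan variational formula: monotonicity follows from data processing, and the limit is obtained by approximating bounded Borel test functions by $\mathcal G_n$-measurable ones in $L^1(\bmu+\bnu)$. This also covers the case $H(\bnu\|\bmu)=\infty$.

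\emph{Transport bound.} Because Brownian motion on $\bbG$ has independent, stationary left increments with $\bB_{s,t}\sim\bmu_{t-s}$, we have $\bmu^n=\bmu_{2^{-n}}^{\otimes 2^n}$. \Cref{prop:t2-tensorisation} therefore gives $\T_{d_{\cc,n},2}(\bmu^n,\bnu^n)^2\le \frac2\alpha H(\bnu^n\|\bmu^n)\le\frac2\alpha H(\bnu\|\bmu)$.

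It remains to show $\T_{C_n,2}(\bmu,\bnu)\le \T_{d_{\cc,n},2}(\bmu^n,\bnu^n)$, that is, to lift couplings. Given $\lambda^n\in\Pi(\bmu^n,\bnu^n)$, disintegrate $\bmu=\int \bmu(\cdot\mid\Gamma^n=\bx)\,\bmu^n(\D\bx)$ and similarly for $\bnu$; these kernels exist since $\bOmega_\bbG$ is Polish. Define
\[
\blambda \coloneqq \int \bmu(\cdot\mid\Gamma^n=\bx)\otimes\bnu(\cdot\mid\Gamma^n=\ol\bx)\,\lambda^n(\D\bx,\D\ol\bx),
\]
which is the gluing lemma. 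Then $\blambda\in\Pi(\bmu,\bnu)$ and $(\Gamma^n\times\Gamma^n)_\sharp\blambda=\lambda^n$. Since $C_n=d_{\cc,n}\circ(\Gamma^n\times\Gamma^n)$, we get $\int C_n^2\,\D\blambda=\int d_{\cc,n}^2\,\D\lambda^n$. Taking the infimum over $\lambda^n$ yields the claim; measurability of $C_n$ follows from continuity of $\Gamma^n$.

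The main obstacle is the entropy convergence, specifically checking that $\bigvee_n\mathcal G_n$ is the full Borel $\sigma$-algebra and citing the appropriate limit theorem for relative entropy. The transport part is routine once the gluing is set up.
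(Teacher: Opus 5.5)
Your proof is correct. The transport half matches the paper's: $\bmu^n=\bmu_{2^{-n}}^{\otimes 2^n}$, then \Cref{prop:t2-tensorisation}, then $\T_{C_n,2}(\bmu,\bnu)=\T_{d_{\cc,n},2}(\bmu^n,\bnu^n)$. The paper states this last identity without justification. Your gluing construction supplies the nontrivial inequality $\le$, and the reverse inequality is immediate by pushing couplings forward under $\Gamma^n\times\Gamma^n$.

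The entropy half takes a genuinely different route. The paper gets monotonicity and the upper bound $H(\bnu^n\|\bmu^n)\le H(\bnu\|\bmu)$ from the representation $H(\bnu^n\|\bmu^n)=\inf\{H(\etab\|\bmu):\Gamma^n_\sharp\etab=\bnu^n\}$ of \cite[Lemma 2.1]{DjGuWu04}. It obtains the matching lower bound topologically, in three steps:
\begin{enumerate}
\item It pushes $\bmu^n,\bnu^n$ back to path space through a piecewise-linear interpolation map $R^n$ with $\Gamma^n\circ R^n=\id$, and notes that entropy is preserved under this injective map.
\item It uses $R^n\circ\Gamma^n\to\id$ pointwise to get $R^n_\sharp\bmu^n\rightharpoonup\bmu$ and $R^n_\sharp\bnu^n\rightharpoonup\bnu$.
\item It concludes by joint weak lower semicontinuity of $H$.
\end{enumerate}

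Your argument is purely measure-theoretic. You identify $\sigma(\Gamma^n)$ with the grid $\sigma$-algebra $\mathcal G_n$ and note that these increase to the Borel $\sigma$-algebra of $(\bOmega_\bbG,d_\infty)$. Donsker--Varadhan together with $L^1(\bmu+\bnu)$-approximation then gives the limit. Approximating by conditional expectations onto $\mathcal G_n$, or truncating arbitrary approximants at $\|f\|_\infty$, keeps the uniform sup-norm bound you need to pass to the limit in $\log\int e^{f_n}\,\mathrm{d}\bmu$.

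What your route buys is independence from any interpolation in the group. In the non-commutative setting the paper leaves the definition, injectivity and convergence of $R^n$ implicit, and you need no weak topology at all. The price is the standard identification of the Borel and cylinder $\sigma$-algebras on continuous paths. The paper's route instead reuses the weak-convergence and lower-semicontinuity tools it needs anyway in \cref{sec:gamma-conv}.
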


\begin{proof}
    By independence and stationarity of the increments of $\bB$, we have that $\bmu^n = \Gamma^n_{\sharp} \bmu = \bmu_{2^{-n}}^{\otimes 2^n}$ and thus, by \cref{prop:t2-tensorisation}, $\bmu^n \in \Tc_2(\bbG^{2^n}, d_{\cc, n}, \alpha)$.
	
	As shown in \cite[Lemma 2.1]{DjGuWu04},
	\begin{equation}\label{eq:rel-ent-pusforward}
		H(\bnu^n \| \bmu^n) = \inf \Big\{\, H(\etab \| \bmu) : \etab \in \Pc(\bOmega_\bbG), \, \Gamma^n_{\sharp} \etab = \bnu^n \, \Big\},
	\end{equation}
	and we see that the right-hand side is increasing in $n$ and bounded above by $H(\bnu \| \bmu)$. To see that the limit is equal to $H(\bnu \| \bmu)$, we introduce the piecewise linear interpolation map $R^n \colon \bbG^{2^n} \to \bOmega_\bbG$, which is defined such that $\Gamma^n \circ R^n = \id$, and the image $\mathrm{Im}(R^n) \subset \bOmega_\bbG$ is the set of paths that are linear except at the dyadics $t^n_k$, $k \in \{0, \dotsc, 2^n\}$. Define $\wt \bmu^n = R^n_\sharp \bmu^n$, $\wt \bnu^n = R^n_\sharp \bnu^n \in \Pc(\bOmega_\bbG)$. Since $R^n \colon \bbG^{2^n} \to \mathrm{Im}(R^n)$ is a bijection, applying the representation given in \eqref{eq:rel-ent-pusforward} for both $R^n$ and its inverse gives the equality $H(\bnu^n \| \bmu^n) = H(\wt \bnu^n \| \wt \bmu^n)$. We conclude similarly to \cite[Corollary 9.4.6]{AGS}, as follows. For any $\bomega \in \bOmega_\bbG$, we have that $R^n \circ \Gamma^n (\bomega) \to \bomega$ as $n \to \infty$ and so, by dominated convergence, $\wt \bmu^n \rightharpoonup \bmu$ and $\wt \bnu^n \rightharpoonup \bnu$. Then, using the joint lower semicontinuity of the relative entropy (see, e.g.\ \cite[Lemma 9.4.3]{AGS}) together with the upper bound implied by \eqref{eq:rel-ent-pusforward}, we conclude that
	\begin{align*}
		\lim_{n \to \infty} H(\bnu^n \| \bmu^n) = \lim_{n \to \infty}H(\wt \bnu^n \| \wt \bmu^n) = H(\bnu \| \bmu).
	\end{align*}
    Finally, for any $n \in \N$, $\bmu^n \in \Tc_2(\bbG^{2^n}, d_{\cc, n}, \alpha)$ implies that
    \begin{equation*}
        \T_{C_n, 2}^2(\bmu, \bnu) = \T_{d_{\cc, n}, 2}^2(\bmu^n, \bnu^n) \leq \frac{2}{\alpha} H(\bnu^n \| \bmu^n) \leq \frac{2}{\alpha} H(\bnu \| \bmu).\qedhere
    \end{equation*}
\end{proof}

Before turning to the main result of this section, we prove an auxiliary lemma on the Euclidean cost on $\R^d$ and the associated Cameron--Martin cost $c_\cH$ defined in \cref{eq:cameron-martin}. For $n \in \N$, define $c_n \colon \Omega \times \Omega \to [0, \infty)$ by
\begin{equation}\label{eq:cost-euclidean}
	c_n^2(\omega, \ol \omega) = 2^{n} \sum_{k = 1}^{2^n}|\ol \omega_{t_{k - 1}^n, t_k^n} - \omega_{t_{k - 1}^n, t_k^n}|^2,
\end{equation}
for $\omega, \ol \omega \in \Omega$. Part (ii) of the following lemma is a standard stability result from optimal transport and is a consequence, for example, of \cite[Lemma 1.1]{riedel_transportationcost_2017}. Part (iii) will also be used in \cref{prop:liminf}.

\begin{lemma}\label{lem:euclidean-liminf}~
    \begin{enumerate}[label=(\roman*)]
        \item For each $(\omega, \ol \omega) \in \Omega \times \Omega$, $(c_n(\omega, \ol \omega))_{n \in \N}$ is an increasing sequence, and 
			\begin{equation}\label{eq:mono-conv-euclidean}
				\lim_{n \to \infty}c_n(\omega, \ol \omega) = c_\cH (\omega, \ol \omega).
			\end{equation}		
		\item For any $\nu \in \Pc(\Omega)$, the following convergence holds along a subsequence:
			\begin{equation}\label{eq:stability-euclidean}
				\lim_{n \to \infty}\T_{c_n, 2}(\mu, \nu) = \T_{c_\cH, 2}(\mu, \nu).
			\end{equation}
		\item For any $(\bomega, \ol \bomega) \in \bOmega_\bbG \times \bOmega_\bbG$ and $n \in \N$,
			\begin{equation}\label{eq:euclidean-cc-ineq}
				C_n(\bomega, \ol \bomega) \geq c_n(\pi_1 \bomega, \pi_1 \ol \bomega).
			\end{equation}
    \end{enumerate}
\end{lemma}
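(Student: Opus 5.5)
For part (i), write $\delta = \ol\omega - \omega$. Then $c_n^2(\omega,\ol\omega) = 2^n\sum_k |\delta_{t^n_{k-1},t^n_k}|^2$ is the squared Cameron--Martin norm of the piecewise-linear interpolation $\wh\delta^n$ of $\delta$ on the dyadic grid.

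Monotonicity follows from Cauchy--Schwarz. Each interval of level $n$ splits into two intervals of level $n+1$, and for increments $a,b$ we have $2^n|a+b|^2 \le 2^{n+1}(|a|^2+|b|^2)$.

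For the limit, first take $\delta\in\cH$. Then $|\delta_{s,t}|^2 \le (t-s)\int_s^t|\dot\delta_r|^2\,\D r$ gives $c_n\le \|\delta\|_\cH$. Moreover, $\dot{\wh\delta}{}^n = \E[\dot\delta\mid \mathcal D_n]$ is the conditional expectation with respect to the dyadic $\sigma$-algebra $\mathcal D_n$ on $[0,1]$ with Lebesgue measure. By the martingale convergence theorem in $L^2$, $\dot{\wh\delta}{}^n\to\dot\delta$ in $L^2$, so $c_n\to\|\delta\|_\cH$.

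Conversely, suppose $\sup_n c_n<\infty$. Then $\wh\delta^n$ is bounded in $\cH$ and converges uniformly to $\delta$. Weak compactness in $\cH$ and lower semicontinuity of the norm then give $\delta\in\cH$. Hence, if $\delta\notin\cH$, then $c_n\to\infty = c_\cH$.

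For part (ii), I would use monotone convergence of costs together with lower semicontinuity.

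Since $c_n\le c_\cH$, we have $\T_{c_n,2}(\mu,\nu)\le \T_{c_\cH,2}(\mu,\nu)$, and the sequence $\T_{c_n,2}$ is increasing in $n$.

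For the reverse inequality, take optimal couplings $\lambda_n$. Each $c_n$ is continuous for the uniform topology and the marginals are fixed, so the family $\{\lambda_n\}$ is tight. Extract a subsequence $\lambda_{n_j}\rightharpoonup\lambda\in\Pi(\mu,\nu)$. For fixed $m$ and $n_j\ge m$, monotonicity gives $\int c_m^2\,\D\lambda_{n_j}\le \int c_{n_j}^2\,\D\lambda_{n_j}$. Letting $j\to\infty$ (using that $c_m^2$ is lower semicontinuous and bounded below) yields $\int c_m^2\,\D\lambda\le \liminf_j \T_{c_{n_j},2}^2$. Finally, let $m\to\infty$ and apply monotone convergence with (i) to get $\T_{c_\cH,2}^2\le\int c_\cH^2\,\D\lambda\le\lim_n\T_{c_n,2}^2$.

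This compactness argument is the main technical point. One could alternatively cite \cite[Lemma 1.1]{riedel_transportationcost_2017}, as the paper indicates.

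For part (iii), note that the first-level projection $\pi_1$ is a group homomorphism into $\R^{d}$. It therefore maps increments to increments: $\pi_1(\bomega_{s,t}) = \pi_1\bomega_t - \pi_1\bomega_s$.

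It then suffices to show $d_\cc(\bx,\by)\ge |\pi_1\bx-\pi_1\by|$. For any horizontal curve $\gamma$ from $\bx$ to $\by$, (eq:proj1) gives $|\dot\gamma_t| = |\dot h_t|$ with $h=\pi_1\gamma$, so
\[
\int_0^1|\dot\gamma_t|\,\D t=\int_0^1|\dot h_t|\,\D t\ge |h_1-h_0| = |\pi_1\by-\pi_1\bx|.
\]
Taking the infimum over such curves gives the bound.

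Applying this to each dyadic increment, squaring, summing and multiplying by $2^n$ gives $C_n(\bomega,\ol\bomega)\ge c_n(\pi_1\bomega,\pi_1\ol\bomega)$.
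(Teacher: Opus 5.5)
Your proof is correct and follows the paper's argument in all three parts. In (i) you supply details the paper only asserts: Cauchy--Schwarz for monotonicity, martingale convergence for $\delta\in\cH$, and weak compactness for $\delta\notin\cH$. In (ii), monotonicity of $\T_{c_n,2}$ gives you convergence of the full sequence, not just a subsequence. Your lower-semicontinuity step also correctly uses the inequality $\int c_m^2\di\lambda\le\liminf_j\int c_m^2\di\lambda_{n_j}$, which is all that is needed, where the paper writes an equality. One small correction: tightness of $\{\lambda_n\}$ comes from the fixed marginals alone. Continuity of $c_n$ is what gives existence of optimal couplings and justifies the lower-semicontinuity passage.
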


\begin{remark}
  We will show in Theorem~\ref{thm:t2-carnot-path},
  that the convergence in \eqref{eq:stability-euclidean} and the lower bound in  \eqref{eq:euclidean-cc-ineq} imply the lower estimate
  $\liminf_{n\to\infty}\T_{C_n,2}(\bmu,\bnu)\geq \T_{C_\cH,2}(\bmu,\bnu)$. An even stronger result will be derived in Section~\ref{sec:gamma-conv}, namely, we prove the $\Gamma$-convergence of the cost functions $C_n$ to the  cost $C_\cH$. This establishes that $C_\cH$
    is indeed the natural limiting cost. Moreover, we prove
    that the $\Gamma$-convergence of $C_n$ implies the convergence of the associated transport problems $\T_{C_n,2}(\bmu,\wt\bnu^n)$ along a suitable sequence of probability measures $\wt\bnu^n\in\Pc(\bOmega_\bbG)$.
\end{remark}

\begin{proof}
	Let $\omega, \ol \omega \in \Omega$ and write $h = \ol \omega - \omega$. The sequence $(c_n(\omega, \ol \omega))_{n \in \N}$ is increasing by definition. Suppose that $h \in \cH$. Then
	\begin{equation*}
		\lim_{n \to \infty} c_n^2(\omega, \ol \omega) = \lim_{n \to \infty} 2^n \sum_{k = 1}^{2^n}|h_{t_{k - 1}^n, t_k^n}|^2 = \int_0^1 |\dot h_t|^2 \di t = \|h\|_\cH^2.
	\end{equation*}
	For $h \notin \cH$, the above limit is $+ \infty$. This proves part (i).
	
	For part (ii), note that $\T_{c_{\cH}, 2}(\mu, \nu)$ and $\T_{c_n, 2}(\mu, \nu)$ admit minimisers, for each $n \in \N$, by e.g.\ \cite[Theorem 4.1]{Vi09}, since the cost functions are lower semicontinuous and non-negative. Let $\lambda^\ast \in \Pi(\mu, \nu)$ attain the infimum in $\T_{c_{\cH}, 2}(\mu, \nu)$. Then, by the monotone convergence theorem,
	\begin{equation*}
		\limsup_{n \to \infty}\T_{c_n, 2}^2(\mu, \nu) \leq \limsup_{n \to \infty}\int_{\Omega \times \Omega} c_{n}^2(\omega, \ol \omega) \di \lambda^\ast(\omega, \ol \omega) = \int_{\Omega \times \Omega} c_{\cH}^2(\omega, \ol \omega) \di \lambda^\ast(\omega, \ol \omega) = \T_{c_\cH, 2}^2(\mu, \nu).
	\end{equation*}
	On the other hand, for each $n \in \N$, let $\lambda^n \in \Pi(\mu, \nu)$ attain the infimum in $\T_{c_n, 2}(\mu, \nu)$. Since $\Pi(\mu, \nu)$ is tight, Prohorov's theorem implies that $(\lambda^n)_{n \in \N}$ converges weakly along a subsequence $(n_k)_{k \in \N}$ to some $\wt \lambda \in \Pi(\mu, \nu)$. By monotonicity, for any $m \in \N$, 
	\begin{align*}
		\liminf_{k \to \infty}\T_{c_{n_k}, 2}^2(\mu, \nu) = \liminf_{k \to \infty}\int_{\Omega \times \Omega} c_{n_k}^2(\omega, \ol \omega) \di \lambda^{n_k}(\omega, \ol \omega) & \geq \liminf_{k \to \infty}\int_{\Omega \times \Omega}c_{m}^2(\omega, \ol \omega) \di \lambda^{n_k}(\omega, \ol \omega) \\ 
		& = \int_{\Omega \times \Omega} c_{m}^2(\omega, \ol \omega) \di \wt \lambda(\omega, \ol \omega).
	\end{align*}
	Applying monotone convergence once more,
	\begin{align*}
		\liminf_{k \to \infty}\T_{c_{n_k}, 2}^2(\mu, \nu) \geq \lim_{m \to \infty}\int_{\Omega \times \Omega} c_{m}^2(\omega, \ol \omega) \di \wt \lambda(\omega, \ol \omega) & = \int_{\Omega \times \Omega} c_{\cH}^2(\omega, \ol \omega) \di \wt \lambda(\omega, \ol \omega)\\
		& \geq \T_{c_\cH, 2}^2(\mu, \nu).
	\end{align*}
	
	Now observe that, for $\bx, \ol\bx \in \bbG$, $d_\cc(\bx, \ol \bx) \geq |\pi_1 \bx - \pi_1 \ol\bx|$. Indeed, by definition of the Carnot--Carath\'eodory metric,
    \begin{align*}
        d_\cc(\bx, \ol\bx)& = \min\Big\{\, \int_0^1|\dot \gamma_t| \di t : \gamma \colon [0, 1] \to \bbG \; \text{horizontal,} \; \gamma_0 = \bx, \gamma_1 = \ol\bx \, \Big \}\\
        & \geq  \inf\Big\{\, \int_0^1|\dot \gamma_t| \di t : \gamma \colon [0, 1] \to \bbG \; \text{horizontal,} \; \pi_1\gamma_0 = \pi_1\bx, \pi_1\gamma_1 = \pi_1\ol\bx \, \Big \}\\
        & = \min \Big\{\, \int_0^1 |\dot g_t| \di t : g \colon [0, 1] \to \R^d \; \text{absolutely continuous,} \; \; g_0 = \pi_1\bx, g_1 = \pi_1\ol\bx \, \Big\}\\
        & = |\pi_1 \bx - \pi_1 \ol\bx|.
    \end{align*}
	Hence, for $\bomega, \ol \bomega \in \bOmega_\bbG$,
	\begin{align*}
		C_n^2(\bomega, \ol \bomega) & = 2^n \sum_{k = 1}^{2^n} d_\cc^2(\bomega_{t_{k - 1}^n, t_k^n}, \ol \bomega_{t_{k - 1}^n, t_k^n}) \geq 2^n \sum_{k = 1}^{2^n} |\pi_1 \ol \bomega_{t_{k - 1}^n, t_k^n} - \pi_1 \bomega_{t_{k - 1}^n, t_k^n}|^2 = c_n^2(\pi_1 \bomega, \pi_1 \ol \bomega). 
	\end{align*}
	This concludes part (iii).
\end{proof}
We are now in position to prove the main result of this section.
\begin{theorem}\label{thm:t2-carnot-path}
	Let $\bbG$ be a step-$2$ Carnot group and suppose that $\bmu_1 \in \Tc_2(\bbG, d_\cc, \alpha)$, for some $\alpha \in (0, \infty)$. Then $\bmu \in \Tc_2(\bOmega_\bbG, C_\cH, \alpha)$.
\end{theorem}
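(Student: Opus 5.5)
Fix $\bnu\in\Pc(\bOmega_\bbG)$. We may assume $H(\bnu\|\bmu)<\infty$, so $\bnu\ll\bmu$. By \cref{lem:rel-entropy-bound}, for every $n$,
\[
\T_{C_n,2}^2(\bmu,\bnu)\le \tfrac{2}{\alpha}H(\bnu\|\bmu).
\]
It therefore suffices to prove the lower semicontinuity estimate $\T_{C_\cH,2}(\bmu,\bnu)\le\liminf_n \T_{C_n,2}(\bmu,\bnu)$.

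To obtain it, I would first reduce to the first level. By \cref{lem:abs-cont-lift}, write $\bnu=\Psi_\sharp\nu$ with $\nu\ll\mu$, and recall $\bmu=\Psi_\sharp\mu$. By \cref{lem:euclidean-liminf}(iii), every $\blambda\in\Pi(\bmu,\bnu)$ satisfies
\[
\int C_n^2\,\D\blambda\ \ge\ \int c_n^2(\pi_1\cdot,\pi_1\cdot)\,\D\blambda\ \ge\ \T_{c_n,2}^2(\mu,\nu),
\]
because $(\pi_1\times\pi_1)_\sharp\blambda\in\Pi(\mu,\nu)$; here $\pi_1\circ\Psi=\id$ on $\dom(\Psi)$, which has full $\mu$- and $\nu$-measure. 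Hence $\T_{C_n,2}(\bmu,\bnu)\ge\T_{c_n,2}(\mu,\nu)$. By \cref{lem:euclidean-liminf}(ii), along a subsequence $\T_{c_n,2}(\mu,\nu)\to\T_{c_\cH,2}(\mu,\nu)$. Combining,
\[
\T_{c_\cH,2}^2(\mu,\nu)\le \tfrac{2}{\alpha}H(\bnu\|\bmu).
\]

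The remaining step is to go back up to the group: I need $\T_{C_\cH,2}(\bmu,\bnu)\le\T_{c_\cH,2}(\mu,\nu)$. Take an optimal $\lambda\in\Pi(\mu,\nu)$ for the cost $c_\cH$; it exists since $c_\cH$ is l.s.c., and it has finite cost, so $\ol\omega-\omega\in\cH$ holds $\lambda$-a.s. Set $\blambda=(\Psi\times\Psi)_\sharp\lambda\in\Pi(\bmu,\bnu)$. Since $\mu$ and $\nu$ are both concentrated on $\dom(\Psi)$, \cref{prop:shift}(iii) gives $\Psi(\ol\omega)=\Psi(\omega+(\ol\omega-\omega))=T_{\ol\omega-\omega}\Psi(\omega)$ for $\lambda$-a.e. pair. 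Hence $C_\cH(\Psi(\omega),\Psi(\ol\omega))\le\|\ol\omega-\omega\|_\cH=c_\cH(\omega,\ol\omega)$ $\lambda$-a.s. Integrating this bound gives the claim, and therefore $\bmu\in\Tc_2(\bOmega_\bbG,C_\cH,\alpha)$. This is the same contraction argument as in \cref{thm:top-t2-riedel-approach}, applied with $L=1$.

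The main subtlety is the measure-theoretic bookkeeping: making sure that $\nu(\dom\Psi)=1$, which follows from $\nu\ll\mu$ and \cref{prop:BM-lift}, and that $\pi_1\Psi=\id$ there. This is what justifies both the projection step and the lift step. A secondary point is that $\T_{C_n,2}$ may not converge, which is why I only take a $\liminf$ along the subsequence from \cref{lem:euclidean-liminf}(ii).
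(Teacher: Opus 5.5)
Your proposal is correct and follows essentially the paper's proof. You lift an optimal $c_\cH$-plan by $\Psi\times\Psi$ and use the lift--shift identity \eqref{eq:lift-shift} to get $\T_{C_\cH,2}(\bmu,\bnu)\le\T_{c_\cH,2}(\mu,\nu)$, and you bound $\T_{c_\cH,2}(\mu,\nu)$ by $\T_{C_n,2}(\bmu,\bnu)$ via \cref{lem:euclidean-liminf}(ii)--(iii) and \cref{lem:rel-entropy-bound}, exactly as the paper does. The only difference is that you do the projection step first, which has the minor advantage of showing $\T_{c_\cH,2}(\mu,\nu)<\infty$ before you pick the optimal plan and assert $\ol\omega-\omega\in\cH$ almost surely.
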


\begin{proof}
     If $\bnu$ is not absolutely continuous with respect to $\bmu$, then $H(\bnu \| \bmu) = + \infty$ and the cost-information inequality holds trivially.
     
    Now suppose that $\bnu \ll \bmu$. Recall the lift map $
    \Psi \colon C([0, 1], \R^d) \to C([0, 1], \bbG)$ given by \cref{eq:lift}. By \cref{prop:BM-lift}, $\bmu = \Psi_\sharp \mu$, and by \Cref{lem:abs-cont-lift}, there exists $\nu \ll \mu$ such that $\bnu = \Psi_{\sharp} \nu$. Now let $\lambda^\ast \in \Pi(\mu, \nu)$ be such that
    \begin{equation*}
    	\T_{c_\cH, 2}^2(\mu, \nu) = \int_{\Omega \times \Omega}c_\cH^2 (\omega, \ol \omega) \di \lambda^\ast(\omega, \ol \omega) = \int_{\Omega \times \Omega} \|\ol \omega - \omega\|_\cH^2 \di \lambda^\ast(\omega, \ol \omega).
    \end{equation*}
    We have that $\lambda^\ast(\{\, (\omega, \ol \omega) \in \Omega \times \Omega : \ol \omega - \omega \in \cH \,\}) = 1$ and that
    $\wt \blambda = (\Psi \times \Psi)_{\sharp} \lambda^\ast \in \Pi(\bmu, \bnu)$ is an admissible coupling.
    Using the property \cref{eq:lift-shift} of the lift and shift from \cref{prop:shift}, we find that
    \begin{align*}
    	\int_{\bOmega_\bbG \times \bOmega_\bbG}C_\cH^2(\bomega, \ol \bomega) \di \wt \blambda(\bomega, \ol \bomega) & = \int_{\Omega \times \Omega} C_\cH^2 (\Psi (\omega), \Psi (\ol \omega))\di \lambda^\ast(\omega, \ol \omega)\\
    	& = \int_{\Omega \times \Omega} C_\cH^2 (\Psi (\omega), T_{\ol \omega - \omega}\Psi(\omega))\di \lambda^\ast(\omega, \ol \omega)\\
    	& = \int_{\Omega \times \Omega} \|\ol \omega - \omega\|_\cH^2 \di \lambda^\ast(\omega, \ol \omega) = \T_{c_\cH, 2}^2(\mu, \nu).
    \end{align*}
    Hence $\T_{C_\cH, 2}^2(\bmu, \bnu) \leq \T_{c_\cH, 2}^2(\mu, \nu)$.
    
    Combining \cref{eq:stability-euclidean} and \cref{eq:euclidean-cc-ineq} from \cref{lem:euclidean-liminf}, we have
    \begin{align*}
    	\limsup_{n \to \infty} \T_{C_n, 2}^2(\bmu, \bnu) & \geq \limsup_{n \to \infty} \T_{c_n, 2}^2(\mu, \nu) \geq \T_{c_\cH, 2}^2(\mu, \nu) \geq \T_{C_\cH, 2}^2(\bmu, \bnu).
    \end{align*}
    By \cref{lem:rel-entropy-bound}, we conclude that $
    	\T_{C_\cH, 2}^2(\bmu, \bnu) \leq \frac{2}{\alpha} H(\bnu \| \bmu)$.
\end{proof}

\begin{theorem}\label{thm:t2-rp-space}
    Let $\bbG$ be an H-type group. Then the measure $\bmu$ on the space $\bOmega_\bbG = C_0([0,1], \bbG)$ satisfies the cost-information inequality
    \begin{equation*}
        \bmu \in \Tc_2(\bOmega_\bbG, C_\cH, \alpha),
    \end{equation*}
    for $\alpha > 0$ as in \cref{thm:t2-h-type}.
\end{theorem}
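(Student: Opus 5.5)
This is a direct combination of two results already established. The plan is to chain \cref{thm:t2-h-type} with the bottom-up transfer \cref{thm:t2-carnot-path}.

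First, since $\bbG$ is an H-type group, \cref{thm:t2-h-type} applies. It is based on the heat semigroup estimate \eqref{eq:heat-semigroup-estimate} from \cite[Theorem 2.4]{El10}, fed through \cref{thm:heat-estimate-t2} and \cref{thm:gigli-ledoux}. It yields a constant $\alpha = \tfrac{1}{2K^2} > 0$ such that the heat kernel measure satisfies $\bmu_1 \in \Tc_2(\bbG, d_\cc, \alpha)$. Note that the statement of \cref{thm:t2-h-type} writes $\bbH$ in place of $\bbG$, but its proof works verbatim for a general H-type group $\bbG$. I would read it that way, so that the hypothesis of the next step is met on the correct space.

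Second, every H-type group is in particular a step-$2$ Carnot group. Hence \cref{thm:t2-carnot-path} applies with this same $\alpha$ and gives $\bmu \in \Tc_2(\bOmega_\bbG, C_\cH, \alpha)$, which is the claim. The substantive ingredients are already inside the proof of \cref{thm:t2-carnot-path}: the scaling \cref{lem:t2-dilation}, the tensorisation \cref{prop:t2-tensorisation}, the entropy bound \cref{lem:rel-entropy-bound} for the discretised costs $C_n$, and the comparison with the Euclidean costs $c_n$ via \cref{lem:euclidean-liminf}. None of these need to be re-derived.

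There is no genuine obstacle. The only point to check is that the constant $\alpha$ produced by \cref{thm:t2-h-type} is passed unchanged through \cref{thm:t2-carnot-path}, which is the case since that theorem preserves the constant. One could also note that \cref{thm:t2-top-foellmer} already gives the stronger constant $1$. The present statement is meant as the output of the bottom-up route, so I would not invoke it here.
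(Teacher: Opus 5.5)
Your proposal is correct and matches the paper's proof: it obtains $\bmu_1 \in \Tc_2(\bbG, d_\cc, \alpha)$ from \cref{thm:t2-h-type} and then applies the bottom-up transfer \cref{thm:t2-carnot-path}, which preserves the constant $\alpha$. Your remark that the $\bbH$ in the statement of \cref{thm:t2-h-type} should read $\bbG$ is also apt.
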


\begin{proof}
	By \cref{thm:t2-h-type}, $\bmu_1 \in \Tc_2(\bbG, d_\cc, \alpha)$. We conclude by \cref{thm:t2-carnot-path}.
\end{proof}

\subsection{Failure of top-down projection and blow-up of cost functions}\label{sec:projection-blow-up}

In this section, we point out two major differences between the classical Euclidean and the Carnot group settings. In contrast to the Euclidean case, we cannot project the Talagrand inequality from \cref{thm:t2-carnot-path} down to a Talagrand inequality for $\boldsymbol{B}_{1}$. Moreover, the cost functions $C_n$ do not converge pointwise to the cost $C_\cH$.

We start by giving the corresponding projection result in the Euclidean setting, which we prove via the contraction principle from \cref{lem:contraction}.
Let $\wt{P}_{1} \colon \Omega \to\R^{d_1},\, \omega \mapsto \omega_1$ denote the map that evaluates a path at time $t = 1$, and recall the Euclidean Cameron--Martin cost $c_\cH$ defined in \eqref{eq:cameron-martin}.

\begin{proposition}
	Let $\eta$ be a Borel probability measure on $\Omega$ and suppose that $\eta \in \Tc_2(\Omega, c_\cH, \alpha)$, for some $\alpha \in (0, \infty)$. Then $(\wt P_1)_\sharp\eta \in \Tc_2(\R^{d_1}, |. - .|, \alpha)$, where $|\cdot|$ denotes the Euclidean norm on $\R^{d_1}$.
\end{proposition}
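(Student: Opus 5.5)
We apply the contraction principle of \cref{lem:contraction} with $E = \Omega$ (Polish under the uniform topology), $S = \R^{d_1}$ with the Euclidean metric, $c = c_\cH$, $\wt c(x,y) = |x-y|$, and $\psi = \wt P_1$. The map $\wt P_1$ is continuous, hence Borel measurable. The whole proof reduces to the pointwise estimate
\[
|\wt P_1(\omega) - \wt P_1(\ol\omega)| \le c_\cH(\omega, \ol\omega), \qquad \omega, \ol\omega \in \Omega,
\]
which gives $L \equiv 1$ and $E_0 = \Omega$.

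To verify the estimate, first let $h = \ol\omega - \omega \in \cH$. Since $h_0 = 0$ and $h$ is absolutely continuous, Cauchy--Schwarz on $[0,1]$ gives
\[
|\ol\omega_1 - \omega_1| = |h_1| = \Big|\int_0^1 \dot h_t \di t\Big| \le \Big(\int_0^1 |\dot h_t|^2 \di t\Big)^{1/2} = \|h\|_\cH = c_\cH(\omega, \ol\omega).
\]
If instead $\ol\omega - \omega \notin \cH$, then $c_\cH(\omega,\ol\omega) = +\infty$ and the inequality holds trivially.

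With $p = 2$ we have $q = \infty$. Since $L \equiv 1$, $\|L\|_{L^\infty(\eta)} = 1$. \Cref{lem:contraction} then yields $(\wt P_1)_\sharp \eta \in \Tc_2(\R^{d_1}, |\cdot - \cdot|, \alpha)$.

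No real obstacle arises. The only points to note are that $c_\cH$ is Borel measurable, which follows from lower semicontinuity as in the proof of \cref{lem:euclidean-liminf}, and that the bound uses the normalisation $h_0 = 0$ built into $\cH$. An alternative to invoking \cref{lem:contraction} is to push forward an optimal coupling by $\wt P_1 \times \wt P_1$ and integrate the pointwise bound directly.
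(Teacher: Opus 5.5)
Your proof is correct and takes the same approach as the paper: prove the pointwise bound $|\omega_1-\ol\omega_1|\le c_\cH(\omega,\ol\omega)$, then apply \cref{lem:contraction} with $L\equiv 1$. The only difference is that the paper obtains the bound through the slightly stronger estimate $\|\omega-\ol\omega\|_\infty\le\|\omega-\ol\omega\|_\cH$ via Jensen, whereas you apply Cauchy--Schwarz directly at time $1$.
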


\begin{proof}
    For any $\omega, \ol\omega \in \Omega$ such that $\omega - \ol\omega \in \cH$, Jensen's inequality implies that
    \[\|\omega - \ol \omega\|_{\infty}^2= \sup_{t\in[0,1]}\Big|\int_{0}^{t}(\dot{\omega}_s-\dot{\ol\omega}_s) \di s\Big|^2\leq \sup_{t\in[0,1]} t \int_{0}^{t}|\dot{\omega}_s-\dot{\ol\omega}_s|^2 \di s\leq \int_{0}^{1}|\dot{\omega}_s-\dot{\ol\omega}_s|^2 \di s=\|\omega-\ol\omega\|_{\cH}^2
    \]
    Thus, for any $\omega, \ol \omega \in \Omega$,
    \begin{align*}
        |\wt{P}_1\omega - \wt{P}_1\ol \omega|^2\leq \|\omega - \ol\omega\|_{\infty}^2\leq c_{\cH}^2(\omega,\ol\omega).
    \end{align*}
    Hence the contraction principle from \cref{lem:contraction} yields the claim.
\end{proof}

Now consider the law $\bmu$ of the Brownian motion $\bB$ on $\bbG$. Let $P_1 \colon \bOmega_\bbG \to \bbG$ denote the projection of a $\bbG$-valued path onto its final time evaluation; i.e.\ $P_1 \bomega = \bomega_1$ for any $\bomega \in \bbG$. Suppose that there exists $\wt\bOmega_\bbG \subseteq \bOmega_\bbG$ with $\bmu(\wt\bOmega_\bbG) = 1$ and some measurable function $L \colon \bOmega_\bbG \to [0, \infty]$ such that
\begin{align}\label{eq:proj-contraction}
	d_\cc(P_1 \bomega, P_1 \wt\bomega) \leq L(\bomega) C_\cH(\bomega, \wt\bomega), 	
\end{align}
for all $\bomega, \wt\bomega \in \wt\bOmega_\bbG$.
If $L \in L^\infty(\bmu)$, then, by \Cref{lem:contraction}, the $\Tc_2$ inequality for $\bmu$ implies a $\Tc_2$ inequality for $\bmu_1$. If we only have $L \in L^q(\bmu)$ for some $q \in [2, \infty)$, then \Cref{lem:contraction} still implies a $\Tc_p$ inequality for $p = \tfrac{2q}{2 + q} \in [1, 2)$. The following result shows that any such $L$ cannot belong to $L^q$ for any $q \in [2, \infty]$, and thus the contraction principle from \Cref{lem:contraction} is not applicable.

\begin{proposition}\label{prop:projection-fails}
	Let $L \colon \bOmega_\bbG \to [0, \infty]$ be as in \eqref{eq:proj-contraction}. Then $\bmu(L = \infty) > 0$. In particular, $L \notin L^q(\bmu)$ for any $q \in (0, \infty]$.
\end{proposition}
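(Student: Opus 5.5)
The plan is to show that, for $\bmu$-almost every $\bomega$, one can find shifted paths $T_h\bomega\in\wt\bOmega_\bbG$ with $C_\cH(\bomega,T_h\bomega)=\|h\|_\cH$ arbitrarily small, while $d_\cc(\bomega_1,(T_h\bomega)_1)$ is only of order $\|h\|_\cH^{1/2}$. Dividing then forces $L(\bomega)=\infty$ in \eqref{eq:proj-contraction}, which in fact gives $\bmu(L=\infty)=1$. The mechanism is the non-commutativity error of \Cref{rem:commutativity-error}: a Cameron--Martin shift with $h_1=0$ leaves the first level of the endpoint unchanged, but changes the second level \emph{linearly} in $h$.

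\textbf{Step 1 (choice of direction).} Since $\bbG$ is step-$2$ with $\mathcal V_2=[\mathcal V_1,\mathcal V_1]\neq\{0\}$, fix indices $i\neq j$ with $\sco_{ij}\neq 0$. Fix a deterministic $g\in\cH$ with $g_1=0$, for instance $g_t=\sin(2\pi t)e_j$. Set $h=\varepsilon g$.

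\textbf{Step 2 (endpoint computation).} Let $\bomega=\Psi(\omega)$. By \Cref{prop:shift}(ii), $(T_h\bomega)_1^{(1)}=\bomega_1^{(1)}$ because $h_1=0$. For the second level, integrating by parts (using $h_0=h_1=0$) gives $\int_0^1 h\otimes\D\omega=-\bigl(\int_0^1\omega\otimes\D h\bigr)^{T}$. Feeding this into the formula of \Cref{prop:shift}(ii), and using the group law together with $\omega_1\otimes h_1=0$, I expect
\[
\bomega_1^{-1}(T_h\bomega)_1=\Bigl(0,\;\varepsilon\,\SCO\!\int_0^1\omega_t\otimes\D g_t+\tfrac{\varepsilon^2}{2}\SCO\!\int_0^1 g_t\otimes\D g_t\Bigr).
\]
By left-invariance and the gauge equivalence \eqref{eq:gauge-distance},
\[
d_\cc(\bomega_1,(T_h\bomega)_1)\ \ge\ \kappa^{-1}\bigl|\varepsilon\,Z(\omega)+\varepsilon^2 c_g\bigr|^{1/2},\qquad Z(\omega)\coloneqq\SCO\!\int_0^1\omega\otimes\D g .
\]
Here $c_g\coloneqq\frac12\SCO\int_0^1 g\otimes\D g$ is deterministic. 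Under $\mu$, $Z$ is a centred Gaussian vector in $\R^{d_2}$. It has a nonzero component, given by $\sco_{ij}\int_0^1 \omega^i\,\D g^j$ plus a contribution independent of $\omega^i$, and this component has positive variance. Hence $Z\neq0$ $\mu$-almost surely.

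\textbf{Step 3 (full-measure issue).} This is the main obstacle: \eqref{eq:proj-contraction} is only assumed on $\wt\bOmega_\bbG$, so we need $T_{\varepsilon g}\bomega\in\wt\bOmega_\bbG$. Take a countable sequence $\varepsilon_k\downarrow0$. For each $k$, \eqref{eq:lift-shift} and the Cameron--Martin theorem give $\Law(T_{\varepsilon_k g}\bB)=\Law(\Psi(B+\varepsilon_k g))\sim\bmu$. Hence $\bmu(\{\bomega:T_{\varepsilon_k g}\bomega\in\wt\bOmega_\bbG\})=1$. Intersecting over $k$, over $\wt\bOmega_\bbG$, over $\Psi(\dom\Psi)$ and over $\{Z\neq0\}$ yields a set $A$ of full $\bmu$-measure.

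\textbf{Step 4 (conclusion).} For $\bomega\in A$ and each $k$,
\[
L(\bomega)\ \ge\ \frac{\kappa^{-1}|\varepsilon_k Z+\varepsilon_k^2c_g|^{1/2}}{\varepsilon_k\|g\|_\cH}\ \sim\ \frac{|Z(\omega)|^{1/2}}{\kappa\|g\|_\cH}\,\varepsilon_k^{-1/2}\longrightarrow\infty .
\]
Thus $L=\infty$ on $A$, so $\bmu(L=\infty)=1>0$, and consequently $L\notin L^q(\bmu)$ for every $q$.
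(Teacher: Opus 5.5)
Your proof is correct and is essentially the paper's argument. A small deterministic Cameron--Martin shift of size $\delta$ creates a vertical non-commutativity error that is linear in $\delta$. The gauge-distance bound then gives $d_\cc(\bomega_1,(T_{\delta g}\bomega)_1)$ of order $\delta^{1/2}$, with an almost surely nonzero Gaussian factor, while $C_\cH(\bomega,T_{\delta g}\bomega)=\delta\|g\|_\cH$, which forces $L=\infty$ as $\delta\to0$.

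Two points of yours make explicit what the paper's proof leaves implicit. You use Cameron--Martin quasi-invariance along a countable sequence $\varepsilon_k\downarrow0$ to ensure $T_{\varepsilon_k g}\bomega\in\wt\bOmega_\bbG$, and you choose indices $i\neq j$ with $\sco_{ij}\neq0$ rather than relying on the first basis direction. Together these give the slightly stronger conclusion $\bmu(L=\infty)=1$.
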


We make use of the following example in the proof of \cref{prop:projection-fails} and again below in the proof of \cref{prop:blow-up}.

\begin{lemma}\label{lem:shift-example}
	Let $a > 0$ and define $h \in \cH$ by $h_t = (a t, 0, \dotsc, 0) \in \R^{d_1}$, for all $t \in [0, 1]$. Then, for any $s, t \in [0, 1]$ with $s \leq t$, there exists a standard normal random variable $Z_{s, t}$ such that
	\begin{align*}
		d_\cc^2(\bB_{s, t}, (T_h \bB)_{s, t}) \geq a C (t - s)^{\frac{3}{2}} |Z_{s, t}|,
	\end{align*}
	for some constant $C > 0$ independent of $a$, $s$ and $t$. Moreover, for $u, v, s, t \in [0, 1]$ with $u \leq v \leq s \leq t$, the random variables $Z_{u,v}$ and $Z_{s, t}$ are independent.
\end{lemma}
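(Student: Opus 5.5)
We compute the increment discrepancy $\bB_{s,t}^{-1}(T_h\bB)_{s,t}$ explicitly from \Cref{rem:commutativity-error}, and then bound $d_\cc$ from below by the gauge distance via \eqref{eq:gauge-distance}.

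\textbf{Step 1: the increment discrepancy.} Let $\bB=\Psi(B)$ and $h_t=(at,0,\dots,0)$. By \Cref{rem:commutativity-error},
\[
\bB_{s,t}^{-1}(T_h\bB)_{s,t}=\bh_{s,t}\,\btheta_{s,t},
\qquad
\btheta_{s,t}=\Bigl(0,\int_s^t \SCO\, h_{s,r}\otimes \D B_r\Bigr).
\]
Since $h$ is a straight line, its lift is $\bh_{s,t}=(h_{s,t},0)$, because $\SCO(h\otimes\dot h)=0$. The element $\btheta_{s,t}$ is purely vertical, hence central, so the group product has no cross term. Therefore
\[
\bB_{s,t}^{-1}(T_h\bB)_{s,t}=\bigl(a(t-s)e_1,\;\Theta_{s,t}\bigr),
\qquad
\Theta_{s,t}=a\sum_{j}\sco_{1j}\int_s^t (r-s)\,\D B^j_r .
\]

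\textbf{Step 2: lower bound by the vertical part.} By left-invariance and \eqref{eq:gauge-distance},
\[
d_\cc\bigl(\bB_{s,t},(T_h\bB)_{s,t}\bigr)\ \ge\ \kappa^{-1}\,|\Theta_{s,t}|^{1/2}.
\]
Squaring gives $d_\cc^2\ge \kappa^{-2}|\Theta_{s,t}|$. It therefore suffices to bound $|\Theta_{s,t}|$ from below by a constant times $a(t-s)^{3/2}|Z_{s,t}|$.

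\textbf{Step 3: Gaussian structure.} The vector $\Theta_{s,t}\in\R^{d_2}$ is a centred Gaussian Wiener integral. For each component $k$ its variance is
\[
a^2\,\frac{(t-s)^3}{3}\sum_j(\sco^k_{1j})^2 .
\]
Because $\mathcal V_2=[\mathcal V_1,\mathcal V_1]$, we may order the basis so that $e_1$ brackets non-trivially, i.e.\ some $\sco^k_{1j}\neq0$. For instance, we can choose the basis of $\mathcal V_1$ such that $V_1$ has a non-zero bracket with some $V_j$; this is a standing assumption on the adapted basis, and we would remark it is harmless.

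Fix such a $k$ and set
\[
Z_{s,t}=\frac{\Theta^k_{s,t}}{a\,\sigma_k\,(t-s)^{3/2}/\sqrt3},
\qquad
\sigma_k^2=\sum_j(\sco^k_{1j})^2 .
\]
Then $Z_{s,t}$ is standard normal and $|\Theta_{s,t}|\ge|\Theta^k_{s,t}|$. This gives the claim with $C=\sigma_k/(\sqrt3\,\kappa^2)$, which is independent of $a$, $s$ and $t$.

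\textbf{Step 4: independence.} The variable $Z_{s,t}$ is measurable with respect to the Brownian increments on $[s,t]$. For $u\le v\le s\le t$ these are independent of the increments on $[u,v]$, so $Z_{u,v}$ and $Z_{s,t}$ are independent.

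\textbf{Main obstacle.} The only delicate point is the non-degeneracy $\sco_{1j}\neq0$ for some $j$. If this fails for the first coordinate direction, I would replace $e_1$ in $h$ by a horizontal direction that brackets non-trivially, or note that such a direction can be assumed. The rest is the explicit computation in Step 1.
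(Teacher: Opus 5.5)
Your proposal is correct and is essentially the paper's proof. Both compute the explicit increment $\bB_{s,t}^{-1}(T_h\bB)_{s,t}=(h_{s,t},\Theta_{s,t})$ from \Cref{rem:commutativity-error}, apply the gauge-distance lower bound \eqref{eq:gauge-distance}, normalise one vertical component $\Theta^k_{s,t}$ via It\^o's isometry, and use independence of Brownian increments. The non-degeneracy you flag is genuinely needed: if $V_1$ is central in $\mathcal V_1$ (e.g.\ the extra direction of $\bbH\times\R$), then $d_\cc^2=a^2(t-s)^2$ and the claim fails. The paper assumes it tacitly when it picks $k$ with $\sum_j|\sco^k_{1j}|^2>0$, so your remark is a fair amendment of the hypotheses rather than a gap in your argument.
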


\begin{proof}
	For $\bB = (\bB^{(1)}, \bB^{(2)})$, write $\bB^{(1)} = (B^1, \dotsc, B^{d_1})$. Let $s, t \in [0, 1]$ with $s \leq t$. Since $h$ is only non-zero in its first component, \Cref{rem:commutativity-error} implies that
	\begin{align*}
		\bB_{s, t}^{-1}(T_h \bB)_{s,t} & = \bigg(h_{s,t}, a \sum_{j = 2}^{d_1}\sco_{1, j}\int_s^t(s - r)\di B^j_r\bigg).
	\end{align*}
	Choose $k \in \{1, \dotsc, m\}$ such that $s_k^2 \coloneqq \sum_{j = 2}^{d_1} |\sco_{1,j}^k|^2 > 0$. By the left-invariance of $d_\cc$ and the estimate \eqref{eq:gauge-distance}, there exists a constant $\kappa > 0$ such that
	\begin{align*}
		d_\cc^2(\bB_{s, t}, (T_h \bB)_{s, t}) & \geq a \kappa^{-1} \bigg| \sum_{j = 2}^{d_1} \sco_{1, j} \int_s^t (s - r) \di B^j_r \bigg|
		\geq a \kappa^{-1} \bigg| \sum_{j = 2}^{d_1} \sco_{1, j}^k \int_s^t (s - r) \di B^j_r \bigg|.
	\end{align*}
	By It\^o's isometry, we can define a standard normal random variable
	\begin{align*}
		Z_{s, t} \coloneqq s_k^{-1} (t - s)^{-\frac{3}{2}} \sqrt 3 \sum_{j = 2}^{d_1} \sco_{1, j}^k \int_s^t (s - r) \di B^j_r.
	\end{align*}
	Thus, setting $C = 3^{-\frac{1}{2}} \kappa^{-1} s_k $, we have
	\begin{align*}
		d_\cc^2(\bB_{s, t}, (T_h \bB)_{s, t}) \geq a C (t - s)^{\frac{3}{2}} |Z_{s, t}|.
	\end{align*}
	The independence property follows from the independence of Brownian increments.
\end{proof}

\begin{proof}[Proof of \Cref{prop:projection-fails}]
	Suppose for contradiction that $\bmu(L < \infty) = 1$. Let $\bB$ be a Brownian motion on $\bbG$, let $\delta > 0$, and define $h \in \cH$ by $h_t = (\delta t, 0 \dotsc, 0) \in \R^{d_1}$, for all $t \in [0, 1]$. By \Cref{lem:shift-example}, there exists a constant $C > 0$ independent of $\delta$ and a standard normal random variable $Z$ such that we have the lower bound
	\begin{align*}
		d_\cc^2(P_1 \bB, P_1 (T_h \bB)) \geq \delta C |Z|.
	\end{align*}
	On the other hand, by definition of the cost $C_\cH$,
	\begin{align*}
		C_\cH^2(\bB, T_h \bB) = \|h\|_\cH^2 = \delta^2.
	\end{align*}
	Therefore, \eqref{eq:proj-contraction} implies that
	\begin{align*}
		C \delta |Z| \leq L(\bB) \delta^2.
	\end{align*}
	Since both $|Z|$ and $L(\bB)$ are almost surely finite, taking the limit as $\delta \to 0$ gives a contradiction.
\end{proof}

We now show that, contrary to the Euclidean case, the cost functions $C_n$ defined in \eqref{eq:defCostDisc} may not converge pointwise to $C_\cH$. Again, we use the example from \Cref{lem:shift-example}.

\begin{proposition}\label{prop:blow-up}
	Let $\bB$ be a Brownian motion on $\bbG$, and define $h \in \cH$ by $h_t = (t, 0, \dotsc, 0) \in \R^{d_1}$, for all $t \in [0, 1]$. Then
	\begin{equation*}
		\lim_{n \to \infty} C_n(\bB, T_h \bB) = \infty
	\end{equation*}
	almost surely.
\end{proposition}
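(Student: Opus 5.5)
The plan is to bound $C_n^2(\bB, T_h\bB)$ from below by a sum of independent random variables and then apply a law of large numbers. By definition,
\[
C_n^2(\bB, T_h \bB) = 2^n \sum_{k=1}^{2^n} d_\cc^2\bigl(\bB_{t_{k-1}^n, t_k^n}, (T_h\bB)_{t_{k-1}^n, t_k^n}\bigr).
\]
Applying \Cref{lem:shift-example} with $a = 1$ to each dyadic interval $[t_{k-1}^n, t_k^n]$ gives standard normal variables $Z^n_k \coloneqq Z_{t_{k-1}^n, t_k^n}$ such that
\[
C_n^2(\bB, T_h\bB) \geq 2^n \sum_{k=1}^{2^n} C\, 2^{-3n/2} |Z^n_k| = C\, 2^{n/2} \cdot \frac{1}{2^n}\sum_{k=1}^{2^n} |Z^n_k|.
\]
The intervals are non-overlapping and ordered, so the independence statement in \Cref{lem:shift-example} shows that $Z^n_1, \dotsc, Z^n_{2^n}$ are i.i.d.\ standard normal for each fixed $n$. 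It is therefore enough to show that the empirical mean $A_n \coloneqq 2^{-n}\sum_k |Z^n_k|$ stays bounded away from zero almost surely for all large $n$. Then $C_n^2 \geq C 2^{n/2} A_n \to \infty$.

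The main obstacle is that the $Z^n_k$ form a triangular array, because they change with $n$, so the strong law of large numbers cannot be quoted directly. I would handle this with Borel--Cantelli and a concentration bound. Write $m = \E|Z| = \sqrt{2/\pi}$ and note that $|Z| - m$ is sub-Gaussian. Hoeffding's inequality for sub-Gaussian variables (or Chebyshev with a fourth moment) gives
\[
\P(A_n < m/2) \leq \exp(-c\,2^n)
\]
for some constant $c>0$. Even the Chebyshev bound $\P(A_n < m/2) \leq 4\mathrm{Var}(|Z|)\,2^{-n}/m^2$ is summable in $n$. By Borel--Cantelli, almost surely $A_n \geq m/2$ for all sufficiently large $n$. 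This yields $C_n^2(\bB,T_h\bB) \geq \tfrac{Cm}{2} 2^{n/2} \to \infty$ almost surely, which gives the claim.

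One technical point remains: \Cref{lem:shift-example} is stated for each fixed pair $(s,t)$, so the inequality holds almost surely for each interval. Taking a countable intersection over all dyadic intervals, the lower bound holds simultaneously for every $n$ and $k$ on a single full-measure event, and this is the event on which the Borel--Cantelli argument is run.
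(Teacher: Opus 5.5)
Your proposal is correct and follows the paper's proof essentially step for step: the lower bound $C_n^2(\bB, T_h\bB) \geq C\,2^{-n/2}\sum_{k=1}^{2^n}|Z_{n,k}|$ comes from \Cref{lem:shift-example}, and Chebyshev's inequality with the first Borel--Cantelli lemma then handles the triangular array of half-normal variables. The only cosmetic difference is that the paper proves almost sure convergence of the empirical mean to $\sqrt{2/\pi}$, whereas you prove only the one-sided bound $A_n \geq m/2$ for all large $n$, which is all the conclusion needs.
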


\begin{proof}
	Fix $n \in \N$.
	By \Cref{lem:shift-example}, there exist independent standard normal random variables $Z_{n, k}$, $k \in \{1, \dotsc, 2^n\}$, such that
	\begin{align*}
		C_n^2(\bB, T_h \bB) & = 2^n \sum_{k = 1}^{2^n} d_\cc^2(\bB_{t^n_{k-1}, t^n_k}, (T_h \bB)_{t^n_{k - 1},t^n_k})
		\geq C 2^{-n/2} \sum_{k = 1}^{2^n}|Z_{n, k}|.
	\end{align*}
	Note that $(|Z_{n,k}|)_{k=1,\dots, 2^n}$ are independent half-normal random variables with mean $\sqrt{2/\pi}$ and variance $1 - 2/\pi$. 
    Thus, by Chebyshev's inequality, for any $\delta > 0$,
	\begin{align*}
		\P\bigg(\bigg\lvert\sum_{k = 1}^{2^n} \frac{|Z_{n,k}| - \sqrt{2/\pi}}{2^n}\bigg \rvert > \delta\bigg) \leq \delta^{-2}(1 - 2/\pi)2^{-n}.
	\end{align*}
	The right-hand side is summable in $n$ and so, by the first Borel--Cantelli lemma,
	\begin{equation*}\label{eq:LLN}
		\lim_{n \to \infty}2^{-n} \sum_{k = 1}^{2^n}|Z_{n, k}| = \sqrt{2/\pi} \quad \text{almost surely.}
	\end{equation*}
	Hence, we have
	\begin{align*}
		\liminf_{n \to \infty}C_n^2(\bB, T_h \bB)  \geq C \liminf_{n \to \infty}2^{n/2} \cdot 2^{-n} \sum_{k = 1}^{2^n}|Z_{n, k}| = C \sqrt{2/\pi} \lim_{n \to \infty} 2^{n/2} = + \infty,
	\end{align*}
	almost surely.
\end{proof}

When the marginals are related by a deterministic shift, we can identify the optimal coupling for the cost $C_\cH$, and we have the following equality.
\begin{lemma}\label{lem:h-determ-optim}
	Let $h \in \cH$ be deterministic and $\bnu = \Law(T_h \bB)$. Then $\T_{C_\cH, 2}(\bmu, \bnu)$ admits a unique optimal coupling, this coupling is induced by a Monge map, and
	\begin{align*}
		\T_{C_\cH, 2}^2(\bmu, \bnu) = \E[C_\cH^2(\bB, T_h \bB)] = 2 H(\bnu \| \bmu)<\infty.
	\end{align*}
\end{lemma}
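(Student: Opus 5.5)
The plan is to combine the upper bound from \Cref{thm:t2-top-foellmer} with a support argument showing that every coupling with finite cost must be the coupling $\bomega \mapsto T_h\bomega$.

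\textbf{Upper bound.} Since $h$ is deterministic, it is in particular adapted. By \Cref{rem:drift-adapted-shift} we have $H(\bnu\|\bmu)=\tfrac12\|h\|_\cH^2$, after correcting the factor: by Girsanov, $\log\frac{\D\nu}{\D\mu}=\int\dot h\,\D B-\tfrac12\|h\|_\cH^2$, so under $\nu$ the entropy equals $\tfrac12\|h\|_\cH^2$. In any case, \Cref{thm:t2-top-foellmer} gives
\[
\T_{C_\cH,2}^2(\bmu,\bnu)\le 2H(\bnu\|\bmu).
\]
The coupling $\Law(\bB,T_h\bB)$ is admissible because $\bnu=\Law(T_h\bB)$, and it has cost $C_\cH^2(\bB,T_h\bB)=\|h\|_\cH^2$. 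With the Girsanov computation, $2H(\bnu\|\bmu)=\|h\|_\cH^2$, which is finite. Hence it suffices to show that every coupling with finite cost equals $(\id,T_h)_\sharp\bmu$.

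\textbf{Uniqueness.} Take $\blambda\in\Pi(\bmu,\bnu)$ with finite cost. Then $\blambda$-a.s.\ we have $\ol\bomega=T_g\bomega$ with $g=\pi_1\ol\bomega-\pi_1\bomega\in\cH$. By \Cref{prop:BM-lift} and \eqref{eq:lift-shift}, we may write $\bomega=\Psi(\omega)$ and $\ol\bomega=\Psi(\ol\omega)$ with $\ol\omega-\omega\in\cH$. Projecting with $\pi_1\times\pi_1$ yields a coupling $\lambda\in\Pi(\mu,\mu_h)$, where $\mu_h=\Law(B+h)$, with cost
\[
\int\|\ol\omega-\omega\|_\cH^2\,\D\lambda=\int C_\cH^2\,\D\blambda .
\]
This reduces the question to uniqueness for the Euclidean Cameron--Martin transport from $\mu$ to its deterministic shift. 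Here the key is strict convexity of the squared Hilbert norm. By Jensen,
\[
\int\|\ol\omega-\omega\|_\cH^2\,\D\lambda\ \ge\ \Big\|\int(\ol\omega-\omega)\,\D\lambda\Big\|_\cH^2 ,
\]
where the Bochner integral in $\cH$ is well defined because the integrand is square-integrable. Its mean is $\E[B+h]-\E[B]=h$, since the path means are determined by the marginals and the evaluation maps $\cH\ni g\mapsto g_t$ are continuous linear. So the cost is at least $\|h\|_\cH^2$, with equality iff $\ol\omega-\omega=h$ $\lambda$-a.s., by strict convexity.

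Consequently any optimal $\blambda$ satisfies $\ol\bomega=T_h\bomega$ a.s., so $\blambda=(\id,T_h)_\sharp\bmu$. This coupling is induced by the Monge map $T_h$, which is measurable (indeed continuous) by \Cref{prop:shift}(iv). The optimal value is $\|h\|_\cH^2=2H(\bnu\|\bmu)$.

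\textbf{Main obstacle.} The delicate step is justifying the Jensen/mean step. One must ensure that $\ol\omega-\omega$ is a measurable $\cH$-valued map on the support of $\lambda$ with finite second moment, so that the Bochner mean exists. One must also identify this mean with $h$ via pointwise evaluations, using that $\E|B_t|<\infty$ under both marginals. Existence of an optimal coupling follows anyway from lower semicontinuity of $C_\cH$ (\Cref{lem:cost-mb}), although we exhibit it directly.
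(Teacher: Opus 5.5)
Your proof is correct and takes essentially the same route as the paper. You identify $2H(\bnu\|\bmu)=\|h\|_\cH^2$ via Girsanov/the F\"ollmer drift, and then apply Jensen (strict convexity of $\|\cdot\|_\cH^2$) to the $\cH$-valued difference $\pi_1\ol\bomega-\pi_1\bomega$, whose mean is $h$; this forces any optimal coupling onto the graph of $T_h$, and your passage through $\Psi$ and the projected Euclidean problem is just a rephrasing of working with $\pi_1$ directly on $\bOmega_\bbG$. Two small remarks: the reduction should say that every \emph{optimal} coupling equals $(\id\times T_h)_\sharp\bmu$, not every finite-cost one, since other finite-cost couplings exist (e.g.\ $\Law(\Psi(B),\Psi(RB+h))$ with $R$ a reflection of Wiener space along a unit vector of $\cH$), and the optimal-coupling claim is exactly what your Jensen step proves; also, your correction $H(\bnu\|\bmu)=\tfrac12\|h\|_\cH^2$ of the factor in \Cref{rem:drift-adapted-shift} is right.
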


\begin{proof}
	The second equality follows from \cref{thm:t2-top-foellmer}. Indeed, if $\bnu=\Law(T_h\bB)$, then $h=b^{\nu}$ for the Föllmer drift $b^{\nu}$ from \cref{thm:t2-top-foellmer}.
    Since $\bnu=\Law(T_h\bB)\ll \bmu$, we have that $H(\bnu\|\bmu)<\infty$.
	Now, from the definition of $C_\cH$, and the fact that $h \in \cH$ is deterministic, we have optimality of $\blambda^\ast = \Law(\bB, T_h \bB)$. 
	To see this, let $\blambda \in \Pi(\bmu, \bnu)$ and consider the event $E \coloneqq \{\,(\bomega, \ol \bomega) \in \bOmega_\bbG \times \bOmega_\bbG : \ol \bomega = T_{\pi_1 \ol \bomega - \pi_1\bomega}\bomega \,\}$. If $\blambda(E) < 1$, then by definition of $C_\cH$, we have $\int C_\cH^2(\bomega, \ol \bomega) \di \blambda (\bomega, \ol \bomega) = + \infty$. Suppose now that $\blambda(E) = 1$. Then, by Jensen's inequality,
	\begin{align*}
		\int C_\cH^2(\bomega, \ol \bomega)\di \blambda (\bomega, \ol \bomega) & = \int \|\pi_1 \ol \bomega - \pi_1 \bomega\|_\cH^2 \di \blambda(\bomega, \ol \bomega)\\
		& \geq \Bigl\| \int \pi_1 \ol \bomega \di \bnu(\ol \bomega) - \int \pi_1 \bomega \di \bmu(\bomega)\Bigr\|_\cH^2 = \|h\|_\cH^2.
	\end{align*}
	Equality holds if and only if $\blambda(\{\, (\bomega, \ol \bomega) \in \bOmega_\bbG \times \bOmega_\bbG : \pi_1 \ol \bomega - \pi_1 \bomega = h \,\}) = 1$. Combined with the condition that $\blambda(E) = 1$, we see that any optimal coupling is concentrated on the graph of the function $T_h \colon \bOmega_\bbG \to \bOmega_\bbG$. Thus, there is a unique optimal coupling of Monge form given by $\blambda^\ast = (\id \times T_h)_\sharp \bmu = \Law(\bB, T_h \bB)$, and $\T_{C_\cH, 2}^2(\bmu, \bnu) = \E[C_\cH^2(\bB, T_h \bB)]$.
\end{proof}

\begin{remark}
Let $h\in\mathcal{H}$ be as in \cref{prop:blow-up}. Since $\bnu=\Law(T_h\bB)\ll \bmu$, we have that $H(\bnu\|\bmu)<\infty$.
	By \cref{lem:rel-entropy-bound}, we thus observe that $\T_{C_n, 2}^2(\bmu, \bnu) \leq H(\bnu \| \bmu) <\infty$.
	However, \cref{prop:blow-up} shows that $\lim_{n \to \infty} C_n(\bB, T_h \bB) = + \infty$ almost surely. Thus $\Law(\bB, T_h \bB)$ is suboptimal for some $C_n$, $n \in \N$.
	
	In the case of the Heisenberg group $\bbG = \bbH^n$, this suboptimality can already be seen for $\T_{d_\cc, 2}(\bmu_1, \bnu_1)$. Indeed, \cite[Theorem 5.1]{AmRi04} shows that there is a unique optimal coupling and that this coupling is concentrated on the graph of some function $\phi \colon \bbH^n \to \bbH^n$. Taking, for example, $h$ as in \Cref{lem:shift-example}, it is clear that $\Law(\bB_1, (T_h \bB)_1)$ is not concentrated on any such graph, since $(T_h \bB)_1$ is not measurable with respect to $\sigma(\bB_1)$.
\end{remark}

\subsection{Riemannian approximation of the Heisenberg group}\label{sec:riemann}

In the case of classical Wiener space, we consider paths taking values in $\R^d$ with the Euclidean geometry. In the present Carnot group setting, we note the following two distinctions:
\begin{enumerate}[label = (\roman*)]
	\item $(\bbG, d_\cc)$ is a sub-Riemannian metric space (the sub-Laplacian is hypoelliptic);
	\item the group operation on $\bbG$ is non-commutative.
\end{enumerate}
It is shown in \cite[Theorem 2.12]{CaDaPaTy07} that any Carnot group can be approximated by Riemannian manifolds in the sense of pointed Gromov--Hausdorff convergence; see also \cite[Section 2.5]{AS2019HEFC} and, for the Heisenberg group, \cite[Section 6]{AmRi04}. In making this approximation, we move out of the sub-Riemannian setting but retain non-commutativity. We observe that, in this case, a $\Tc_2$ inequality on path space also holds (Proposition~\ref{prop:rie-t2-path}) 
and that the blow-up of discretised cost functions shown in \cref{prop:blow-up} does not occur (see Proposition~\ref{prop:RiemannBlowUp}). 
The failure to recover the $\Tc_2$ inequality on the underlying space via projection that was shown in \cref{prop:projection-fails} is still observed (Proposition~\ref{prop:RiemannContract}). 
However, in contrast to the sub-Riemannian case, we can use the contraction principle to obtain a $\Tc_p$ inequality on the underlying space for any $p \in [1, 2)$  (Proposition~\ref{prop:rie-t2-proj}).

In order to ease the presentation of this section, we specialise to the  Heisenberg group $\bbH = \bbH^1 \cong \R^2 \oplus \R$. Recall the left-invariant vector fields $(V_1, V_2, V_3) = (X, Y, Z)$, where
\begin{align*}
	X = \partial_{x}+\tfrac{1}{2}y\partial_{z}, \quad Y = \partial_{y}-\tfrac{1}{2}x\partial_{z}, \quad Z = [X, Y] = \partial_{z},
\end{align*}
and the group operation
\begin{equation*}
	\bx \bx^\prime = \big(x + x^\prime, y + y^\prime, z + z^\prime + \tfrac 12(x y^\prime {-} x^\prime y)\big), \quad \bx = (x, y, z), \, \bx^\prime = (x^\prime, y^\prime, z^\prime) \in \bbH.
\end{equation*}

For $\varepsilon > 0$, define the manifold $M_\varepsilon$ to be $\R^{2} \oplus \R$ equipped with the Euclidean topology and orthonormal basis $(X, Y, Z_\varepsilon)$, where $Z_\varepsilon = \varepsilon Z$. Let $d_\varepsilon$ denote the induced Riemannian distance, which is again left invariant. By \cite[Theorem 2.12]{CaDaPaTy07}, $(\bbH, d_\cc)$ is the limit of the Riemannian manifolds $(M_\varepsilon, d_\varepsilon)$ as $\varepsilon \to 0$, in the sense of pointed Gromov--Hausdorff convergence.
As in \cite[Section 6]{AmRi04}, we see that, for any $\bx, \by \in M_\varepsilon$,
\begin{align*}
	d_\varepsilon(\bx, \by) = \inf \Bigl\{\, \int_0^1 \sqrt{|\dot \gamma^1_t|^2 + |\dot \gamma^2_t|^2 + \varepsilon^{-2}|\dot \gamma^3_t - \tfrac{1}{2}(\gamma^1_t \dot \gamma^2_t - \dot \gamma^1_t \gamma^2_t)|^2}\di t : \gamma \in \AC([0, 1], M_\varepsilon), \, \gamma_0 = \bx, \gamma_1 = \by \,\Bigr\},
\end{align*}
and, for $\varepsilon_0, \varepsilon_1 > 0$ with $\varepsilon_1 \leq \varepsilon_0$,
\begin{align*}
	d_{\varepsilon_0}(\bx, \by) \leq d_{\varepsilon_1}(\bx, \by) \leq d_\cc(\bx, \by) = \sup_{\varepsilon > 0}d_\varepsilon(\bx, \by).
\end{align*}
Moreover, by \cite[Lemma 1.1]{Ju14}, there exists a constant $c > 0$ such that, for any $\varepsilon > 0$, $\bx, \by \in M_\varepsilon$,
\begin{align}\label{eq:est-juillet}
	d_\cc(\bx, \by) \leq d_\varepsilon(\bx, \by) + c \varepsilon.
\end{align}
We will also make use of the following bounds. There exists a constant $\ol\kappa > 0$ such that, for any $\varepsilon > 0$ and $\bx = (0, 0, z) \in M_\varepsilon$,
\begin{equation}\label{eq:rie-distance-estimates}
	\ol\kappa(|z|^{\frac 12} - \varepsilon) \leq  d_\varepsilon(0, \bx) \leq \epsilon^{-1} |z|,
\end{equation}
where the lower bound follows from \eqref{eq:gauge-distance} combined with \eqref{eq:est-juillet}, and the upper bound from considering the length of a purely vertical path.

On the space $(M_\varepsilon, d_\varepsilon)$, we consider the same non-commutative group law as on $\bbH$, but now the distance $d_\varepsilon$ is Riemannian.

Consider a Brownian motion $B$ on $\R^3$ with law $\mu$ and Cameron--Martin space $\cH = W^{1,2}_0([0, 1], \R^3)$.
We can define a Brownian motion $\bB^\varepsilon$ on $(M_\varepsilon, d_\varepsilon)$ by
\begin{align*}
	\D \bB^\varepsilon_t = X(\bB^\varepsilon) \di B^1_t + Y(\bB^\varepsilon) \di B^2_t + Z_\varepsilon \di B^3_t,
\end{align*}
and let $\bmu^\varepsilon = \Law(\bB^\varepsilon)$.
Explicitly, $\bB^{\epsilon}=(\bB^{\epsilon,(1)},\bB^{\epsilon,(2)})$ with
\begin{align*}
\D \bB^{\varepsilon,(1)}_t=\D (B^{1},B^{2})_t, \quad \D \bB^{\varepsilon,(2)}_t=\frac{1}{2}(B^2_t \di B^1_t - B^1_t \di B^2_t) + \epsilon\di B^3_t.
\end{align*}

Let $\Omega \coloneqq C_0([0, 1], \R^3)$ and $\bOmega^\varepsilon \coloneqq C_0([0, 1], M_\varepsilon)$, and define a map $\Psi^\varepsilon \colon \Omega \to \bOmega^\varepsilon$ by
\begin{align*}
	\Psi^\varepsilon(\omega) = (0, \varepsilon\omega^3) \Psi((\omega^1, \omega^2)), \quad \omega = (\omega^1, \omega^2, \omega^3) \in \Omega,
\end{align*}
where $\Psi$ is the lift map defined in \cref{def:lift}. Define its domain as $\dom(\Psi^\varepsilon) \coloneqq \{\, \omega = (\omega^1, \omega^2, \omega^3) \in \Omega : (\omega^1, \omega^2) \in \dom(\Psi) \,\}$. For $\omega$ absolutely continuous, $\Psi^\varepsilon$ takes the explicit form
\begin{align*}
	\Psi^\varepsilon(\omega)_t = \Big((\omega^1_t, \omega^2_t), \, \frac{1}{2}\int_0^t (\omega^1_r \di \omega^2_r - \omega^2_r \di \omega^1_r) + \varepsilon \omega^3_t\Big), \quad t \in [0, 1].
\end{align*}
Similarly to \cref{prop:BM-lift}, we have that $\bB^\varepsilon = \Psi^\varepsilon(B)$ almost surely. We can also define a shift map $T^\varepsilon_h \colon \bOmega^\varepsilon \to \bOmega^\varepsilon$, for any $h = (h^1, h^2, h^3) \in \cH$, by
\begin{align*}
	T^\varepsilon_h \bomega = (0, \varepsilon h^3)T_{(h^1, h^2)} \bomega, \quad \bomega \in \bOmega^\varepsilon.
\end{align*}
Then, similarly to \cref{prop:shift}, for any $\omega \in \dom(\Psi^\varepsilon)$ and $h \in \cH$, we have
\begin{align*}
	T^\varepsilon_h \Psi^\varepsilon(\omega) = \Psi^\varepsilon(\omega + h).
\end{align*}

Now define a cost function $C^\varepsilon_\cH \colon \bOmega^\varepsilon \times \bOmega^\varepsilon \to [0, \infty]$ by
\begin{align*}
	C^\varepsilon_\cH(\bomega, \wt\bomega) =
	\begin{cases}
		\|h\|_\cH, & \text{if} \; \wt\bomega = T^\varepsilon_h \bomega, \quad \text{for some}\; h \in \cH,\\
		+ \infty, & \text{otherwise}.
	\end{cases}
\end{align*}
Taking the same approach as in \cref{thm:top-t2-riedel-approach}, we see that $\bmu^\varepsilon$ satisfies a $\Tc_2$ inequality with this cost.
\begin{proposition}\label{prop:rie-t2-path}
	We have the cost-information inequality $\bmu^\varepsilon \in \Tc_2(\bOmega^\varepsilon, C^\varepsilon_\cH, 1)$.
\end{proposition}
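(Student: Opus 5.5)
The plan is to mirror the proof of \Cref{thm:top-t2-riedel-approach}, with $\nu = \mu$ the Wiener measure on $\Omega = C_0([0,1],\R^3)$ and with the lift $\Psi^\varepsilon$ and shift $T^\varepsilon_h$ in place of $\wt\Psi$ and $\wt T_h$. The starting point is the classical inequality \eqref{eq:t2}, namely $\mu \in \Tc_2(\Omega, c_\cH, 1)$ with $\cH = W^{1,2}_0([0,1],\R^3)$. Since $\bB^\varepsilon = \Psi^\varepsilon(B)$ almost surely, we have $\bmu^\varepsilon = \Psi^\varepsilon_\sharp \mu$. We will then apply the contraction principle \Cref{lem:contraction} with $\psi = \Psi^\varepsilon$, $L \equiv 1$, $E_0 = \dom(\Psi^\varepsilon)$ and $p=2$ ($q=\infty$), which gives constant $\alpha = 1$.

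\textbf{Step 1: set-up.} We first record that $\mu(\dom(\Psi^\varepsilon)) = 1$. This follows from \Cref{prop:BM-lift} applied to $(B^1,B^2)$, since membership in $\dom(\Psi^\varepsilon)$ only constrains the first two components. We also note that $\Psi^\varepsilon$ is Borel: it is a pointwise limit of continuous maps composed with a continuous left multiplication by $(0,\varepsilon\omega^3)$. Finally, $C^\varepsilon_\cH$ is measurable, indeed lower semicontinuous. This is proved exactly as in \Cref{lem:cost-mb}: given $\bomega$ and $\wt\bomega$, the candidate $h$ is determined by $(h^1,h^2) = \pi_1\wt\bomega - \pi_1\bomega$ and $\varepsilon h^3 = $ the residual vertical discrepancy. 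The needed continuity of $(h,\bomega)\mapsto T^\varepsilon_h\bomega$ follows from \Cref{prop:shift}(iv) together with continuity of the group multiplication.

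\textbf{Step 2: the contraction inequality.} We will show that $C^\varepsilon_\cH(\Psi^\varepsilon(\omega),\Psi^\varepsilon(\ol\omega)) \le c_\cH(\omega,\ol\omega)$ for all $\omega,\ol\omega \in \dom(\Psi^\varepsilon)$. If $\ol\omega - \omega \notin \cH$, the right-hand side is $+\infty$ and there is nothing to prove. If $h := \ol\omega - \omega \in \cH$, the lift–shift identity $T^\varepsilon_h\Psi^\varepsilon(\omega) = \Psi^\varepsilon(\omega+h)$ (stated above, analogous to \eqref{eq:lift-shift}) gives $\Psi^\varepsilon(\ol\omega) = T^\varepsilon_h \Psi^\varepsilon(\omega)$. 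Hence $C^\varepsilon_\cH \le \|h\|_\cH = c_\cH(\omega,\ol\omega)$. Equality is not needed, though it in fact holds because $h$ is uniquely determined.

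\textbf{Step 3: conclude and identify the main obstacle.} With Steps 1 and 2 in place, \Cref{lem:contraction} yields $\bmu^\varepsilon = \Psi^\varepsilon_\sharp\mu \in \Tc_2(\bOmega^\varepsilon, C^\varepsilon_\cH, 1)$. The only genuinely nontrivial point is the lift–shift identity for $\omega \in \dom(\Psi^\varepsilon)$ that is merely continuous.

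To check it, we expand $T^\varepsilon_h\Psi^\varepsilon(\omega) = (0,\varepsilon h^3)\,T_{(h^1,h^2)}\bigl((0,\varepsilon\omega^3)\Psi(\omega^1,\omega^2)\bigr)$. The elements $(0,\cdot)$ are central in $\bbH$, and inspection of \Cref{prop:shift}(ii) shows that $T_{(h^1,h^2)}$ commutes with left multiplication by a central path, since it only involves the first level. Therefore
\begin{align*}
T^\varepsilon_h\Psi^\varepsilon(\omega) = (0,\varepsilon(\omega^3+h^3))\,T_{(h^1,h^2)}\Psi(\omega^1,\omega^2) = (0,\varepsilon(\omega^3+h^3))\,\Psi(\omega^1+h^1,\omega^2+h^2) = \Psi^\varepsilon(\omega+h),
\end{align*}
where the middle equality is \eqref{eq:lift-shift}.
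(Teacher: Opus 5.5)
Your proposal is correct and follows the paper's proof. You push forward $\mu \in \Tc_2(\Omega, c_\cH, 1)$ through $\Psi^\varepsilon$ via the contraction principle with $L \equiv 1$, using $\bmu^\varepsilon = \Psi^\varepsilon_\sharp \mu$, $\mu(\dom(\Psi^\varepsilon)) = 1$ and the lift--shift identity $T^\varepsilon_h \Psi^\varepsilon(\omega) = \Psi^\varepsilon(\omega + h)$; your extra checks of Borel measurability and the centrality argument for that identity fill in details the paper only asserts by analogy.
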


\begin{proof}
	As noted in \cref{sec:prelim-t2}, $\mu \in \Tc_2(\Omega, c_\cH, 1)$, where the cost $c_\cH$ is defined in \cref{eq:BM-cost}. We also have that $\bmu^\varepsilon = \Psi^\varepsilon_\sharp \mu$ and $\mu(\dom(\Psi^\varepsilon)) = 1$. Moreover, for any $x, y \in \dom(\Psi^\varepsilon)$ with $h \coloneqq y - x \in \cH$,
	\begin{align*}
		C^\varepsilon_\cH(\Psi^\varepsilon(x), \Psi^\varepsilon(y)) & = C^\varepsilon_\cH(\Psi^\varepsilon(x), \Psi^\varepsilon(x + h)) = C^\varepsilon_\cH(\Psi^\varepsilon(x), T^\varepsilon_h\Psi^\varepsilon(x)) = \|h\|_\cH^2 = c_\cH(x, y).
	\end{align*}
	In case $y - x \notin \cH$, then both sides are infinite. Thus, applying the contraction principle from \cref{lem:contraction}, we have that $\bmu^\varepsilon \in \Tc_2(\bOmega^\varepsilon, C^\varepsilon_\cH, 1)$.
\end{proof}

In contrast to the sub-Riemannian setting, $\bmu^\varepsilon \in \Tc_2(\bOmega^\varepsilon, C^\varepsilon_\cH, 1)$ implies a $\Tc_p$ inequality for $\bmu^\varepsilon_1 \coloneqq \Law(\bB^\varepsilon_1)$, for $p \in [1, 2)$, with a constant depending on $\varepsilon$.

Let $P_1 \colon \bOmega^\varepsilon \to M_\varepsilon$ denote the projection $P_1(\bomega) = \bomega_1$, for $\bomega \in \bOmega^\varepsilon$, so that $\bmu^\varepsilon_1 = (P_1)_\sharp \bmu^\varepsilon$.

\begin{proposition}\label{prop:rie-t2-proj}
	Suppose that $\bmu^\varepsilon \in \Tc_2(\bOmega^\varepsilon, C^\varepsilon_\cH, 1)$. Then, for any $p \in [1, 2)$, there exists $\alpha(\varepsilon, p) > 0$ such that $\lim_{\varepsilon \to 0} \alpha(\varepsilon, p) = 0$ and $\bmu^\varepsilon_1 \in \Tc_p(M_\varepsilon, d_\varepsilon, \alpha(\varepsilon, p))$.
\end{proposition}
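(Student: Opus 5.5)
The plan is to apply the contraction principle of \Cref{lem:contraction} with $E = \bOmega^\varepsilon$, $\eta = \bmu^\varepsilon$, $c = C^\varepsilon_\cH$, $S = M_\varepsilon$, $\wt c = d_\varepsilon$ and $\psi = P_1$. We need a measurable $L \colon \bOmega^\varepsilon \to [0,\infty]$ with $d_\varepsilon(\bomega_1, \wt\bomega_1) \le L(\bomega)\, C^\varepsilon_\cH(\bomega,\wt\bomega)$ on a full-measure set, and $L \in L^q(\bmu^\varepsilon)$ for every $q<\infty$. The lemma then gives $\bmu^\varepsilon_1 \in \Tc_p(M_\varepsilon, d_\varepsilon, \|L\|_{L^q}^{-2})$ with $q = 2p/(2-p)$, and we set $\alpha(\varepsilon,p) \coloneqq \|L\|_{L^q(\bmu^\varepsilon)}^{-2}$.

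\emph{Pointwise estimate.} We only need to treat $\wt\bomega = T^\varepsilon_h\bomega$ with $h=(h^1,h^2,h^3)\in\cH$, since otherwise the right-hand side is infinite. Restricting to $\bomega = \Psi^\varepsilon(\omega)$ with $\omega \in \dom(\Psi^\varepsilon)$, left invariance gives
\[
d_\varepsilon(\bomega_1, \wt\bomega_1) = d_\varepsilon(0, \bomega_1^{-1}\wt\bomega_1).
\]
We write $\bomega_1^{-1}\wt\bomega_1 = \bh_1 \btheta_{0,1}\cdot(0,\varepsilon h^3_1)$, where $\bh=\Psi((h^1,h^2))$ and $\btheta_{0,1} = (0, \int_0^1 \SCO\, h_{0,r}\otimes \D\omega_r)$ is the commutativity error from \Cref{rem:commutativity-error}. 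The central factor $(0,\varepsilon h^3_1)$ commutes with everything. By the triangle inequality,
\[
d_\varepsilon(0, \bomega_1^{-1}\wt\bomega_1) \le d_\varepsilon(0,\bh_1) + d_\varepsilon(0,\btheta_{0,1}) + d_\varepsilon\bigl(0,(0,\varepsilon h_1^3)\bigr).
\]
Each term is then bounded separately:
\begin{itemize}
\item $d_\varepsilon(0,\bh_1) \le d_\cc(0,\bh_1) \le \|(h^1,h^2)\|_\cH$, by the length of the horizontal curve $\bh$;
\item $d_\varepsilon(0,(0,\varepsilon h^3_1)) \le |h^3_1| \le \|h\|_\cH$, by the upper bound in \eqref{eq:rie-distance-estimates};
\item $d_\varepsilon(0,\btheta_{0,1}) \le \varepsilon^{-1}\bigl|\int_0^1 \SCO\, h_r\otimes \D\omega_r\bigr|$, again by the upper bound in \eqref{eq:rie-distance-estimates}.
\end{itemize}
This last, vertical term is exactly where the Riemannian structure helps: in the sub-Riemannian case it only scaled like a square root, which is what made \Cref{prop:projection-fails} fail.

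\emph{Main obstacle.} We must bound $\bigl|\int_0^1 h_r \otimes \D\omega_r\bigr|$ by $L_0(\omega)\,\|h\|_\cH$ with $L_0$ independent of $h$ and having all moments. The plan is to integrate by parts:
\[
\int_0^1 h_r\otimes \D\omega_r = h_1\otimes\omega_1 - \int_0^1 \dot h_r\otimes \omega_r \,\D r .
\]
Cauchy--Schwarz together with $|h_1|\le\|h\|_\cH$ then gives the bound $2\|\omega\|_\infty \|h\|_\cH$, up to a constant depending only on the structure constants. This pathwise Riemann--Stieltjes form is valid for continuous $\omega$, consistent with \Cref{prop:shift}(ii). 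We therefore take
\[
L(\bomega) = 2 + c\,\varepsilon^{-1}\|\pi_1\bomega\|_\infty,
\]
which is measurable and depends only on $\bomega$. By Fernique's theorem (or Doob's inequality), $\|B\|_\infty$ has Gaussian tails, so $L\in L^q(\bmu^\varepsilon)$ for all $q<\infty$; this is why exactly $p<2$ is allowed. Note that $L\notin L^\infty$, consistent with \Cref{prop:RiemannContract} and the failure at $p=2$.

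\emph{Conclusion.} Applying \Cref{lem:contraction} gives $\alpha(\varepsilon,p) = \|L\|_{L^q}^{-2}$. Since $\|L\|_{L^q} \ge c\,\varepsilon^{-1}\,\E[\|B\|_\infty^q]^{1/q} - 2 \to \infty$ as $\varepsilon\to0$, we obtain $\alpha(\varepsilon,p)\to 0$, as claimed.
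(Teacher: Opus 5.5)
Your proposal is correct and follows the paper's proof essentially step for step. The shared ingredients are the contraction principle applied with $P_1$, the decomposition into the lift of $h$ times the commutativity error, the linear vertical bound $d_\varepsilon(0,(0,z)) \le \varepsilon^{-1}|z|$, and the integration-by-parts estimate giving $2\|\omega\|_\infty \|h\|_\cH$ with a weight $L$ in every $L^q$. The only cosmetic difference is that you split off the $h^3$-component as a separate triangle-inequality term, giving additive constant $2$. The paper instead absorbs it into the length of the curve $\Psi^\varepsilon(h)$ in $M_\varepsilon$, giving $L = 1 + 2\varepsilon^{-1}\|\omega\|_\infty$.
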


\begin{proof}
	Suppose that $\bomega, \wt\bomega \in \bOmega^\varepsilon$ with $\wt\bomega = T^\varepsilon_h \bomega$, for some $h \in \cH$. Define $\bh = \Psi^\varepsilon (h)$ and $\bgamma_1 = \bomega_1^{-1}(T^\varepsilon_h \bomega)_1$. Then
	\begin{align*}
		d_\varepsilon(P_1(\bomega), P_1(\wt\bomega)) & \leq d_\varepsilon(\bh_1, \bgamma_1) + d_\varepsilon(0, \bh_1).
	\end{align*}
	We bound $d_\varepsilon(0, \bh_1)$ by the length of the curve $t \mapsto \bh_t$ in $\bOmega^\varepsilon$ to get
	\begin{equation}\label{eq:rie-dist-bound}
	\begin{split}
		d_\varepsilon(0, \bh_1) & \leq \int_0^1 \sqrt{|\dot h^1_r|^2 + |\dot h^2_r|^2 + \varepsilon^{-2} |\varepsilon \dot h^3_r + \tfrac 12(h^1_r \dot h^2_r - h^2_r \dot h^1_r) - \tfrac 12(h^1_r \dot h^2_r - h^2_r \dot h^1_r)|^2} \di r\\
		& = \int_0^1 \sqrt{|\dot h^1_r|^2 + |\dot h^2_r|^2 + |\dot h^3_r|^2} \di r \leq \|h\|_\cH.
	\end{split}
	\end{equation}
	Similarly to \cref{rem:commutativity-error}, by integration by parts,
	\begin{align}\label{eq:rie-comm-error}
		\bh_1^{-1} \bgamma_1 = \Big(0, \int_0^1 \Big(h^1_{r} \di \omega^2_r - h^2_{r} \di \omega^1_r\Big)\Big),
	\end{align}
	and so, by \eqref{eq:rie-distance-estimates},
	\begin{align*}
		d_\varepsilon(\bh_1, \bgamma_1) & \leq \varepsilon^{-1} \Big|\int_0^1 \Big(h^1_{r} \di \omega^2_r - h^2_{r} \di \omega^1_r\Big)\Big|\\
		& \leq 2 \varepsilon^{-1} \|\omega\|_\infty \|h\|_\cH,
	\end{align*}
	where $\|\omega\|_\infty \coloneqq \sup_{t \in [0, 1]} |(\omega^1_t, \omega^2_t)|$. 
    Hence
	\begin{align*}
		d_\varepsilon(P_1(\bomega), P_1(\wt\bomega)) \leq (1 + 2 \varepsilon^{-1} \|\omega\|_\infty) \|h\|_\cH = (1 + 2 \varepsilon^{-1} \|\omega\|_\infty) C^\varepsilon_\cH(\bomega, \wt\bomega).
	\end{align*}
	In the case that there does not exist $h \in \cH$ such that $\wt\bomega = T_h \bomega$, then the same inequality holds trivially.
	
	Next note that, for any $q \in [1, \infty)$, $\bomega \mapsto \|\omega\|_\infty \in L^q(\bmu)$. Let $p \in [1, 2)$ and set $q = \tfrac{2p}{2 - p} \in [2, \infty)$. By the contraction principle from \Cref{lem:contraction}, we conclude that
	\begin{align*}
		\bmu_1 \in \Tc_p(M_\varepsilon, d_\varepsilon, \alpha(\varepsilon, p)),
	\quad\text{where}\quad
		\alpha(\varepsilon, p) = \bigl(1 + 2 \varepsilon^{-1} \E[\|B\|_\infty^q]^\frac{1}{q}\bigr)^{-2},
	\end{align*}
	and we see that $\lim_{\varepsilon \to 0} \alpha(\varepsilon, p) = 0$.
\end{proof}

Analogously to the sub-Riemannian setting, however, it is not possible to recover a $\Tc_2$ inequality via the contraction principle, as the next result shows.

\begin{proposition}\label{prop:RiemannContract}
	Let $\wt \bOmega \subseteq \bOmega^\varepsilon$ such that $\bmu(\wt\bOmega) = 1$, and let $L \colon \bOmega^\varepsilon \to [0, \infty]$ be a measurable function such that
	\begin{align*}
		d_\varepsilon(P_1 \bomega, P_1 \wt\bomega) \leq L(\bomega) C_\cH^\varepsilon(\bomega, \wt\bomega),
	\end{align*}
	for all $\bomega, \wt\bomega \in \wt\bOmega$. Then $L \notin L^\infty(\bmu^\varepsilon)$.
\end{proposition}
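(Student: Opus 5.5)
Proof plan. I would mimic the proof of \Cref{prop:projection-fails}: argue by contradiction, using small deterministic shifts in the horizontal directions. The Riemannian cost of the endpoint displacement will then scale like $\sqrt{\delta}$, while $C^\varepsilon_\cH$ scales like $\delta$. The difference from the sub-Riemannian case is that $d_\varepsilon$ is only comparable to $|z|^{1/2}$ for vertical displacements $|z|\gtrsim\varepsilon^2$; for $|z|\ll\varepsilon^2$ it behaves like $\varepsilon^{-1}|z|$. So the plan is not to send $\delta\to 0$. Instead I fix $\delta$ and exploit the unboundedness of the Gaussian factor, which is exactly why only $L\notin L^\infty$ is claimed.

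\emph{Step 1 (the shift).} Suppose for contradiction that $L\le K$ $\bmu^\varepsilon$-a.s. for some $K<\infty$. Take $h=(\delta t,0,0)\in\cH$, so that $C^\varepsilon_\cH(\bB^\varepsilon,T^\varepsilon_h\bB^\varepsilon)=\delta$. By \eqref{eq:rie-comm-error} (equivalently, \Cref{rem:commutativity-error} and \Cref{lem:shift-example}),
\[
(\bB^\varepsilon_1)^{-1}(T^\varepsilon_h\bB^\varepsilon)_1=\bh_1\,(0,0,\delta\textstyle\int_0^1 r\,\D B^2_r),
\]
up to sign conventions, with $\bh_1=(\delta,0,0)$. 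The vertical term equals $\delta\,3^{-1/2} Z$ with $Z$ standard normal.

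\emph{Step 2 (lower bound on the endpoint distance).} By left invariance and the triangle inequality,
\[
d_\varepsilon(P_1\bB^\varepsilon,P_1T^\varepsilon_h\bB^\varepsilon)\ge d_\varepsilon(0,(0,0,\delta Z/\sqrt3))-d_\varepsilon(0,\bh_1)\ge \ol\kappa\bigl((\delta|Z|/\sqrt3)^{1/2}-\varepsilon\bigr)-\delta.
\]
Here I use \eqref{eq:rie-distance-estimates} for the first term and $d_\varepsilon(0,\bh_1)\le\|h\|_\cH=\delta$, as in \eqref{eq:rie-dist-bound}.

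\emph{Step 3 (the contradiction).} The desired inequality, applied to the pair $(\bB^\varepsilon,T^\varepsilon_h\bB^\varepsilon)$, gives
\[
\ol\kappa\bigl((\delta|Z|/\sqrt3)^{1/2}-\varepsilon\bigr)-\delta\le K\delta
\]
almost surely. Since $Z$ is Gaussian, $|Z|$ exceeds any threshold with positive probability, so this inequality fails with positive probability.

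\emph{Main obstacle.} The inequality is only assumed on $\wt\bOmega\times\wt\bOmega$, and $T^\varepsilon_h\bB^\varepsilon$ might a priori leave $\wt\bOmega$. I would handle this via Cameron--Martin: $\Law(T^\varepsilon_h\bB^\varepsilon)=\Law(\Psi^\varepsilon(B+h))\sim\bmu^\varepsilon$, since $h$ is a deterministic Cameron--Martin path. Hence $T^\varepsilon_h\bB^\varepsilon\in\wt\bOmega$ almost surely, and the same holds for $\bB^\varepsilon$. This makes the almost sure inequality in Step 3 legitimate and completes the contradiction. One should also check measurability of $\{|Z|\text{ large}\}$ jointly with the full-measure event, but that is routine.
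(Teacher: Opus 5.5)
Your proposal is correct and follows essentially the same route as the paper. You fix the linear horizontal shift $h_t=(\delta t,0,0)$ (the paper takes $\delta=1$), and use \eqref{eq:rie-comm-error} and \eqref{eq:rie-distance-estimates} to bound the endpoint distance below by a quantity growing like $|Z|^{1/2}$, while $C^\varepsilon_\cH=\delta$. You then contradict $L\in L^\infty(\bmu^\varepsilon)$ through the unboundedness of $|Z|$, exactly as the paper does. Your Cameron--Martin argument that $T^\varepsilon_h\bB^\varepsilon\in\wt\bOmega$ almost surely, and your explicit handling of the horizontal part $\bh_1$ and of the $-\varepsilon$ correction, supply details the paper's proof leaves implicit.
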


\begin{proof}
	Suppose for a contradiction that $L \in L^\infty(\bmu^\varepsilon)$. Define $h \in \cH$ by $h_t = (t, 0, 0) \in \R^3$, for all $t \in [0, 1]$. Then, similarly to \Cref{lem:shift-example}, we can apply \eqref{eq:rie-comm-error} and \eqref{eq:rie-distance-estimates} to see that there exists a standard normal random variable $Z$ and a constant $C(\varepsilon) > 0$ such that
	\begin{align*}
		d_\varepsilon^2(\bB^\varepsilon_1, (T^\varepsilon_h \bB^\varepsilon)_1) \geq C(\varepsilon) |Z|.
	\end{align*}
	We also have $C_\cH^\varepsilon (\bB^\varepsilon, T^\varepsilon_h \bB^\varepsilon)= \|h\|_\cH = 1$. Thus $C(\varepsilon)|Z| \leq L(\bB^\varepsilon)$. Since $|Z|$ is not essentially bounded, we arrive at a contradiction.
\end{proof}

Finally, in the Riemannian setting, we do not observe the blow-up shown in \Cref{prop:blow-up}. For $n \in \N$, define a cost $C^\varepsilon_n \colon \bOmega^\varepsilon \times \bOmega^\varepsilon \to [0, \infty)$ by
\begin{align*}
	C^\varepsilon_n(\bomega, \wt\bomega)^2 & = 2^n \sum_{k = 1}^{2^n}d_\varepsilon^2(\bomega_{t^n_{k - 1}, t^n_k}, \wt\bomega_{t^n_{k - 1}, t^n_k}), \quad \bomega, \wt \bomega \in \bOmega^\varepsilon.
\end{align*}

\begin{proposition}\label{prop:RiemannBlowUp}
	For any $h \in \cH$, we have
	\begin{align*}
		\limsup_{n \to \infty} C^\varepsilon_n(\bB^\varepsilon, T^\varepsilon_h \bB^\varepsilon) \leq C^\varepsilon_\cH(\bB^\varepsilon, T^\varepsilon_h \bB^\varepsilon) = \|h\|_\cH,
	\end{align*}
	almost surely.
\end{proposition}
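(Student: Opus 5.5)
We need to show that $\limsup_n C^\varepsilon_n(\bB^\varepsilon,T^\varepsilon_h\bB^\varepsilon)\le \|h\|_\cH$ almost surely. The strategy is to bound each increment distance $d_\varepsilon(\bB^\varepsilon_{s,t},(T^\varepsilon_h\bB^\varepsilon)_{s,t})$ by splitting it, via left invariance and the triangle inequality, into a ``horizontal translation'' part controlled by $h$ and a ``commutativity error'' part. The error part costs only $\varepsilon^{-1}|z|$ rather than $|z|^{1/2}$, thanks to the Riemannian upper bound in \eqref{eq:rie-distance-estimates}. This linear dependence on the vertical component is exactly what fails in the sub-Riemannian case of \Cref{prop:blow-up}.

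Write $\bomega=\bB^\varepsilon=\Psi^\varepsilon(B)$, $\bh=\Psi^\varepsilon(h)$, and fix a dyadic interval $[s,t]=[t^n_{k-1},t^n_k]$. As in \Cref{rem:commutativity-error} and \eqref{eq:rie-comm-error}, integrating by parts gives
\[
\bh_{s,t}^{-1}\bomega_{s,t}^{-1}(T^\varepsilon_h\bomega)_{s,t}=\Big(0,\,\theta_{s,t}\Big),\qquad \theta_{s,t}=\int_s^t\big(h^1_{s,r}\di B^2_r-h^2_{s,r}\di B^1_r\big),
\]
up to a fixed sign convention. By left invariance and the triangle inequality,
\[
d_\varepsilon(\bomega_{s,t},(T^\varepsilon_h\bomega)_{s,t})\le d_\varepsilon(0,\bh_{s,t})+d_\varepsilon(0,(0,\theta_{s,t})).
\]
The argument of \eqref{eq:rie-dist-bound}, restricted to $[s,t]$ (using the lift of $h_{s,\cdot}$, which has the same increment), gives
\[
d_\varepsilon(0,\bh_{s,t})\le\int_s^t|\dot h_r|\di r=:a_k,
\]
and \eqref{eq:rie-distance-estimates} gives $d_\varepsilon(0,(0,\theta_{s,t}))\le\varepsilon^{-1}|\theta_{s,t}|=:b_k$.

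Minkowski's inequality in $\ell^2$ then yields
\[
C^\varepsilon_n\le\Big(2^n\sum_k a_k^2\Big)^{1/2}+\varepsilon^{-1}\Big(2^n\sum_k\theta_{t^n_{k-1},t^n_k}^2\Big)^{1/2}.
\]
By Cauchy--Schwarz, $2^n a_k^2\le\int_{t^n_{k-1}}^{t^n_k}|\dot h_r|^2\di r$, so the first term is at most $\|h\|_\cH$ for every $n$. It remains to show that $S_n:=2^n\sum_k\theta_k^2\to0$ almost surely, where $\theta_k=\theta_{t^n_{k-1},t^n_k}$.

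The main step is this almost-sure convergence. For deterministic $h$, the $\theta_k$ are independent centered Gaussians with $\E[\theta_k^2]=\int_{I_k}|h_{t^n_{k-1},r}|^2\di r\le 2^{-n}\int_{I_k}|\dot h|^2$, writing $I_k=[t^n_{k-1},t^n_k]$. Hence $\E S_n\le 2^{-n}\|h\|_\cH^2$. Since $\sum_n\E S_n<\infty$, monotone convergence gives $\sum_n S_n<\infty$ almost surely, and so $S_n\to0$ almost surely. No Chebyshev or variance computation is needed.

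If $h$ is allowed to be random but adapted, the same bound on $\E S_n$ holds by the Itô isometry whenever $\E\|h\|^2_\cH<\infty$. The statement, however, concerns a fixed $h\in\cH$, so the deterministic case suffices. The final equality $C^\varepsilon_\cH(\bB^\varepsilon,T^\varepsilon_h\bB^\varepsilon)=\|h\|_\cH$ follows directly from the definition of $C^\varepsilon_\cH$. The only delicate points are keeping track of the sign convention in the commutator identity and checking that the local version of \eqref{eq:rie-dist-bound} holds on subintervals; both are routine.
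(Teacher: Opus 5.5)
Your proof is correct. Its skeleton is the same as the paper's. Left invariance and the triangle inequality split each increment distance into two parts: $d_\varepsilon(0,\bh_{s,t})$, bounded by the length of the lifted curve and Cauchy--Schwarz, and the central commutativity error, bounded linearly via $d_\varepsilon(0,(0,z))\le\varepsilon^{-1}|z|$.

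The routes differ in how you show that the error sum $S_n=2^n\sum_k\theta_k^2$ vanishes.

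\textbf{The paper's route.} It argues pathwise. Integrating by parts gives
$|\theta_{s,t}|\le c\,\|B\|_\beta|t-s|^{\beta+1/2}\bigl(\int_s^t|\dot h_r|^2\di r\bigr)^{1/2}$, where $\|B\|_\beta$ is the $\beta$-H\"older norm with $\beta<1/2$. Hence $S_n\le c\,\|B\|_\beta^2\|h\|_\cH^2\,2^{-2\beta n}$. This holds on a single full-measure event that serves all $h\in\cH$ simultaneously, and it would even cover random, anticipating $h$. The geometric rate is also what lets the paper combine the two pieces with Young's inequality weighted by $(1+n)$.

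\textbf{Your route.} You use the Wiener isometry and the summability of $\E S_n$. This requires $h$ deterministic, or adapted with $\E\|h\|_\cH^2<\infty$, and the exceptional set depends on $h$. That is enough for the statement as formulated, and it avoids H\"older regularity altogether. Your use of Minkowski's inequality in $\ell^2$ is also cleaner than the weighted Young inequality, since you only need $S_n\to0$ rather than $nS_n\to0$.

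\textbf{One slip to fix.} The bound you state, $\E[\theta_k^2]\le 2^{-n}\int_{I_k}|\dot h|^2$, only yields $\E S_n\le\|h\|_\cH^2$, which is not summable in $n$. Instead, insert $|h_{t^n_{k-1},r}|^2\le(r-t^n_{k-1})\int_{I_k}|\dot h|^2$ into the isometry. This gives $\E[\theta_k^2]\le\tfrac12\,2^{-2n}\int_{I_k}|\dot h|^2$, hence $\E S_n\le\tfrac12\,2^{-n}\|h\|_\cH^2$. That is the summable bound your monotone-convergence step actually needs.
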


\begin{proof}
	Define $\bZ$ by $\bZ_{s, t} = \bB_{s,t}^{-1}(T^\varepsilon_h \bB)_{s,t}$, for $s, t \in [0, 1]$ with $s \leq t$, and $\bh = \Psi^\varepsilon(h)$. Let $n \in \N$. By Young's inequality, we bound
	\begin{align}\label{eq:young-bound}
		C^\varepsilon_n(\bB^\varepsilon, T^\varepsilon_h \bB^\varepsilon)^2 \leq (1 + n) C^\varepsilon_n(\bh, \bZ)^2 + (1 + \tfrac{1}{n}) C^\varepsilon_n(0, \bh)^2.
	\end{align}
	As in \eqref{eq:rie-dist-bound}, we bound $d_\varepsilon(\bh_{t^n_{k-1}}, \bh_{t^n_k})$ by the length of the curve $\bh$; i.e.
	\begin{align*}
		d_\varepsilon(\bh_{t^n_{k-1}}, \bh_{t^n_k}) \leq \int_{t^n_{k-1}}^{t^n_k} \sqrt{|\dot h^1_r|^2 + |\dot h^2_r|^2 + |\dot h^3_r|^2} \di r.
	\end{align*}
	Applying the Cauchy--Schwarz inequality, we have
	\begin{align*}
		C^\varepsilon_n(0, \bh)^2 = 2^n \sum_{k = 1}^{2^n}d_\varepsilon^2(\bh_{t^n_{k-1}}, \bh_{t^n_k}) \leq \sum_{k = 1}^{2^n}\int_{t^n_{k-1}}^{t^n_k} \Bigl(|\dot h^1_r|^2 + |\dot h^2_r|^2 + |\dot h^3_r|^2\Bigr) \D r = \|h\|_\cH^2,
	\end{align*}
	and so
	\begin{align*}
		\limsup_{n \to \infty} (1 + \tfrac{1}{n})C^\varepsilon_n(0, \bh)^2 \leq \|h\|_\cH^2.
	\end{align*}
	
	Next, similarly to \eqref{eq:rie-comm-error} and \cref{rem:commutativity-error}, we have
	\begin{align*}
		\bh_{s,t}^{-1} \bZ_{s,t} = \Big(0, \int_s^t \Big(h^1_{s,r} \di B^2_r - h^2_{s,r} \di B^1_r\Big)\Big).
	\end{align*}
	Therefore, using the estimate \eqref{eq:rie-distance-estimates} and the fact that $B$ is almost surely $\beta$-H\"older continuous for any $\beta \in (0, 1/2)$, there exists a constant $c > 0$ such that we have the almost sure bound
	\begin{align*}
		d_\varepsilon^2(0, \bh_{s,t}^{-1} \bZ_{s,t}) & \leq \varepsilon^{-2}\Big|\int_s^t h^1_{s, r} \di B^2_r \Big|^2 + \varepsilon^{-2}\Big|\int_s^t h^2_{s, r} \di B^1_r \Big|^2\\
		& \leq c \varepsilon^{-2} \|B\|_\beta^2|t - s|^{1 + 2\beta} \int_s^t |\dot h_r|^2 \di r,
	\end{align*}
	where $\|B\|_\beta$ is the $\beta$-H\"older norm of $B$. Hence
	\begin{align*}
		C^\varepsilon_n(\bh, \bZ)^2 & = 2^n \sum_{k = 1}^{2^n}d_\varepsilon^2(0, \bh_{t^n_{k-1}, t^n_k}^{-1}\bZ_{t^n_{k-1}, t^n_k}) \leq c \varepsilon^{-2} 2^n 2^{-(1 + 2 \beta)n}\|h\|_\cH^2 \|B\|_\beta^2 = c \varepsilon^{-2} \|h\|_\cH^2 \|B\|_\beta^2 2^{- 2\beta n},
	\end{align*}
	and so $\lim_{n \to \infty} (1 + n)C^\varepsilon_n(\bh, \bZ)^2 = 0$ almost surely. We conclude by \Cref{eq:young-bound}.
\end{proof}

\subsection{\texorpdfstring{$\Gamma$}{Gamma}-convergence of the cost functions}\label{sec:gamma-conv}

Despite the pointwise blow-up of the cost functions $C_n$ that we demonstrated in \cref{prop:blow-up}, we now show that 
$C_n$  does converge to  $C_\cH$  in a variational sense. More precisely, the sequence $C_n$ converges to $C_\cH$ in the sense of $\Gamma$-convergence,   a notion of convergence for families of minimisation problems that is formulated in terms of asymptotic lower and upper bounds. 
On a metric space $(E,d_E)$, we say that a sequence of functionals $F_n \colon E\to \R\cup\{\infty\}$ $\Gamma$-converges to a limit $F_\infty \colon E\to \R\cup\{\infty\}$ if
\begin{enumerate}[label = (\roman*)]
	\item for every sequence $x_n \to x$ in $E$, we have $F_\infty(x) \leq\liminf_{n\to\infty} F_n(x_n)$; and
	\item for every $x \in E$, there exists a sequence $\wt x_n\to x$ in $E$ such that $\limsup_{n\to\infty} F_n(\wt x_n) \leq F_\infty(x)$.
\end{enumerate}
The sequence $(\wt x_n)$ in condition (ii) is called a \emph{recovery sequence} as it ``recovers'' the correct energy level 
$F_\infty(x)$ from the approximating energies $F_n(\wt x_n)$ by adding suitable oscillations to $x$. One may view 
$\Gamma$-convergence as describing the asymptotic behavior of energy landscapes, in close analogy with large deviation principles, which characterise the asymptotics of probability measures via their rate functions. Indeed, the interplay between $\Gamma$-convergence and large deviation principles has been studied in several publications; see e.g.\ \cite{Mari2012Gamma,Berm2018LDGM}.
A central advantage of  $\Gamma$-convergence is its stability property: convergence of functionals implies convergence of minimal values and, under mild compactness assumptions, convergence of (almost) minimisers. For a comprehensive treatment we refer to the monographs \cite{D1993IC,B2002GB,R2018CV}.

We will see that the $\Gamma$-convergence of the cost functions $C_n$ implies the convergence of the associated optimal transport problems; i.e.\ for every $\bnu\in \Pc(\bOmega_\bbG)$ there
exists a sequence of probability measures $\wt\bnu^n\in\Pc(\bOmega_\bbG)$ such
that the associated transport costs
$\T_{C_n, 2}(\wt\bnu^n,\bmu)$ converges to
$\T_{C_\cH, 2}(\bnu,\bmu)$ as $n\to\infty$.

In the Euclidean case with cost $c_n$ defined as in 
\eqref{eq:cost-euclidean}, we have for $\omega,\wt\omega\in C_0([0,1];\R^d)$ with $h = \omega-\wt \omega$ the 
 formula $c_n(\omega,\wt\omega) = \int_0^1 |\partial_s \wh h^n|^2\di s$,
where $\wh h^n$ is the piecewise affine interpolant for $h$.
One readily verifies that  $c_n(\omega,\wt\omega)$ converges
to $\|h\|_\cH$ if $h\in\cH$ and to $+\infty$ otherwise; cf.\ \Cref{lem:euclidean-liminf}. In particular, the $\Gamma$-convergence of $c_n$ also holds in this setting with the same limiting cost.

Recall that we consider the metric space $(\bOmega_\bbG, d_\infty)$, where $d_\infty$ is the uniform metric defined in \eqref{eq:unif-metric}.

We start with the following lemma, which gives the pointwise convergence 
of the cost to the Cameron--Martin norm for horizontal curves.

\begin{lemma}\label{lem:lifted-CM-path}
Let $h \in \cH$, and let $\bh = \Psi(h)$ denote its lift to $\bOmega_\bbG$. Then, for the family of cost functions $(C_n)_{n \in \N}$ defined in \eqref{eq:defCostDisc}, we have $C_n(0,\bh)\leq \|h\|_\cH$ 
and $\lim_{n \to \infty} C_n(0, \bh) = \|h\|_\cH$.
\end{lemma}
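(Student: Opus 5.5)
We prove the two claims separately: an upper bound $C_n(0,\bh)\le\|h\|_\cH$ for every $n$, obtained by estimating each dyadic increment through the length of the horizontal curve $\bh$, and a matching lower bound in the limit, obtained by projecting to the first level and invoking \Cref{lem:euclidean-liminf}.

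\emph{Upper bound.} Since $0$ is the constant path at the identity, all its increments are $\boldsymbol{e}$. By left-invariance,
\[
C_n^2(0,\bh)=2^n\sum_{k=1}^{2^n} d_\cc^2(\boldsymbol{e},\bh_{t^n_{k-1},t^n_k})=2^n\sum_{k=1}^{2^n} d_\cc^2(\bh_{t^n_{k-1}},\bh_{t^n_k}).
\]
The path $\bh=\Psi(h)$ is horizontal with $|\dot\bh_r|=|\dot h_r|$, by \eqref{eq:proj1} and the orthonormality of the $V_i$. The restriction of $\bh$ to $[t^n_{k-1},t^n_k]$ is therefore an admissible competitor in \eqref{eq:dcc}, after a linear reparametrisation to $[0,1]$, which leaves the length unchanged. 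This gives
\[
d_\cc(\bh_{t^n_{k-1}},\bh_{t^n_k})\le\int_{t^n_{k-1}}^{t^n_k}|\dot h_r|\di r .
\]
By Cauchy--Schwarz, the square of the right-hand side is at most $2^{-n}\int_{t^n_{k-1}}^{t^n_k}|\dot h_r|^2\di r$. Summing over $k$ yields $C_n^2(0,\bh)\le\|h\|_\cH^2$ for every $n$, and in particular $\limsup_{n\to\infty} C_n(0,\bh)\le\|h\|_\cH$.

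\emph{Lower bound.} By \eqref{eq:euclidean-cc-ineq} in \Cref{lem:euclidean-liminf}(iii), $C_n(0,\bh)\ge c_n(\pi_1 0,\pi_1\bh)=c_n(0,h)$. By \Cref{lem:euclidean-liminf}(i), $c_n(0,h)\to c_\cH(0,h)=\|h\|_\cH$, since $h\in\cH$. Hence $\liminf_{n\to\infty} C_n(0,\bh)\ge\|h\|_\cH$. Combined with the upper bound, this gives $\lim_{n\to\infty} C_n(0,\bh)=\|h\|_\cH$.

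The only point needing care is the justification that $|\dot\bh_r|=|\dot h_r|$ for the left-invariant metric, so that the restricted lifted curve has length exactly $\int|\dot h|$. This follows from \eqref{eq:proj1} together with the orthonormality of the frame $(V_1,\dots,V_{d_1})$ at every point, so I expect no genuine obstacle.
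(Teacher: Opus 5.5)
Your proof is correct. The upper bound is the paper's argument verbatim; the lower bound takes a different and shorter route.

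\emph{The paper's lower bound} stays intrinsic to $d_\cc$. It sets $g^n_t = 2^n d_\cc(\bh_{t^n_{i-1}},\bh_{t^n_i})$ on each dyadic interval and observes that $(g^n)$ is bounded in $L^2([0,1])$, so it can extract a weak limit $g$. It then shows $d_\cc(\bh_r,\bh_s)\le\int_r^s g_t\di t$. Minimality of the metric derivative, together with the identification $|\dot\bh_t|_{d_\cc}=|\dot h_t|$ from \Cref{rem:metric-derivative}, gives $g\ge|\dot h|$ almost everywhere. Weak lower semicontinuity of the $L^2$ norm finishes.

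\emph{Your lower bound} pushes everything down to the first level instead. Since $d_\cc(\bx,\ol\bx)\ge|\pi_1\bx-\pi_1\ol\bx|$ and $\pi_1\bh=h$, \Cref{lem:euclidean-liminf}(iii) gives $C_n(0,\bh)\ge c_n(0,h)$. The Euclidean dyadic convergence $c_n(0,h)\to\|h\|_\cH$ from \Cref{lem:euclidean-liminf}(i) then closes the argument. The paper itself uses exactly this projection inequality in Case 1a of \Cref{prop:liminf}.

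\emph{What each route buys.} Yours reuses results already proved and avoids both Montgomery's identification of the metric derivative and the weak-compactness step. However, it depends on the Carnot-specific fact that $\pi_1$ is $1$-Lipschitz into Euclidean space and preserves the speed of horizontal curves exactly. The paper's argument is the general metric-space statement that discretised $2$-energies of an absolutely continuous curve recover $\int|\dot\gamma|_d^2$. It would therefore still work where no such speed-preserving projection is available. An example is the Riemannian approximation $d_\varepsilon$ of \Cref{sec:riemann}: there, projecting onto the first two coordinates loses the contribution of $h^3$ and would only give a strictly weaker lower bound.
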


\begin{proof}
Let $h\in\cH$ and let $\bh = \Psi(h)$.
The curve $t \mapsto \bh_t\in\bbG$ is horizontal and therefore, for every $0\leq s<t\leq 1$, we have
\[
d_\cc(\bh_s, \bh_t) \leq \int_s^t |\dot h_r| \di r.
\]
Applying Hölder's inequality, we obtain the estimate
\[
C_n^2(0,\bh)= 2^n\sum_{i=1}^{2^n} d_{\cc}(\bh_{t_i^n},\bh_{t_{i-1}^n})^2
\leq \sum_{i=1}^{2^n} \int_{t_{i-1}^n}^{t_i^n}|\dot h_r|^2 \di r
= \|h\|_{\cH}^2.
\]
Taking the $\limsup$ on the left-hand side 
gives $\limsup_{n \to \infty} C_n(0, \bh) \leq \|h\|_\cH$. 

To show the lower bound, define a piecewise constant function $g^n \colon [0, 1] \to [0, \infty)$, for each $n \in \N$, by
$g^n_t = 2^n d_\cc(\bh_{t_i^n},\bh_{t_{i-1}^n})$
for $t\in [t_{i-1}^n,t_i^n)$, $i \in \{1,\dotsc, n\}$.
Note that the sequence $(g^n)$ is uniformly bounded in $L^2([0,1])$. 
Hence, we can extract a weakly converging subsequence such that $g^{n_k} \to g$
in $L^2([0,1])$. For given $0\leq r<s\leq 1$, we can find indices $i_n,\, j_n\in\{1,\ldots, 2^n\}$ such that,
for $r_n = t_{i_n}^n$ and $s_n = t_{j_n}^n$,
\[
0\leq r_n \leq r < s\leq s_n\leq 1,\quad\text{and}\quad
\lim_{n\to\infty} r_n = r,\quad
\lim_{n\to\infty} s_n = s.
\]
By the triangle inequality and the continuity of $t \mapsto \bh_t$, there is a sequence $(\varepsilon_n) \subset \R$ such that $\varepsilon_n \to 0$ and
\[
d_\cc (\bh_r,\bh_s) 
\leq \sum_{i=i_n}^{j_n}
d_\cc(\bh_{t_i^n},\bh_{t_{i-1}^n}) +d_\cc(\bh_r,\bh_{r_n})  +d_\cc(\bh_s,\bh_{s_n}) 
= \int_{r_n}
^{s_n} g^n_t \di t + \epsilon_n.
\]
Passing to the limit as $n\to\infty$, we infer that $d_\cc (\bh_r,\bh_s)\leq \int_r^s g_t \di t$.
Now, by the minimality of the metric derivative (see \Cref{rem:metric-derivative}), 
we obtain $g_t \geq |\dot h_t|$ for almost every $t\in(0,1)$.
Finally, since $C_n^2 (0,\bh) = 2^n\sum_{i=1}^{2^n} d_{\cc}^2(\bh_{t_i^n},\bh_{t_{i-1}^n})$, lower-semicontinuity of the norm implies that
\[
\liminf_{n\to \infty}C_n^2(0, \bh) = \liminf_{n\to \infty}\int_0^1 (g^n_t)^2\di t \geq \int_0^1 g_t^2 \di t \geq \|h\|^2_\cH,
\] 
which finishes the proof.
\end{proof}

The next lemma shows that the cost $C_n$ blows up along sequences $\bomega^n,\wt \bomega^n$ that converge to limits $\bomega,\wt \bomega$
whose difference $\bomega^{-1}\wt\bomega$ is a purely vertical process; i.e.\ $t \mapsto (\bomega^{-1}\wt\bomega)_t = (0, \theta_t)$.

\begin{lemma}\label{lem:vertical-cost}
	Let $\bomega, \wt \bomega \in \bOmega_\bbG$, and let $(\bomega^n), (\wt \bomega^n) \subset \bOmega_\bbG$ be sequences such that $\lim_{n \to \infty}(\bomega^n, \wt \bomega^n) = (\bomega, \wt \bomega)$. 
    Suppose that there exists a non-zero $\btheta \in \bOmega_\bbG$ such that $\btheta_t = (0, \theta_t)$ and $\wt \bomega_t = \bomega_t \btheta_t$ for all $t \in [0, T]$.
    Then  $\lim_{n \to \infty}C_n(\bomega^n, \wt \bomega^n) = \infty$.
\end{lemma}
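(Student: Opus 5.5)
We aim to show that along any converging sequence the discretised costs must pay, on each dyadic interval, a vertical displacement of size comparable to the increment of $\theta$. Since $d_\cc$ penalises vertical displacements only like a square root, summing these contributions over $2^n$ intervals with the prefactor $2^n$ forces the costs to blow up.

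\textbf{Step 1: lower bound for $d_\cc$ on each interval.} Write $\bomega^{-1}\wt\bomega = \btheta$ with $\btheta_t=(0,\theta_t)$. Because $(0,\theta)$ is central in a step-$2$ group, the increments satisfy
\[
\wt\bomega_{s,t} = \btheta_s^{-1}\bomega_s^{-1}\bomega_t\btheta_t = \bomega_{s,t}\,(0,\theta_t-\theta_s).
\]
By left-invariance, $d_\cc(\bomega_{s,t},\wt\bomega_{s,t}) = d_\cc(0,(0,\theta_{s,t}))$. By \eqref{eq:gauge-distance} this is at least $\kappa^{-1}|\theta_{s,t}|^{1/2}$.

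\textbf{Step 2: passing to the approximating sequence.} For $\bomega^n,\wt\bomega^n$, set $\delta_n \coloneqq d_\infty(\bomega^n,\bomega)+d_\infty(\wt\bomega^n,\wt\bomega)\to0$. We need to control
\[
d_\cc(\bomega^n_{s,t},\bomega_{s,t}) = d_\cc\bigl((\bomega^n_s)^{-1}\bomega^n_t,\ \bomega_s^{-1}\bomega_t\bigr)
\]
by a quantity that vanishes uniformly in $s,t$. This follows from uniform continuity of the group operations on the compact set containing all these paths (they are uniformly convergent, hence uniformly bounded). Set
\[
\eta_n \coloneqq \sup_{s\le t}\Bigl[d_\cc(\bomega^n_{s,t},\bomega_{s,t})+d_\cc(\wt\bomega^n_{s,t},\wt\bomega_{s,t})\Bigr] \to 0.
\]
The triangle inequality then gives, on each dyadic interval $k$,
\[
d_\cc(\bomega^n_{s,t},\wt\bomega^n_{s,t}) \ge \bigl(\kappa^{-1}|\theta_{s,t}|^{1/2}-\eta_n\bigr)_+ .
\]

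\textbf{Step 3: summation, and the main obstacle.} We have
\[
C_n^2(\bomega^n,\wt\bomega^n) \ge 2^n\sum_k \bigl(\kappa^{-1}|\theta_{t^n_{k-1},t^n_k}|^{1/2}-\eta_n\bigr)_+^2 .
\]
The difficulty is that $\eta_n$ need not be small compared with the individual increments $|\theta_{t^n_{k-1},t^n_k}|^{1/2}$, which tend to zero. I would instead use coarser blocks. Since $\theta\neq0$ and $\theta_0=0$, there are $0\le a<b\le1$ with $|\theta_b-\theta_a| = c_0>0$. Fix a dyadic level $m$ and indices so that the block $[t^m_i,t^m_j]$ approximates $[a,b]$ with $|\theta_{t^m_i,t^m_j}|\ge c_0/2$.

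For $n\ge m$, split this block into $N=(j-i)2^{n-m}$ level-$n$ intervals. The key estimate to establish is a convexity bound: for fixed endpoints, the sum of $d_\cc^2$ over a chain of $N$ intervals whose vertical discrepancy totals $c_0/2$ is at least of order $c_0/N$. Heuristically, by Cauchy--Schwarz
\[
\Bigl(\sum d\Bigr)^2 \le N\sum d^2,
\]
combined with a lower bound on $\sum d$ from the total discrepancy. Here the errors $\eta_n$ enter once per interval, so the discrepancy on the approximating side must be tracked exactly rather than through the triangle inequality on each interval. To do this I would work directly with $\bomega^n,\wt\bomega^n$: define
\[
\bz_k \coloneqq (\bomega^n_{t_{k-1},t_k})^{-1}\wt\bomega^n_{t_{k-1},t_k},
\]
and expand the product of the $\bz_k$ over the block, using the BCH formula. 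The vertical component of the resulting block discrepancy equals $\sum_k \bz_k^{(2)}$ plus cross terms of the form $\SCO(\text{increment}\otimes\bz^{(1)})$. These cross terms are controlled by $\sup_k|\bz^{(1)}_k|$ times the first-level oscillation. Using $d_\cc\ge\kappa^{-1}d_g$, we get
\[
\sum d_\cc(\bz_k)\gtrsim \sum|\bz_k^{(2)}|^{1/2}\ge \Bigl(\sum|\bz_k^{(2)}|\Bigr)^{1/2}.
\]
On the other hand, $2^n\sum_k d_\cc^2(\bz_k)$ being bounded would force $\sum_k|\bz^{(1)}_k|$ to be bounded and the vertical sums to vanish, since
\[
\sum d_\cc^2\ge\sum |\bz^{(2)}_k|/\kappa^2 .
\]
This contradicts the block discrepancy converging to $(0,\theta_{t^m_i,t^m_j})\neq0$. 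Making this contradiction quantitative, by arguing along a subsequence with bounded cost, is the step I expect to require the most care.
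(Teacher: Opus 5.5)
Your final argument has the right skeleton, and its core estimate is the paper's. From $d_\cc\ge\kappa^{-1}d_g$ you get $d_\cc^2(\boldsymbol z_k)\ge\kappa^{-2}|z^{(2)}_k|$, so bounded cost forces $\sum_k|z^{(2)}_k|\le\kappa^2 2^{-n}C_n^2\to0$; this is the paper's statement that $\wh\theta^n\to0$ in $W^{1,1}$. Your intermediate ``convexity bound'' of order $c_0/N$, obtained via Cauchy--Schwarz, would not suffice: it only gives $C_n^2\ge c\,2^n/N=c\,2^m/(j-i)$, which stays bounded in $n$. It is the linear bound that produces blow-up.

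The genuine gap is the cross-term step that links $\sum_k z^{(2)}_k$ to $\theta$. Set $\omega^n\coloneqq\pi_1\bomega^n$, $h^n\coloneqq\pi_1\wt\bomega^n-\pi_1\bomega^n$ and $z_b\coloneqq z^{(1)}_b=h^n_{t_{b-1},t_b}$. The vertical part of the block discrepancy over $[t_i,t_j]$ is
\[
\sum_{b} z^{(2)}_b+\tfrac12\SCO\sum_{a<b}z_a\otimes z_b+\SCO\sum_{b}z_b\otimes\bigl(\omega^n_{t_j}-\omega^n_{t_b}\bigr).
\]
The last sum is \emph{not} controlled by $\sup_b|z_b|$ times the oscillation of $\omega^n$. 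A termwise estimate gives one of two things:
\begin{itemize}
\item $\sup_b|z_b|\sum_b|\omega^n_{t_j}-\omega^n_{t_b}|$, which has about $2^n$ summands and can be of order $2^{n/2}$;
\item $\mathrm{osc}(\omega^n)\sum_b|z_b|$, which is bounded but does not vanish for a fixed block.
\end{itemize}
Shrinking the block does not rescue this when $\omega$ is no better than $\tfrac12$-H\"older. On a block of length $\ell$ you then get $\mathrm{osc}\cdot\ell^{1/2}$, which is of the same order $\ell$ as $|\theta_{\text{block}}|$ for Lipschitz $\theta$. So, as written, the contradiction does not go through.

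What is missing is a cancellation argument of Riemann--Stieltjes type.
\begin{itemize}
\item Since $\pi_1\wt\bomega=\pi_1\bomega$, you have $\|h^n\|_\infty\to0$.
\item Bounded cost gives $\sum_b|z_b|\le C_n(\bomega^n,\wt\bomega^n)\le C$, using $d_\cc(\bx,\by)\ge|\pi_1\bx-\pi_1\by|$ and Cauchy--Schwarz.
\item The middle sum is then at most $2\|h^n\|_\infty C\to0$.
\item For the last sum, use equicontinuity of the uniformly convergent sequence $\omega^n$. Freeze $\omega^n_{t_j}-\omega^n_{\cdot}$ on the $M$ cells of a fixed coarse dyadic partition on which its oscillation is below $\varepsilon$ (for $n$ large). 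On each cell the $z_b$ telescope to an increment of $h^n$. So the sum is at most a constant multiple of $M\sup_n\|\omega^n\|_\infty\|h^n\|_\infty+\varepsilon C$, whose $\limsup$ is at most a constant times $\varepsilon C$.
\end{itemize}
Hence the vertical discrepancy tends to $0$ along a bounded-cost subsequence. This contradicts its convergence to $\theta_{t_i,t_j}\neq0$; a single dyadic time $t$ with $\theta_t\neq0$ suffices, so the coarse blocks are unnecessary. The paper's proof likewise passes directly from $\wh\theta^n\to0$ to $\theta\equiv0$ and leaves this identification implicit, so this step is where the real work lies.
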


\begin{proof}
	Suppose for contradiction that $C_n(\bomega^n, \wt \bomega^n)$ is bounded uniformly in $n$. 
    We use the equivalence of the gauge distance $d_g$ and the Carnot--Carath\'eodory distance $d_\cc$ from \eqref{eq:gauge-distance} to obtain the lower bound
    \[
    C_n(\bomega^n, \wt \bomega^n)^2  = 2^n\sum_{k=1}^{2^n} d_\cc\big(\bomega^n_{t_{k-1}^n,t_k^n}, \wt \bomega^n_{t_{k-1}^n,t_k^n}\big)^2 \geq
    \frac{2^n}{\kappa} \sum_{k=1}^{2^n}
      d_g\big(\bomega^n_{t_{k-1}^n,t_k^n}, \wt \bomega^n_{t_{k-1}^n,t_k^n}\big)^2\geq
      \frac{2^n}{\kappa} \sum_{k=1}^{2^n}
      \big|\theta^n_{t_{k-1}^n,t_k^n}\big|,
    \]
    where $\theta^n_{s,t} = \pi_2((\bomega_{s,t}^n)^{-1}\wt \bomega_{s,t}^n)$ are the increments of the vertical process. 
    Let $\wh{ \theta}^n_t$ denote the piecewise affine interpolant associated with the increments $\theta^n_{t_{k-1}^n,t_k^n}$ with $\wh\theta^n_0 = 0$.
    The above estimate gives the bound $C_n(\bomega^n, \wt \bomega^n)^2\geq \frac{2^n}{\kappa}\|{\wh \theta^n}\|_{W^{1,1}}$, 
    where the left-hand side is uniformly bounded with respect to $n$ by assumption.
    Thus ${\wh \theta^n}\to 0$ in $W^{1,1}_0([0,1],\R^{d_2})$ and so $\theta \equiv 0$ as $W^{1,1}_0([0,1],\R^{d_2})\hookrightarrow C_0([0,1], \R^{d_2})$, giving a contradiction. We conclude that $\lim_{n \to \infty}C_n(\bomega^n, \wt \bomega^n) = \infty$.
\end{proof}

Now we prove condition (ii) in the definition of $\Gamma$-convergence for $C_n$, i.e.\ the existence of a recovery sequence. In fact, we show a stronger version; see Remark~\ref{rem:recoveryMap}.

\begin{proposition}\label{prop:limsup}
	Let $\bomega, \wt \bomega \in \bOmega_\bbG$ and let $(\bomega^n)\subset \bOmega_\bbG$ be a sequence such that $\bomega^n \to \bomega$. Then there exists a sequence $(\wt\bomega^n)\subset\bOmega_\bbG$ such that $\wt\bomega^n\to \wt\bomega$ and
	\begin{equation}
    \label{eq:limsupCost}
		\limsup_{n \to \infty} C_n(\bomega^n, \wt \bomega^n) \leq C_\cH(\bomega, \wt \bomega).
	\end{equation}
\end{proposition}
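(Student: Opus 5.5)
The plan is to separate the trivial case and, in the main case, to build $\wt\bomega^n$ from $\bomega^n$ by right-multiplying its increments on each dyadic interval by the increments of the lifted Cameron--Martin path. Pointwise blow-up (\cref{prop:blow-up}) shows that the naive choice $\wt\bomega^n = T_h\bomega^n$ fails, so it cannot be used. If $C_\cH(\bomega,\wt\bomega)=\infty$ there is nothing to prove: we take $\wt\bomega^n\coloneqq\wt\bomega$ for all $n$. Otherwise $\wt\bomega = T_h\bomega$ for some $h\in\cH$. We set $\bh\coloneqq\Psi(h)$ and define $\wt\bomega^n$ recursively on the dyadic grid by $\wt\bomega^n_0=0$ and
\begin{equation*}
\wt\bomega^n_t \coloneqq \wt\bomega^n_{t^n_{k-1}}\,(\bomega^n)_{t^n_{k-1},t}\,\bh_{t^n_{k-1},t},\qquad t\in[t^n_{k-1},t^n_k].
\end{equation*}
This is a continuous $\bbG$-valued path started at the origin, and its dyadic increments are $\wt\bomega^n_{t^n_{k-1},t^n_k}=\bomega^n_{t^n_{k-1},t^n_k}\bh_{t^n_{k-1},t^n_k}$.

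For the cost bound, left-invariance of $d_\cc$ gives $d_\cc(\bx,\bx\by)=d_\cc(0,\by)$. Applying this on each dyadic interval yields $C_n(\bomega^n,\wt\bomega^n)=C_n(0,\bh)$. By \cref{lem:lifted-CM-path}, this is at most $\|h\|_\cH=C_\cH(\bomega,\wt\bomega)$ for every $n$, which gives \eqref{eq:limsupCost} immediately. In particular, the $\limsup$ bound comes for free from the construction.

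The main work, and the step I expect to be the obstacle, is to show $\wt\bomega^n\to T_h\bomega$ uniformly. By \cref{rem:commutativity-error}, which holds for any $\bX\in\bOmega_\bbG$, we have $(T_h\bomega^n)_{s,t}=\bomega^n_{s,t}\bh_{s,t}\btheta^n_{s,t}$ with the vertical correction $\btheta^n_{s,t}=(0,\int_s^t\SCO h_{s,r}\otimes\D\omega^n_r)$, where $\omega^n=\pi_1\bomega^n$. Vertical elements are central in a step-$2$ group. Chaining the increments therefore gives
\begin{equation*}
(T_h\bomega^n)_t=\wt\bomega^n_t\,\bigl(0,\Theta^n_t\bigr),\qquad \Theta^n_t=\sum_{k}\int_{t^n_{k-1}}^{t^n_k\wedge t}\SCO h_{t^n_{k-1},r}\otimes \D\omega^n_r,
\end{equation*}
where the sum runs over $k$ with $t^n_{k-1}\le t$. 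I would bound $\Theta^n$ by integrating by parts on each interval: $\int_s^u h_{s,r}\otimes\D\omega^n_r=h_{s,u}\otimes\omega^n_u-\int_s^u \D h_r\otimes\omega^n_r=\int_s^u \D h_r\otimes\omega^n_{r,u}$, using $h_{s,s}=0$. This makes the integral meaningful for merely continuous $\omega^n$ and gives $|\Theta^n_t|\lesssim \mathrm{osc}(\omega^n,2^{-n})\int_0^1|\dot h_r|\di r$, where $\mathrm{osc}(\omega^n,\delta)$ is the modulus of continuity of $\omega^n$ at scale $\delta$. Since $\bomega^n\to\bomega$ uniformly, the family $(\omega^n)$ is equicontinuous. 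Hence $\sup_t|\Theta^n_t|\to0$, and by \eqref{eq:gauge-distance} we get $d_\infty(T_h\bomega^n,\wt\bomega^n)\le\kappa\sup_t|\Theta^n_t|^{1/2}\to0$.

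Finally, continuity of $(h,\bX)\mapsto T_h\bX$ from \cref{prop:shift}(iv) gives $T_h\bomega^n\to T_h\bomega=\wt\bomega$. The triangle inequality for $d_\infty$ then yields $\wt\bomega^n\to\wt\bomega$, which completes the construction.
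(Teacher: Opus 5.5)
Your proposal is correct and follows essentially the same route as the paper: your recursively defined path coincides with the paper's $\wt\bomega^n = T_h\bomega^n\,(0,\vartheta^n)^{-1}$, and your $\Theta^n$ is exactly the paper's central correction $\vartheta^n$. As in the paper, the bound $C_n(\bomega^n,\wt\bomega^n)=C_n(0,\bh)\le\|h\|_\cH$ comes from left-invariance and \cref{lem:lifted-CM-path}, and the convergence $\wt\bomega^n\to\wt\bomega$ comes from continuity of the shift together with $\|\vartheta^n\|_\infty\le C\,\mathrm{osc}(\pi_1\bomega^n,2^{-n})\,\|\dot h\|_{L^1}\to 0$.
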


\begin{proof}
We only have to consider the case $\wt \bomega= T_h \bomega$ for $h\in \cH$, since  the right-hand side in \eqref{eq:limsupCost}
is otherwise infinite by definition of $C_\cH$ in \eqref{eq:rp-cost}, and the inequality holds trivially.
Let $\bomega \in \bOmega_\bbG$, $h \in \cH$, and $\wt \bomega = T_h \bomega$. Consider a sequence $(\bomega^n)\subset\bOmega_\bbG$ such that $\bomega^n \to \bomega$ in $\bOmega_\bbG$. For each $n \in \N$, define $\ol\bomega^n:=T_h\bomega^n\in\bOmega_\bbG$.
By the continuity of the shift map from \Cref{prop:shift}, $\ol\bomega^n\to \wt \bomega = T_h \bomega$ in $\bOmega_\bbG$.
As in Remark~\ref{rem:commutativity-error}, we introduce the non-commutativity error
\[
\begin{split}
\btheta^n_{s,t}&:=\bh^{-1}_{s, t} (\bomega_{s,t}^n)^{-1}\ol\bomega_{s,t}^n 
=
\big(0, \theta^n_{s,t}\big),\\
\text{where}\quad \theta^n_{s,t}&= \Big(\int_s^t \SCO h_{s, r} \otimes \D \omega^{n}_r\Big).
\end{split}
\]
We define $\bvtheta^n \in \bOmega_\bbG$ such that its increments satisfy
$(\bvtheta_{t_{k-1}^n}^n)^{-1} \bvtheta_{t_k^n}^n=\btheta^n_{t_{k-1}^n,t_k^n}$.
Indeed, we set
$\bvtheta^n_t = (0,\vartheta^n_t)$, where
\[
	\vartheta^n_0 = 0 \quad \text{and} \quad
	\vartheta^n_t = \vartheta^n_{t_{k-1}^n}
	+ \theta^n_{t_{k-1}^n,t}, \quad \text{for} \quad t\in(t_{k-1}^n,t_k^n],\; k \in \{1,\dotsc,  2^n\}.
\]
We emphasise that, for any $t \in [0, 1]$, $\bvtheta^n_t$
is an element in the centre of the group $\bbG$. In particular, it commutes with every element in 
$\bbG$. Therefore, defining the curve $t \mapsto \wt \bomega^n_t = \ol \bomega^n_t(\bvtheta^n_t)^{-1}\in\bbG$, we obtain that its increments satisfy
\[
\wt\bomega^n_{s,t} = \ol\bomega^n_{s,t} (\bvtheta^n_{s,t})^{-1}
=(\bvtheta_{s,t}^n)^{-1}\ol\bomega_{s,t}^n, \quad 0 \leq s \leq t \leq 1.
\]
Using the left-invariance of $d_\cc$ and the definition of $\bvtheta^n$, 
we find that, for $k \in \{1,\dotsc,  2^n\}$,
\begin{align*}
d_\cc(\bomega^n_{t_{k-1}^n,t_{k}^n},\wt\bomega^n_{t_{k-1}^n,t_{k}^n})
& = d_\cc\big(0,(\bomega^n_{t_{k-1}^n,t_{k}^n})^{-1}\ol\bomega^n_{t_{k-1}^n,t_{k}^n}(\bvtheta^n_{t_{k-1}^n,t_{k}^n})^{-1}\big)\\
& = d_\cc\big(0,\bh_{t_{k-1}^n,t_{k}^n}\btheta^n_{t_{k-1}^n,t_{k}^n}(\btheta^n_{t_{k-1}^n,t_{k}^n})^{-1}\big)
= d_\cc(0,\bh_{t_{k-1}^n,t_{k}^n}).
\end{align*}
\Cref{lem:lifted-CM-path} now allows us to pass to the $\limsup$. More precisely, we have that
\[
\limsup_{n\to \infty} C_n(\bomega^n, \wt \bomega^n)
=
\limsup_{n\to \infty}C_n(0,\bh)\leq C_\cH(\bomega,\wt\bomega).
\]

It remains to show that $\wt \bomega^n \to\wt \bomega$. Due to the convergence $\ol \bomega^n \to \wt\bomega$ it suffices to show that $\bvtheta^n\to 0$ in $\bOmega_\bbG$ or, equivalently, $\vartheta^n\to 0$ in $C([0,1], \R^{d_2})$. Using the definition of $\vartheta^n$, we find a constant $C>0$ such that
\[
\|\vartheta^n\|_\infty
\leq C \sup_{k=1,\ldots, 2^n}
\max_{i<j}\sup_{t\in[t_{k-1}^n,t_k^n]}|\omega_{t}^{n,i}{-}\omega_{t_k^n}^{n,i}| \big\|\dot h^j\big\|_{L^1}.
\]
Since $\bomega^n$ converges uniformly to $\bomega$, the right-hand side vanishes as $n\to \infty$.
\end{proof}

We now prove the lower estimate that is required in condition (i) of the definition of $\Gamma$-convergence.

\begin{proposition}\label{prop:liminf}
	Let $\bomega, \wt \bomega \in \bOmega_\bbG$, and let $(\bomega^n), (\wt \bomega^n) \subset \bOmega_\bbG$ be sequences such that $\lim_{n \to \infty}(\bomega^n, \wt \bomega^n) = (\bomega, \wt \bomega)$.
	Then $\liminf_{n \to \infty} C_n(\bomega^n, \wt \bomega^n) \geq C_\cH(\bomega, \wt \bomega)$.
\end{proposition}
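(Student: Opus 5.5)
We reduce to a bounded subsequence and split according to whether the first-level difference lies in $\cH$. We may assume $I \coloneqq \liminf_{n\to\infty} C_n(\bomega^n,\wt\bomega^n) < \infty$; otherwise there is nothing to prove. Pass to a subsequence $(n_k)$ along which $C_{n_k}(\bomega^{n_k},\wt\bomega^{n_k}) \to I$, so these values are uniformly bounded.

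\emph{First level.} Write $\omega^n = \pi_1\bomega^n$, $\wt\omega^n = \pi_1\wt\bomega^n$ and $h^n = \wt\omega^n - \omega^n$, with limits $\omega,\wt\omega,h$. By \eqref{eq:euclidean-cc-ineq}, $C_n(\bomega^n,\wt\bomega^n) \ge c_n(\omega^n,\wt\omega^n)$. Moreover, $c_n(\omega^n,\wt\omega^n)$ equals the $\cH$-norm of the dyadic piecewise-linear interpolant $\wh h^n$ of $h^n$, and $c_m \le c_n$ for $m \le n$ by the monotonicity in \Cref{lem:euclidean-liminf}(i). Fix $m$ and let $k\to\infty$; uniform convergence $h^{n_k}\to h$ gives $c_m(\omega,\wt\omega) \le I$. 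Letting $m\to\infty$ and using \eqref{eq:mono-conv-euclidean} yields $h\in\cH$ and $\|h\|_\cH \le I$. (Alternatively, $\wh h^{n_k}$ is bounded in $\cH$ and converges uniformly to $h$, so weak lower semicontinuity of the norm gives the same.)

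\emph{Second level.} It remains to show that $\wt\bomega = T_h\bomega$, since then $C_\cH(\bomega,\wt\bomega) = \|h\|_\cH \le I$. Set $\ol\bomega \coloneqq T_h\bomega$. Then $\pi_1\ol\bomega = \omega + h = \wt\omega$, so the paths $\wt\bomega$ and $\ol\bomega$ have the same first level. Define $\btheta_t \coloneqq \ol\bomega_t^{-1}\wt\bomega_t$. By the group law, $\btheta_t = (0,\theta_t)$ is a central, purely vertical element. Suppose for contradiction that $\btheta \ne 0$.

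Consider the sequence $\ol\bomega^n \coloneqq T_h\bomega^n$, which converges to $\ol\bomega$ by \Cref{prop:shift}(iv). We aim to apply \Cref{lem:vertical-cost} to the pair $(\ol\bomega^n,\wt\bomega^n) \to (\ol\bomega,\wt\bomega)$, which satisfies its hypothesis $\wt\bomega_t = \ol\bomega_t\btheta_t$. This would give $C_n(\ol\bomega^n,\wt\bomega^n)\to\infty$. To transfer this to the original pair, use the triangle inequality for $d_{\cc,n}$:
\[
C_n(\ol\bomega^n,\wt\bomega^n) \le C_n(\bomega^n,\wt\bomega^n) + C_n(\bomega^n,\ol\bomega^n).
\]
The main obstacle is that $C_n(\bomega^n, T_h\bomega^n)$ can itself blow up (\Cref{prop:blow-up}), so this estimate fails as stated.

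To get around this, we replace $\ol\bomega^n$ by the recovery sequence constructed in the proof of \Cref{prop:limsup}, namely $\ol\bomega^n (\bvtheta^n)^{-1}$. Along it, $C_n(\bomega^n,\ol\bomega^n(\bvtheta^n)^{-1}) \le \|h\|_\cH$, and it still converges to $T_h\bomega$ because $\bvtheta^n\to0$. Applying \Cref{lem:vertical-cost} to the pair $(\ol\bomega^n(\bvtheta^n)^{-1},\wt\bomega^n)$ then gives
\[
\infty = \lim_{n\to\infty} C_n\big(\ol\bomega^n(\bvtheta^n)^{-1},\wt\bomega^n\big) \le \liminf_{k\to\infty}\Big( C_{n_k}(\bomega^{n_k},\wt\bomega^{n_k}) + \|h\|_\cH \Big) = I + \|h\|_\cH,
\]
which contradicts $I<\infty$. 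Hence $\btheta\equiv0$, i.e.\ $\wt\bomega = T_h\bomega$, and $C_\cH(\bomega,\wt\bomega) = \|h\|_\cH \le I$.

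The step needing the most care is checking that \Cref{lem:vertical-cost} applies along the subsequence. Its proof only uses a uniform bound on the costs, so this should be routine. One must also verify that the recovery construction is valid for the given sequence $\bomega^n$, which holds because \Cref{prop:limsup} allows an arbitrary sequence $\bomega^n\to\bomega$.
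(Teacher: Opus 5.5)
Your proposal is correct and follows essentially the paper's proof. The first level is bounded via \eqref{eq:euclidean-cc-ineq}; your monotonicity-of-$c_m$ argument replaces the paper's weak-compactness step and settles $h\notin\cH$ without a separate case. For the second level you argue by contradiction with the recovery sequence of \Cref{prop:limsup}, \Cref{lem:vertical-cost} and the triangle inequality for $C_n$, exactly as in the paper's Case~2.
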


\begin{proof}
Consider a pair of curves $(\bomega,\wt{\bomega}) \in \bOmega_\bbG
\times\bOmega_\bbG$ and a pair of sequences $(\bomega^n, \wt \bomega^n) \subset \bOmega_\bbG \times \bOmega_\bbG$ such that $\lim_{n \to \infty}(\bomega^n, \wt \bomega^n) = (\bomega, \wt \bomega)$.
We will consider three cases.

\emph{Case 1a.}
First, we consider the case that $\wt\bomega = T_h\bomega$ for some $h\in \cH$.
We may assume that $I \coloneqq \liminf_{n\to \infty}C_n(\bomega^n, \wt \bomega^n)<\infty$, since otherwise the inequality holds trivially.
Let $h^n = \pi_1\wt \bomega^n - \pi_1\bomega^n$
so that $h^n\to h$
in $C([0,1], \R^{d_1})$.
We now apply \Cref{lem:euclidean-liminf} (iii), to see that
\begin{align}\label{eq:hn-ineq}
    \liminf_{n\to\infty}
    C_n(\wt\bomega^{n},\bomega^n)^2\geq 
    \liminf_{n\to\infty}
    2^n \sum_{k=1}^{2^n}|h_{t_k^n}^n-h_{t_{k-1}^n}^n|^2
    = \liminf_{n \to \infty}\|\wh{h}^n\|_\cH^2,
\end{align}
where $\wh{h}^n$ is the piecewise affine interpolant of $h^n$.
Since $I\in [0,\infty)$, we can assume that $\wh h^n$
is bounded in $\cH$ and is weakly converging to a limit $\overline{h}\in \cH$, which we see is equal to $h$.
By weak lower semicontinuity of the $L^2$ norm we obtain $\liminf_{n\to\infty}
C_n(\wt\bomega^{n},\bomega^n)^2\geq\|h\|_\cH^2$.

\emph{Case 1b.}
Now suppose that $\wt \bomega = T_h \bomega$, where $h = \pi_1 \wt \bomega - \pi_1 \bomega \notin \cH$. Then $C_\cH(\bomega, \wt \bomega) = \infty$. Supposing again that $I\coloneqq \liminf_{n\to \infty}C_n(\bomega^n, \wt \bomega^n) < \infty$, following the same argument as above leads to $h \in \cH$, which is a contradiction. Thus, $ \liminf_{n\to\infty} C_n(\wt\bomega^{n},\bomega^n)^2 = \infty$.

\emph{Case 2.} 
We now assume that $h \coloneqq \pi_1\wt \bomega - \pi_1\bomega\in\cH$ but $\wt \bomega\neq T_h\bomega$; i.e.\ 
$\wt \bomega$ is not a shift of $\bomega$.
We show that $C_{n}(\wt\bomega^n,\bomega^n) \to\infty$. Define $\wh \bomega = T_h\bomega$ such that, by assumption, $\wh\bomega\neq \wt\bomega$ but $\pi_1\wh \bomega = \pi_1\wt\bomega$. Therefore, there exists $\theta\in C([0,1], \R^{d_2})$ such that $\theta \not \equiv 0 $ and $\wt\bomega = \wh\bomega \btheta$ with $\btheta = (0,\theta)$.
By \Cref{prop:limsup}, we  find a sequence $\wh{\bomega}^{n}$ such that $\wh{\bomega}^{n}\to \wh{\bomega}$ and 
\begin{equation}
\label{eq:bou}
\limsup_{n\to\infty}C_{n}(\bomega^n,\wh{\bomega}^{n}) \leq C_{\cH}(\bomega,\wh\bomega)=\|h\|_\cH<\infty.
\end{equation}
Now, by the triangle inequality and the estimate $(a+b)^2\leq 2a^{2}+2b^{2}$, for any $a, b \in \R$, we see that
\[
C_{n}(\wh{\bomega}^{n},\wt{\bomega}^{n})^2\leq
2 C_{n}(\bomega^{n},\wh{\bomega}^{n})^2
+
2 C_{n}(\wt{\bomega}^{n},\bomega^{n})^2.
\]
By \eqref{eq:bou}, the first term on the right-hand side is bounded by $2\|h\|_\cH^2$. By \Cref{lem:vertical-cost}, we also have $\lim_{n\to \infty}C_{n}(\wh{\bomega}^{n},\wt{\bomega}^{n})^2=\infty$. Thus we conclude that $\lim_{n \to \infty} C_n(\wt \bomega^n, \bomega^n) = \infty$.
\end{proof}

Combining \cref{prop:limsup,prop:liminf}, we deduce the following $\Gamma$-convergence.
\begin{corollary}\label{cor:gamma-convergence}
	On $\bOmega_\bbG \times \bOmega_\bbG$ equipped with the uniform topology, we have the $\Gamma$-convergence $C_n \xrightarrow{\Gamma} C_\cH$.
\end{corollary}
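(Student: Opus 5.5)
The corollary follows by checking the two defining conditions of $\Gamma$-convergence on the metric space $E = \bOmega_\bbG \times \bOmega_\bbG$, equipped with the product of the uniform metrics $d_\infty$, for $F_n = C_n$ and $F_\infty = C_\cH$. Since convergence in $E$ is exactly convergence of both components in $(\bOmega_\bbG, d_\infty)$, each condition reduces to a statement already established for pairs of sequences in $\bOmega_\bbG$.

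For the liminf inequality (condition (i)), I take an arbitrary sequence $(\bomega^n, \wt\bomega^n) \to (\bomega, \wt\bomega)$ in $E$. \Cref{prop:liminf} then gives $C_\cH(\bomega, \wt\bomega) \le \liminf_{n} C_n(\bomega^n, \wt\bomega^n)$ directly. Its case distinction is exhaustive: either $\pi_1\wt\bomega - \pi_1\bomega \notin \cH$, or it lies in $\cH$ and $\wt\bomega$ is or is not the corresponding shift. So nothing further is needed here.

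For the recovery sequence (condition (ii)), I fix $(\bomega, \wt\bomega) \in E$ and apply \Cref{prop:limsup} with the constant sequence $\bomega^n \coloneqq \bomega$. This produces $\wt\bomega^n \to \wt\bomega$ with $\limsup_n C_n(\bomega, \wt\bomega^n) \le C_\cH(\bomega, \wt\bomega)$. The pair sequence $(\bomega, \wt\bomega^n)$ then converges to $(\bomega, \wt\bomega)$ in $E$ and recovers the energy. If $C_\cH(\bomega, \wt\bomega) = \infty$, any sequence works, for instance the constant one.

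There is no real obstacle, since all the analytic work (the pointwise upper bound along lifted Cameron--Martin paths, the vertical blow-up, and the centre-valued correction $\bvtheta^n$) is contained in \Cref{lem:lifted-CM-path,lem:vertical-cost,prop:limsup,prop:liminf}. The only point to verify is that the topology used in those propositions is the uniform one in each coordinate, which coincides with the product topology on $E$. I would also remark that \Cref{prop:limsup} is in fact stronger than what is needed: the first coordinate of the recovery sequence may be any prescribed sequence converging to $\bomega$.
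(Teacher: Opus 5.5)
Your proposal is correct and follows the paper's proof. The liminf inequality is exactly \Cref{prop:liminf}, and the recovery sequence comes from \Cref{prop:limsup} with the constant first coordinate, as in \Cref{rem:recoveryMap}. Your exhaustiveness remark is also right in substance: the Case 1b argument only uses $\pi_1\wt\bomega^n - \pi_1\bomega^n \to \pi_1\wt\bomega - \pi_1\bomega$ together with \Cref{lem:euclidean-liminf}(iii), not that $\wt\bomega$ is a shift of $\bomega$.
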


\begin{proof}The liminf inequality follows directly from Proposition~\ref{prop:liminf}. The limsup inequality in this setting reads: For every pair $(\bomega,\wt \bomega)\in\bOmega_\bbG\times \bOmega_\bbG$,
we can find a sequence $(\bomega^n,\wt\bomega^n)$ converging to $(\bomega,\wt\bomega)$ such that 
\[
\limsup_{n\to \infty} C_n(\bomega^n,\wt\bomega^n)\leq 
C_n(\bomega,\wt\bomega).
\]
Proposition~\ref{prop:limsup} tells us that in fact we can take any sequence $\bomega^n$ converging to $\bomega$
and the sequence $\wt\bomega^n$ constructed via adding a suitable perturbation.
\end{proof}

\begin{remark}\label{rem:recoveryMap}
Let us note that Proposition~\ref{prop:limsup} is stronger than the standard $\limsup$ condition in $\Gamma$-convergence. In particular, we can choose the constant sequence $\bomega^n=\bomega$ such that the recovery sequence is obtained via a map $\Phi^n(\bomega,\wt\bomega) = (\bomega, \wt\bomega^n)$.
We will use this map $\Phi^n$ to construct sequences of transport plans $\wt\blambda^n$ that are recovery sequences for the family of  optimal transport problems associated with $C_n$; see Proposition \ref{prop:limsupTransProb} below.
\end{remark}

Having shown the $\Gamma$-convergence of the cost functions $C_n$, we can now deduce the $\Gamma$-convergence of the associated transport problems. For $n\in\N$, define the family of transport functionals $\mathrm{I}_n\colon\Pc(\bOmega_\bbG{\times} \bOmega_\bbG)\to [0,\infty]$ and $\mathrm{I}_\infty\colon\Pc(\bOmega_\bbG{\times} \bOmega_\bbG)\to [0,\infty]$ via
\[
\mathrm{I}_n(\blambda) = \int_{\bOmega_\bbG\times \bOmega_\bbG} C_n^2\di\blambda\quad\text{and}
\quad\mathrm{I}_\infty(\blambda) = \int_{\bOmega_\bbG\times \bOmega_\bbG} C_\cH^2\di\blambda, \quad \blambda \in \Pc(\bOmega_\bbG \times \bOmega_\bbG).
\]

\begin{proposition}
\label{prop:liminfTransProb}
    Let $(\blambda^n) \subset \Pc(\bOmega_\bbG\times \bOmega_\bbG)$ be a sequence of probability measures such that $\blambda^n\rightharpoonup\blambda$ in $\Pc(\bOmega_\bbG\times \bOmega_\bbG)$. Then
	\begin{equation*}
        \liminf_{n\to\infty} \mathrm{I}_n(\blambda^n) \geq  \mathrm{I}_\infty(\blambda).
    \end{equation*}
\end{proposition}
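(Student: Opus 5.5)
The plan is the standard lower-semicontinuity argument for $\Gamma$-converging integrands under weak convergence of measures, combined with the pointwise $\liminf$ bound of \Cref{prop:liminf}. We may assume that $\liminf_{n}\mathrm{I}_n(\blambda^n)<\infty$; otherwise there is nothing to prove. We then pass to a subsequence along which the $\liminf$ is attained as a limit.

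\emph{Step 1: Skorokhod representation.} Since $\bOmega_\bbG\times\bOmega_\bbG$ is Polish and $\blambda^n\rightharpoonup\blambda$, Skorokhod's representation theorem gives, on a common probability space $(\wt\Omega,\wt\F,\wt\P)$, random variables $(\bX^n,\bY^n)\sim\blambda^n$ and $(\bX,\bY)\sim\blambda$ with $(\bX^n,\bY^n)\to(\bX,\bY)$ $\wt\P$-almost surely in the uniform topology. Hence
\[
\mathrm{I}_n(\blambda^n)=\wt\E[C_n^2(\bX^n,\bY^n)] \quad\text{and}\quad \mathrm{I}_\infty(\blambda)=\wt\E[C_\cH^2(\bX,\bY)].
\]

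\emph{Step 2: pointwise liminf and Fatou.} For almost every realisation, \Cref{prop:liminf} applied to the convergent sequence $(\bX^n(\varpi),\bY^n(\varpi))$ yields
\[
\liminf_{n} C_n(\bX^n,\bY^n)\ge C_\cH(\bX,\bY),
\]
and squaring preserves this since the costs are nonnegative. Because $C_n^2\ge0$, Fatou's lemma then gives
\[
\liminf_n \wt\E[C_n^2(\bX^n,\bY^n)]\ge \wt\E\bigl[\liminf_n C_n^2(\bX^n,\bY^n)\bigr]\ge \wt\E[C_\cH^2(\bX,\bY)],
\]
which is the claim.

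\emph{Step 3: measurability.} We check that the integrands are measurable. $C_n$ is continuous, since it is built from finitely many evaluations and $d_\cc$. $C_\cH$ is lower semicontinuous by \Cref{lem:cost-mb}, hence Borel. This step is the only potential obstacle, and it is a mild one.

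\emph{Alternative route.} If one prefers to avoid Skorokhod, one could instead use the lower semicontinuous envelopes $G_m\coloneqq\inf_{n\ge m}C_n^2$ and apply the Portmanteau theorem. That route, however, requires a lower semicontinuity or uniform argument, which the Skorokhod approach sidesteps. I would therefore present the Skorokhod–Fatou argument.
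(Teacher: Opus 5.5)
Your proposal is correct and follows the paper's proof essentially verbatim. Both use Skorokhod representation on the Polish space $\bOmega_\bbG\times\bOmega_\bbG$, then the pointwise bound of \Cref{prop:liminf} along the almost surely convergent representatives, then Fatou's lemma. Your preliminary passage to a subsequence is superfluous, since Fatou already handles the $\liminf$, and is best dropped: otherwise you are applying \Cref{prop:liminf} to the costs $C_{n_k}$ along a subsequence, which it does not literally state (although in a metric space the $\Gamma$-liminf inequality does pass to subsequences).
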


\begin{proof}
    By Skorokhod's representation theorem, there exists a probability space $(\Xi,\mathfrak{A},\mathbb{P})$ and random variables $\bY^n \colon \Xi \to \bOmega_\bbG\times \bOmega_\bbG$ and $\bY \colon \Xi \to \bOmega_\bbG\times \bOmega_\bbG$ such that $\blambda^n = \bY^n_\#\mathbb{P}$, $\blambda = \bY_\#\mathbb{P}$, and
    $\bY^n\to \bY$ $\mathbb{P}$-almost surely.
    We conclude that
    \begin{align*}
        \liminf_{n\to\infty} \int_{\bOmega_\bbG\times \bOmega_\bbG} C_n(\bomega,\overline\bomega)^2\di \blambda^n & = \liminf_{n\to\infty} \int_{\Xi} C_n(\bY^n)^2\di \mathbb{P}\\
        & \geq \int_\Xi \liminf_{n\to \infty} C_n(\bY^n)^2\di\mathbb{P} \geq \int_\Xi C_\cH(\bY)^2\di\mathbb{P} = \int_{\bOmega_\bbG\times \bOmega_\bbG} C_\cH(\bomega,\overline\bomega)^2\di \blambda,
    \end{align*}
    by Fatou's lemma and \Cref{prop:liminf}.
\end{proof}

\begin{remark}
	Given \Cref{prop:liminfTransProb}, we find a much more direct proof of \Cref{thm:t2-carnot-path}. Indeed, combining \Cref{lem:rel-entropy-bound} and \Cref{prop:liminfTransProb} yields the result immediately.
\end{remark}

\begin{proposition}
\label{prop:limsupTransProb}
Let $\blambda\in\Pc(\bOmega_\bbG\times \bOmega_\bbG)$. Then there exists a sequence
$(\wt \blambda^n) \subset \Pc(\bOmega_\bbG\times \bOmega_\bbG)$ such that $\wt\blambda^n\rightharpoonup\blambda$ and
\begin{equation}\label{eq:limsupPlan}
\limsup_{n\to\infty}\mathrm{I}_n(\wt\blambda^n)
\leq \mathrm{I}_\infty(\blambda).
\end{equation}
\end{proposition}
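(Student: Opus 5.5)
We build the recovery plans by pushing $\blambda$ forward along the recovery map $\Phi^n$ from \Cref{rem:recoveryMap}. If $\mathrm{I}_\infty(\blambda)=\infty$ there is nothing to prove, so take $\wt\blambda^n=\blambda$. Otherwise $C_\cH<\infty$ holds $\blambda$-a.e., and hence $\blambda$ is concentrated on the set
\[
E \coloneqq \{\,(\bomega,\wt\bomega) : \wt\bomega = T_h\bomega,\ h = \pi_1\wt\bomega-\pi_1\bomega\in\cH\,\}.
\]

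On $E$ we define $\Phi^n(\bomega,\wt\bomega) \coloneqq (\bomega,\wt\bomega^n)$. Here $\wt\bomega^n_t = (T_h\bomega)_t(\bvtheta^n_t)^{-1}$ is the construction from the proof of \Cref{prop:limsup} with the constant sequence $\bomega^n=\bomega$. Off $E$ we set $\Phi^n$ equal to the identity. We then put $\wt\blambda^n \coloneqq \Phi^n_\sharp\blambda$.

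Three facts are needed.
\begin{enumerate}
\item \emph{Measurability of $\Phi^n$.} The set $E$ is Borel: it is the finiteness set of the lower semicontinuous function $C_\cH$ (\Cref{lem:cost-mb}). On $E$, the path $h=\pi_1\wt\bomega-\pi_1\bomega$ depends measurably on the pair. The increments $\theta^n$ are Riemann--Stieltjes integrals $\int \SCO h_{s,r}\otimes \D\omega_r$, which after integration by parts are continuous in $(h,\omega)$ for the uniform topology, with $h$ ranging over $\AC$. So $\Phi^n$ is Borel.
\item \emph{Convergence $\Phi^n\to\id$ pointwise on $E$.} This is the statement $\wt\bomega^n\to\wt\bomega$ proved in \Cref{prop:limsup}. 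Pointwise convergence gives $\Phi^n\to\id$ $\blambda$-a.s., and dominated convergence against bounded continuous test functions then yields $\wt\blambda^n\rightharpoonup\blambda$.
\item \emph{The cost bound.} On $E$, the computation in \Cref{prop:limsup} gives
\[
C_n(\Phi^n(\bomega,\wt\bomega)) = C_n(0,\Psi(h)).
\]
By \Cref{lem:lifted-CM-path} this is at most $\|h\|_\cH = C_\cH(\bomega,\wt\bomega)$. Hence $\mathrm{I}_n(\wt\blambda^n)\le \mathrm{I}_\infty(\blambda)$ for every $n$, and \eqref{eq:limsupPlan} follows without any limit argument, since the bound is uniform in $n$. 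Off $E$ nothing needs to be controlled because that set is $\blambda$-null.
\end{enumerate}

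The main obstacle is the measurability in the first fact. In particular one has to check that $h$, viewed as an $\AC$-valued (or $\cH$-valued) function of the pair, is Borel on $E$. For this I would use the following argument. The map $(\bomega,\wt\bomega)\mapsto \pi_1\wt\bomega-\pi_1\bomega$ is continuous into $C$. The set $\cH$ is a Borel subset of $C$, and its embedding into $C$ is a continuous injection, so its inverse is Borel by Lusin--Souslin. Composing with the continuity of the map in \Cref{prop:shift}(iv), which we adapt to $(h,\omega)\mapsto\theta^n$, gives measurability of $\Phi^n$.

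If desired, one can also note that the first marginal of $\wt\blambda^n$ is unchanged. The second marginal does change; this is consistent with the remark preceding \Cref{prop:liminfTransProb}, which says the recovery is along perturbed measures $\wt\bnu^n$.
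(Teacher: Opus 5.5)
Your proposal is correct and uses the paper's construction: $\wt\blambda^n = \Phi^n_\sharp\blambda$ with the recovery map from \Cref{rem:recoveryMap}, together with the bound $C_n(\Phi^n(\bomega,\wt\bomega)) = C_n(0,\Psi(h)) \le \|h\|_\cH$ from \Cref{lem:lifted-CM-path}. The differences are minor improvements: you integrate this bound directly to get $\mathrm{I}_n(\wt\blambda^n)\le \mathrm{I}_\infty(\blambda)$ for every $n$, whereas the paper goes through the pointwise limit \eqref{eq:recovery-map-convergence} (invoking \Cref{prop:liminf}) and a reverse Fatou argument that this bound already makes unnecessary; you also define $\Phi^n$ off the full-measure set and prove its Borel measurability, both of which the paper leaves implicit.
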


\begin{proof}We may assume that the right-hand side in \eqref{eq:limsupPlan} is finite as the inequality is trivially true otherwise.
In particular, we have $(\bomega,\overline\bomega)\mapsto C_\cH^2(\bomega,\overline\bomega)\in L^1(\blambda)$
and, for $\blambda$-almost every $(\bomega,\overline\bomega)\in \bOmega_\bbG\times \bOmega_\bbG$, we have that $\overline \bomega = T_h\bomega$
for $h=\pi_1(\ol\bomega^{-1}\bomega)\in\cH$.

Define $\wt\blambda^n = \Phi^n_\#\blambda\in \Pc(\bOmega_\bbG\times \bOmega_\bbG)$,
where $\Phi^n \colon \bOmega_\bbG\times \bOmega_\bbG\to \bOmega_\bbG\times \bOmega_\bbG$ maps $(\bomega,\wt\bomega)$ to  $(\bomega, \wt\bomega^n)$ as in \Cref{rem:recoveryMap}.
Then, for any $(\bomega, \wt \bomega) \in \bOmega_\bbG \times \bOmega_\bbG$, we have $\Phi^n(\bomega,\wt\bomega) \to (\bomega,\wt\bomega)$ and $\wt\blambda^n\rightharpoonup \blambda$ as $n \to \infty$. By \Cref{prop:limsup,prop:liminf},
\begin{equation}\label{eq:recovery-map-convergence}
	\lim_{n \to \infty}C_n(\Phi^n(\bomega,\wt\bomega)) = C_\cH(\bomega,\wt\bomega).
\end{equation}

Moreover, by \Cref{lem:lifted-CM-path}, 
we have $C_n(\bomega, \wt\bomega^n)\leq \|h\|_{\cH}=C_\cH(\bomega,\wt\bomega)$.
Using Fatou's lemma with integrable upper bound $C_\cH^2(\bomega,\wt\bomega)$ gives
\[
    \limsup_{n\to\infty}\int_{\bOmega_\bbG\times \bOmega_\bbG} C_n^2(\bomega,\wt\bomega)\di \wt\blambda^n
    = \limsup_{n\to\infty} \int_{\bOmega_\bbG\times \bOmega_\bbG} C_n^2(\Phi^n(\bomega,\wt\bomega))\di \blambda
    \leq \int_{\bOmega_\bbG\times \bOmega_\bbG} \limsup_{n\to\infty} C_n^2(\Phi^n(\bomega,\wt\bomega))\di\blambda.
\]
The assertion now follows from \eqref{eq:recovery-map-convergence}.
\end{proof}

\begin{corollary}
On $\Pc(\bOmega_\bbG \times \bOmega_\bbG)$ equipped with the weak topology, we have the $\Gamma$-convergence $\mathrm I_n \xrightarrow{\Gamma} \mathrm I_\infty$.
\end{corollary}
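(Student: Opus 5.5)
The corollary is the standard definition of $\Gamma$-convergence checked on the metric space $\Pc(\bOmega_\bbG\times\bOmega_\bbG)$ with the weak topology. Both conditions have essentially been proved in the two preceding propositions, so the plan is simply to assemble them.

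First, note that $\bOmega_\bbG\times\bOmega_\bbG$ is Polish, since $(\bOmega_\bbG,d_\infty)$ is Polish. Hence the weak topology on $\Pc(\bOmega_\bbG\times\bOmega_\bbG)$ is metrisable, for example by the L\'evy--Prokhorov metric. This makes the sequential definition of $\Gamma$-convergence recalled at the start of \cref{sec:gamma-conv} applicable with $E=\Pc(\bOmega_\bbG\times\bOmega_\bbG)$, $F_n=\mathrm I_n$ and $F_\infty=\mathrm I_\infty$.

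For the liminf condition (i), take any sequence $\blambda^n\rightharpoonup\blambda$. \Cref{prop:liminfTransProb} gives $\liminf_n \mathrm I_n(\blambda^n)\ge \mathrm I_\infty(\blambda)$ directly. That proof combines Skorokhod representation on the Polish space, Fatou's lemma, and the pointwise liminf inequality of \Cref{prop:liminf}.

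For the limsup condition (ii), take any $\blambda$. \Cref{prop:limsupTransProb} provides a recovery sequence $\wt\blambda^n=\Phi^n_\sharp\blambda\rightharpoonup\blambda$ with $\limsup_n\mathrm I_n(\wt\blambda^n)\le\mathrm I_\infty(\blambda)$. If $\mathrm I_\infty(\blambda)=\infty$, the constant sequence $\wt\blambda^n=\blambda$ works trivially.

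The only point needing care is this limsup step, specifically the measurability of the recovery map $\Phi^n$ used to push forward $\blambda$. I would check it from the explicit construction in \Cref{prop:limsup}: it uses $h=\pi_1\wt\bomega-\pi_1\bomega$ and Riemann--Stieltjes integrals against $h$, so it is continuous on the set where $h\in\cH$. That set is Borel and has full $\blambda$-measure whenever $\mathrm I_\infty(\blambda)<\infty$. Off this set we may define $\Phi^n$ as the identity.

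Combining the two conditions yields $\mathrm I_n\xrightarrow{\Gamma}\mathrm I_\infty$.
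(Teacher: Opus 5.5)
Your proposal is correct and follows the paper's route exactly: the paper states the corollary without a separate proof, as the conjunction of the liminf inequality in \Cref{prop:liminfTransProb} and the recovery sequence $\wt\blambda^n=\Phi^n_\sharp\blambda$ of \Cref{prop:limsupTransProb}. Your extra measurability check, which the paper omits, needs one correction: $\Phi^n$ is not continuous on $\{h\in\cH\}$ for the uniform topology, because it involves $\int h\otimes\D\omega$ and the area of $h$, and these are unstable when $h_k\to h$ uniformly with $\|h_k\|_\cH\to\infty$; it is, however, continuous on each closed set $\{\|h\|_\cH\le R\}$, and taking the union over $R\in\N$ still gives Borel measurability.
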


The following theorem is a version of the fundamental theorem of $\Gamma$-convergence in the present case.

\begin{theorem}\label{thm:GammaConvTransport}
	Let $\etab \in \Pc(\bOmega_\bbG)$. Then $\T_{C_n, 2}(\etab, \cdot) \xrightarrow{\Gamma} \T_{C_\cH, 2}(\etab, \cdot)$ with respect to the weak topology on $\Pc(\bOmega_\bbG)$. That is
	\begin{enumerate}[label = (\roman*)]
		\item For any $\bnu \in \Pc(\bOmega_\bbG)$ and any $(\bnu^n)\subset \Pc(\bOmega_\bbG)$ such that $\bnu^n \rightharpoonup \bnu$, $\liminf_{n \to \infty}\T_{C_n, 2}(\etab, \bnu^n) \geq \T_{C_\cH, 2}(\etab, \bnu)$; and
		\item For any $\bnu \in \Pc(\bOmega_\bbG)$, there exists a sequence $(\wt\bnu^n)\subset \Pc(\bOmega_\bbG)$ such that $\lim_{n\to\infty}\T_{C_n, 2}(\etab, \wt\bnu^n) = \T_{C_\cH, 2}(\etab, \bnu)$.
	\end{enumerate}
\end{theorem}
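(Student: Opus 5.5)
We derive both parts from the $\Gamma$-convergence $\mathrm I_n \xrightarrow{\Gamma} \mathrm I_\infty$ of the plan functionals (\Cref{prop:liminfTransProb,prop:limsupTransProb}), together with tightness of couplings with fixed marginals.

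For (i), let $\bnu^n \rightharpoonup \bnu$. We may pass to a subsequence along which $\T_{C_n,2}(\etab,\bnu^n)$ converges to the $\liminf$, and assume this limit is finite. Since $C_n$ is continuous (hence lower semicontinuous) and nonnegative, optimal plans $\blambda^n \in \Pi(\etab,\bnu^n)$ exist by \cite[Theorem 4.1]{Vi09}. Otherwise we simply take $\varepsilon$-optimal plans with $\varepsilon = 1/n$, which suffices.

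The family $\{\bnu^n\}$ is weakly convergent, hence tight by Prokhorov, since $\bOmega_\bbG$ is Polish. Together with $\etab$ being a single measure, this makes $\bigcup_n \Pi(\etab,\bnu^n)$ tight. So a further subsequence $\blambda^{n_k} \rightharpoonup \blambda$, and continuity of the marginal projections gives $\blambda \in \Pi(\etab,\bnu)$.

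One subtlety is that \Cref{prop:liminfTransProb} is stated for the full sequence $\mathrm I_n$. Its proof works verbatim along any subsequence $n_k$, because \Cref{prop:liminf} does: for the Skorokhod representation one needs $\liminf_k C_{n_k}(\bomega^{k},\wt\bomega^k) \ge C_\cH(\bomega,\wt\bomega)$ whenever $(\bomega^k,\wt\bomega^k)\to(\bomega,\wt\bomega)$. To get this, I would either check that the three cases in that proof only use the estimates along the given indices, or pad the subsequence into a full sequence by inserting arbitrary convergent pairs. Then
\[
\liminf_n \T_{C_n,2}^2(\etab,\bnu^n) = \lim_k \mathrm I_{n_k}(\blambda^{n_k}) \ge \mathrm I_\infty(\blambda) \ge \T_{C_\cH,2}^2(\etab,\bnu).
\]

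For (ii), we may assume $\T_{C_\cH,2}(\etab,\bnu)<\infty$; otherwise we take $\wt\bnu^n=\bnu$ and apply (i). Let $\blambda \in \Pi(\etab,\bnu)$ be optimal for $C_\cH$. It exists because $C_\cH$ is lower semicontinuous by \Cref{lem:cost-mb}. We use the recovery map from \Cref{rem:recoveryMap}: $\wt\blambda^n = \Phi^n_\sharp\blambda$ with $\Phi^n(\bomega,\wt\bomega) = (\bomega,\wt\bomega^n)$. Since $\Phi^n$ leaves the first component unchanged, $\wt\blambda^n$ has first marginal exactly $\etab$. This is the key point that makes the construction admissible.

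Set $\wt\bnu^n$ to be the second marginal of $\wt\blambda^n$. Then $\wt\bnu^n \rightharpoonup \bnu$, because $\wt\blambda^n \rightharpoonup \blambda$. By \Cref{prop:limsupTransProb},
\[
\limsup_n \T_{C_n,2}^2(\etab,\wt\bnu^n) \le \limsup_n \mathrm I_n(\wt\blambda^n) \le \mathrm I_\infty(\blambda) = \T_{C_\cH,2}^2(\etab,\bnu).
\]
Combining this with part (i), applied to $\wt\bnu^n\rightharpoonup\bnu$, gives the matching $\liminf$ bound and hence equality of the limit.

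The main obstacle is measurability of $\Phi^n$, which is needed for the pushforward to be well defined. On the relevant set where $\wt\bomega = T_h\bomega$, the map is $\wt\bomega^n = T_h\bomega \,(\bvtheta^n)^{-1}$ with $h = \pi_1\wt\bomega - \pi_1\bomega$. Its dependence on $h$ through Riemann–Stieltjes integrals is continuous on $\{h\in\cH\}$, and that set is Borel. Off that set, which is $\blambda$-null, we define $\Phi^n$ as the identity. The remaining ingredient, the subsequence version of the $\liminf$ inequality, is routine.
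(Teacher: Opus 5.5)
Your proposal is correct and follows the same route as the paper. For (i) it extracts a weakly convergent subsequence of (near-)optimal plans in $\Pi(\etab,\bnu^n)$ using tightness, then applies the $\liminf$ inequality for $\mathrm I_n$; for (ii) it pushes an optimal $C_\cH$-plan forward under the recovery map $\Phi^n$, which fixes the first marginal, and invokes (i) for the matching lower bound. Your additional care---padding subsequences so the pointwise $\liminf$ inequality applies, treating $\T_{C_\cH,2}(\etab,\bnu)=\infty$ separately, and checking Borel measurability of $\Phi^n$ on the $\blambda$-full set $\{C_\cH<\infty\}$---fills in details the paper leaves implicit.
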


\begin{proof}
(i) Let $\bnu \in \Pc(\bOmega_\bbG)$ and $(\bnu^n)\subset \Pc(\Omega_\bbG)$ such that $\bnu^n \rightharpoonup \bnu$, and let $(\blambda^n) \subset \Pc(\bOmega_\bbG\times \bOmega_\bbG)$ be a sequence of optimal transport plans for $\T_{C_n, 2}(\etab, \wt\bnu^n)$. We can assume that $\blambda^n$ converges weakly to a limit $\blambda\in\Pc(\bOmega_\bbG\times \bOmega_\bbG)$ since its marginals are tight by Prokhorov's theorem; see \cite[Lemma 5.2.2]{AGS}. 
The limit $\blambda$ has marginals $\etab$ and $\bnu$ and is hence an admissible transport plan for $\T_{C_\cH, 2}(\etab, \bnu)$. Using Proposition \ref{prop:liminfTransProb}, we get the chain of inequalities
\[
	\T_{C_\cH, 2}(\etab, \bnu) \leq \int_{\bOmega_\bbG\times \bOmega_\bbG} C_\cH^2(\bomega, \ol\bomega) \di\blambda
	\leq \liminf_{n\to\infty}\int_{\bOmega_\bbG\times \bOmega_\bbG} C_n^2 (\bomega, \ol\bomega)\di\blambda^n  = \liminf_{n\to\infty}\T_{C_n, 2}(\etab, \wt\bnu^n).
\]

(ii) Now let $\blambda\in\Pc(\bOmega_\bbG\times \bOmega_\bbG)$ be an optimal transport plan for $\T_{C_\cH, 2}(\etab, \bnu)$ (note that $C_\cH$ is lower semi-continuous; see \cref{lem:cost-mb}). 
Let the sequence of transport plans $(\wt\blambda^n) \subset \Pc(\bOmega_\bbG \times \bOmega_\bbG)$ be given as in \Cref{prop:limsupTransProb}, and define $\wt\bnu^n$ as the second marginal of $\wt\blambda^n$, for $n \in \N$. The first marginal of $\wt\blambda^n$ is fixed to $\etab$ for all $n \in \N$.
Thus $\wt\bnu^n\rightharpoonup \bnu$. Moreover, by \cref{prop:limsupTransProb} and the optimality of $\blambda$,
\[
	\limsup_{n\to\infty} \int_{\bOmega_\bbG\times \bOmega_\bbG} C_n^2(\bomega,\overline\bomega)\di \wt\blambda^n \leq \T_{C_\cH, 2}^2(\etab, \bnu).
\]
On the other hand, if $(\ol\blambda^n) \subset \Pc(\bOmega_\bbG\times \bOmega_\bbG)$ is a sequence of 
optimal transport plans for $\T_{C_n, 2}(\etab, \wt\bnu^n)$, we can assume that $\ol\blambda^n\rightharpoonup \ol\blambda$, where the limit  
$\ol\blambda$ has marginals $\etab$ and $\bnu$. 
By Proposition \ref{prop:liminfTransProb}, we get
\[
	\T_{C_\cH, 2}^2(\etab, \bnu)\leq 
	\int_{\bOmega_\bbG\times \bOmega_\bbG} C_\cH^2(\bomega,\overline\bomega)\di \ol\blambda
	\leq\liminf_{n\to\infty}\T_{C_n, 2}^2(\etab, \wt\bnu^n).
\]
Combining both estimates proves the claim.
\end{proof}

\begin{remark}
In general, we cannot rule out that there exists a sequence $\bnu^n$ converging to some limit $\bnu$ such that $\lim_{n\to \infty} \T_{C_n,2}(\etab, \bnu^n) > \T_{C_\cH,2}(\etab, \bnu)$. The crucial point in Theorem~\ref{thm:GammaConvTransport} is that 
the sequence $\wt\bnu^n$ is a special sequence
constructed via the push-forward of the recovery map
$\Phi^n$. It is an interesting question whether the following stronger result holds: Let $\delta_{\Pi(\etab, \bnu^n)}$ denote the convex indicator function (taking values in $\{0,\infty\}$) for the set of admissible plans, i.e.\ $\delta_{\Pi(\etab, \bnu^n)}(\blambda) = 0$ if and only if $\blambda \in \Pi(\etab, \bnu^n)$,
and let $\bnu \in \Pc(\bOmega)$, $(\bnu^n) \subset \Pc(\bOmega_\bbG)$ such that $\bnu^n \rightharpoonup \bnu$ and $\sup_{n \in \N}H(\bnu^n \| \etab) < \infty$. Do we have the $\Gamma$-convergence $\mathrm{I}_n + \delta_{\Pi(\etab, \bnu^n)}\stackrel{\Gamma}
        {\rightarrow}\mathrm{I}_\infty + \delta_{\Pi(\etab,\bnu)}$?
        This property would imply that $\T_{C_n,2}(\etab, \bnu^n)\to \T_{C_\cH,2}(\etab, \bnu)$ for \textit{every} converging sequence $\bnu^n$ with finite relative entropy.
\end{remark}

\section{Beyond step-\texorpdfstring{$2$}{2} Carnot groups}\label{sec:outlook}

Parts of this work are valid in the generality of general Carnot groups (see, e.g.\ \cite{BoLaUg2007SLGP}). However, Carnot groups for which the log-Sobolev inequality is known are the Heisenberg group and more general H-type groups, which are examples of step-$2$ Carnot groups, as discussed in \cref{sec:carnot-step-2}. This explains our focus on step-$2$. Nevertheless, \cref{thm:gigli-ledoux} holds for general Carnot groups with no restriction on the step of the group, and the proof remains unchanged, given the appropriate definitions. Similarly, \cref{lem:t2-dilation}, \cref{prop:t2-tensorisation}, and \cref{lem:rel-entropy-bound} carry over without change to the general Carnot group setting. Finally, \cref{thm:t2-carnot-path} also holds for $\bbG = \mathbb F^{d_1, N}$, i.e.\ for step-$N$ free Carnot groups, under additional regularity assumptions for $N > 2$. On the space of $p$-variation paths, for any $p$ such that the shift by an absolutely continuous path is well defined, the proof of \cref{thm:t2-carnot-path} remains valid; see also \cref{rem:translation-regularity}.

\subsection*{Acknowledgement}
PF, HK, and ML acknowledge funding by the Deutsche Forschungsgemeinschaft (DFG, German Research Foundation) -- CRC/TRR 388 ``Rough Analysis, Stochastic Dynamics and Related Fields'' -- Project ID 516748464 within sub-project A02. This project also funded research stays of BR, during which part of this work was completed.
The research of VL has been partially funded by Deutsche Forschungsgemeinschaft (DFG, German Research Foundation) -- SFB1294/1-318763901.

\bibliographystyle{alpha}
\bibliography{references.bib}

\end{document}